\newtheorem{theorem}{Theorem}[section]
\newtheorem{lemma}[theorem]{Lemma}
\newtheorem{corollary}[theorem]{Corollary}
\newtheorem{definition}[theorem]{Definition}
\newtheorem{question}[theorem]{Question}
\newtheorem*{claim*}{Claim}
\newtheorem{claim}[theorem]{Claim}
\theoremstyle{remark}
\newcommand{\cH}{\mathcal{H}}
\newcommand{\cC}{\mathcal{C}}
\newcommand{\ep}{\varepsilon}
\newcommand{\bN}{\mathbb{N}}
\newcommand{\bP}{\mathbb{P}}
\newcommand{\cE}{\mathcal{E}}
\newcommand{\bb}[1]{\mathbb{#1}}
\newcommand{\ca}[1]{\mathcal{#1}}
\newcommand{\bc}{,\allowbreak}
\newcommand{\ff}[2]{\left\lfloor\frac{#1}{#2}\right\rfloor}
\newcommand{\mr}[1]{\mathrm{#1}}
\newcommand{\cf}[2]{\left\lceil\frac{#1}{#2}\right\rceil}
\title{Generalized Ramsey numbers via conflict-free hypergraph matchings}
\author{Andrew Lane\footnotemark[1]\thanks{Department of Mathematics and Statistics, University of Victoria, Canada. \\ Email: \texttt{\{andrewlane,nmorrison\}@uvic.ca}. } \thanks{Research supported by the Jamie Cassels Undergraduate Research Awards and Science Undergraduate Research Awards, University of Victoria.} 
    \and Natasha Morrison\footnotemark[1] \thanks{Research supported by NSERC Discovery Grant RGPIN-2021-02511 and NSERC Early Career Supplement DGECR-2021-00047.}
	}
\date{\today}
\begin{document}

\maketitle

\begin{abstract}
    Given graphs $G, H$ and an integer $q \ge 2$, the \emph{generalized Ramsey number}, denoted $r(G,H,q)$, is the minimum number of colours needed to edge-colour $G$ such that every copy of $H$ receives at least $q$ colours. In this paper, we prove that for a fixed integer $k \ge 3$, we have $r(K_n,C_k,3) = n/(k-2)+o(n)$. This generalizes work of Joos and Muybayi, who proved $r(K_n,C_4,3) = n/2+o(n)$. We also provide an upper bound on $r(K_{n,n}, C_k, 3)$, which generalizes a result of Joos and Mubayi that $r(K_{n,n},C_4,3) = 2n/3+o(n)$. Both of our results are in fact specific cases of more general theorems concerning families of cycles. 
\end{abstract}

\section{Introduction}
The classical Ramsey problem asks the question: given a graph $H$ and $k\ge 2$, what is the minimum number $n$ such that any edge-colouring of $K_n$ using $k$ colours contains a monochromatic copy of $H$? This fundamental question has been widely studied, along with many variants; see the excellent survey of Conlon, Fox and Sudakov~\cite{Ramsey}.
A very natural generalization of the classical Ramsey problem is to vary the number of colours $k$ instead of the number of vertices $n$.
For graphs $G,H$ and $q\ge 2$, let $r(G,H,q)$ be the minimum number of colours needed to colour $G$ such that every copy of $H$ receives at least $q$ colours.
A colouring of $G$ in which every copy of $H$ receives at least $q$ colours is called a \emph{$(H,q)$-colouring} of $G$.
The function $r(G,H,q)$ is called the \emph{generalized Ramsey number} or the \emph{Erd\H{o}s-Gy\'arf\'as function}.

Study of the function $r(G,H,q)$ was initiated by Erd\H{o}s and Shelah \cite{Erdos} in 1975. In 1997, Erd\H{o}s and Gy\'{a}rf\'{a}s \cite{ErdosGyarfas} considered the case where $G$ and $H$ are cliques and proved $r(K_n,K_p,q) = O\left(n^{(p-2)/(\binom{p}{2}-q+1)}\right)$. This was generalized by Axenovich, F\"{u}redi, and Mubayi \cite{Axenovich} to give a general upper bound of $r(G,H,q) = O\left(n^{\frac{|V(H)|-2}{|E(H)|-q+1}}\right)$. Bennett, Delcourt, Li, and Postle \cite{bennett2022generalized} improved and generalized this bound.

\begin{theorem}[Bennett, Delcourt, Li, Postle \cite{bennett2022generalized}]\label{thm:bdlp}
Let $H$ be a fixed graph with $|V(H)| \ge 3$ and let $q$ be a positive integer with $q \le |E(H)|$ and $|V(H)|-2$ not divisible by $|E(H)|-q+1$.
If $G$ is a graph on $n$ vertices, then
\[ r(G,H,q) = O\left(\left(\frac{n^{|V(H)|-2}}{\log{n}}\right)^{\frac{1}{|E(H)|-q+1}}\right). \]
\end{theorem}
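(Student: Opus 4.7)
I would prove the theorem via the semi-random (R\"odl nibble) method, augmented by the conflict-free hypergraph matching framework of Delcourt-Postle and of Glock-Joos-Kim-K\"uhn-Osthus. The logarithmic improvement over the earlier Axenovich-F\"uredi-Mubayi bound comes precisely from this framework, which delivers an extra $(\log \Delta)^{1/(r-1)}$ factor beyond the naive nibble when constructing an independent-like structure in a near-regular hypergraph of uniformity $r$.

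The plan is to reformulate the problem as finding a conflict-free almost-perfect matching in an auxiliary hypergraph on vertex set $E(G) \times [C]$, where $C$ is the target number of colours. A matching selecting exactly one pair $(e,c)$ per $e \in E(G)$ encodes a colouring $\chi\colon E(G) \to [C]$, and a separate conflict hypergraph records those collections of edge-colour pairs that, if all selected, would witness a copy of $H$ in $G$ with at most $q-1$ colours. For tractability I would first pass to a stronger sufficient condition amenable to nibble analysis, in which the conflict hyperedges are indexed by distinguished subsets of the edges of each $H$-copy that lie in a common colour class.

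The key quantitative input is the count of copies of $H$ through a fixed edge of $G$, which is $O(n^{|V(H)|-2})$, so the main hypergraph (and, after filtering by colour, the relevant conflict hypergraph) is approximately regular of that degree. Balancing the parameters of the conflict-free matching framework produces $C$ satisfying $C^{|E(H)|-q+1} \asymp n^{|V(H)|-2}/\log n$, which is exactly the claimed bound.

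The main technical obstacle is the verification of the codegree conditions required by the conflict-free matching theorem. Overlapping copies of $H$ in $G$ can a priori create codegree terms on the order of the main degree, which would defeat the nibble. The hypothesis that $|V(H)|-2$ is not divisible by $|E(H)|-q+1$ rules out precisely those "extremal" arithmetic alignments of $H$-copies at which some codegree would match the degree; absent this divisibility one obtains a strict polynomial gap between degree and codegree across all overlap sizes, and the conflict-free matching theorem then yields the desired colouring.
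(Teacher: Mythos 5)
You should first note that the paper does not prove Theorem~\ref{thm:bdlp} at all: it is quoted from \cite{bennett2022generalized}, and the authors only remark that it follows from a stronger list-colouring result there, proved by the conflict-free/forbidden-submatching method. So there is no in-paper argument to compare against; the comparison can only be with the cited work, and at that level your outline does point at the right machinery (an auxiliary hypergraph encoding edge-colour assignments, a conflict system recording copies of $H$ with at most $q-1$ colours, degree/codegree verification, and the non-integrality of $(|V(H)|-2)/(|E(H)|-q+1)$ supplying polynomial slack).

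As a proof, however, the proposal has genuine gaps. First, the logarithmic gain is asserted, not obtained: the conflict-free matching theorem in the form used in this paper (Theorem~\ref{thm:GlockConflictFree}) produces an almost-perfect conflict-free matching and carries no built-in $(\log)^{1/(r-1)}$ improvement; the naive balancing ``expected conflicts per edge-colour pair $\asymp 1$'' reproduces only the Axenovich--F\"uredi--Mubayi bound of \cite{Axenovich}, and extracting the extra $\log n$ is exactly where Bennett--Delcourt--Li--Postle need their stronger forbidden-submatching/list-colouring machinery \cite{Delcourt,bennett2022generalized}. Saying ``the framework delivers the log factor'' hides the entire difficulty. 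Second, a conflict-free matching is only almost perfect: a $d^{-\varepsilon^3}$ proportion of the edges of $G$ (polynomially many, far more than the number of colours) remain uncoloured, and you give no mechanism for completing the colouring without creating low-colour copies of $H$; in the present paper this completion is a separate step (a random colouring of the leftover graph analysed with the Lov\'asz Local Lemma), and it cannot be skipped. Third, the codegree verification and the role of the divisibility hypothesis are only gestured at: the obstruction is not ``overlapping copies of $H$'' per se but the arithmetic of quantities of the form $n^{a}/C^{b}$ against $d^{i-j-\varepsilon}$ for subconfigurations of a single conflict, and one must actually check that non-integrality of the exponent yields uniform polynomial slack (and handle the fact that $G$ is an arbitrary $n$-vertex graph, so the auxiliary structure need not be close to regular). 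Until those three points are carried out, the sketch does not establish the stated bound.
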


In fact, Theorem~\ref{thm:bdlp} is a consequence of a much stronger result in \cite{bennett2022generalized} concerning a list-colouring analogue. Precise results for $r(G,H,q)$ for specific $H$ have also been obtained. In particular, Joos and Mubayi \cite{joos2022ramsey} proved that $r(K_{n,n},C_4,3) = 2n/3+o(n)$, $r(K_n,K_4,5) = 5n/6+o(n)$, and $r(K_n,C_4,3) = n/2+o(n)$.

Our main result provides a generalization to all cycles of the work of Joos and Mubayi \cite{joos2022ramsey}.
This stronger result implies both theorems and gives a full characterization of the asymptotic behaviour of the number $r(K_n,C_k,3)$ for fixed $k$.
To state this result in full generality, we define the following: for a graph $G$, a family of graphs $\ca{F}$, and $q \ge 2$, an $(\ca{F},q)$-colouring of $G$ is a colouring of $G$ that is an $(F,q)$-colouring of $G$ for all $F \in \ca{F}$. 
Let $r(G,\ca{F},q)$ be the minimum number of colours in an $(\ca{F},q)$-colouring of $G$.
Let $C_{[k,\ell]} := \{C_k,C_{k+1},\ldots,C_\ell\}$.

\begin{restatable}{theorem}{cycleramsey} \label{thm:r(Kn,Cp,3)}
For fixed integers $3\le k \le \ell$,
\[r(K_n,C_{[k,\ell]},3) = n/(k-2)+o(n).\]
\end{restatable}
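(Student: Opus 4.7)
For the lower bound, the key observation is that each colour class $G_i$ of a $(C_{[k,\ell]},3)$-colouring of $K_n$ must be $P_k$-free (contain no path on $k$ vertices). Indeed, if $G_i$ contained a path $v_0 v_1 \cdots v_{k-1}$, then the edge $v_0 v_{k-1}$ has some colour $c'$, and together with the path it forms a $C_k$ using only the colours $\{i,c'\}$, contradicting the $(C_k,3)$-colouring property. By the Erd\H{o}s--Gallai theorem, $|E(G_i)| \le (k-2)n/2$, and summing over the $m$ colour classes yields $\binom{n}{2} \le m(k-2)n/2$, whence $m \ge (n-1)/(k-2) = n/(k-2) - o(n)$.

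For the upper bound I would apply the conflict-free hypergraph matching method. The extremal graphs for the Erd\H{o}s--Gallai bound are disjoint unions of copies of $K_{k-1}$, and since $m\cdot(k-2)n/2 \approx \binom{n}{2}$ when $m \approx n/(k-2)$, the natural target is a colouring whose classes are close to $K_{k-1}$-factors. Setting $m := \cf{n-1}{k-2}$, define an auxiliary hypergraph $\cH$ on the vertex set $E(K_n) \sqcup (V(K_n)\times[m])$ by adding, for each copy $T \cong K_{k-1}$ in $K_n$ and each colour $c \in [m]$, the hyperedge $E(T) \cup (V(T)\times\{c\})$. A matching in $\cH$ is a family of edge-disjoint $K_{k-1}$-copies labelled by colours, with same-coloured copies vertex-disjoint; a near-perfect matching therefore yields a near-decomposition of $E(K_n)$ into near-$K_{k-1}$-factors indexed by $[m]$. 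A direct count shows that both types of vertex of $\cH$ have degree $\binom{n-2}{k-3}\cdot m = \binom{n-1}{k-2}$ (on the order of $n^{k-2}$) with codegrees of strictly smaller order, placing $\cH$ in the regime of standard matching theorems.

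The constraints that no $C_j$ with $j\in[k,\ell]$ receives fewer than three colours are encoded by a conflict hypergraph $\cC$ on $E(\cH)$: a conflict is a set $\{(T_1,c_1),\ldots,(T_r,c_r)\}$ whose $K_{k-1}$-copies together cover the edges of some $C_j$ and whose labels $c_i$ take at most two distinct values. Every conflict has $r \le \ell$, so conflict sizes are bounded independent of $n$. A case analysis organised by the intersection pattern of $E(C_j)$ with the $T_i$ bounds, for any fixed hyperedge of $\cH$, the number of conflicts through it; I expect this count to be smaller than the degree of $\cH$ by at least a factor of $n$, which suffices to invoke a conflict-free matching theorem in the style of Delcourt--Postle or the refinement of Kang, Kelly, K\"uhn, Methuku and Osthus. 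The resulting matching yields a proper colouring of all but $o(n^2)$ edges of $K_n$ satisfying the $(C_{[k,\ell]},3)$-property, and the leftover edges can be coloured using a further $o(n)$ colours chosen greedily so as not to complete any 2-coloured $C_j$.

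The main technical obstacle is the conflict bookkeeping: one must enumerate the ways the edges of a $C_j$ can be distributed among $r \le \ell$ copies of $K_{k-1}$ and, for each such configuration, count how many of these pass through any fixed hyperedge of $\cH$, verifying the codegree-type hypotheses of the matching theorem for every resulting class. A secondary difficulty is the clean-up phase: the $o(n^2)$ uncovered edges together with the new $o(n)$ auxiliary colours must be integrated without creating any 2-coloured $C_j$, which likely requires reserving a small pool of colours, and maintaining control over which edges remain uncovered, throughout the main matching step.
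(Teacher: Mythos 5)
Your lower bound is correct and is essentially the paper's argument (the paper cites Theorem~\ref{thm:AxenovichGenLower} with the Faudree--Schelp bound on $\mathrm{ex}(n,P_k)$; your direct $P_k$-free-colour-class argument via Erd\H{o}s--Gallai is the same computation). Your upper-bound skeleton -- the hypergraph $\cH$ on $E(K_n)\cup(V\times[m])$ whose edges are colour-labelled copies of $K_{k-1}$, with conflicts encoding $2$-coloured cycles of bounded size -- is also the paper's construction.

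The genuine gap is in what you dismiss as the ``secondary difficulty'': the clean-up phase, and in particular the claim that the $o(n^2)$ uncovered edges ``can be coloured using a further $o(n)$ colours chosen greedily so as not to complete any 2-coloured $C_j$.'' As stated this step can fail, and repairing it is where the real work of the paper lies. Consider a single leftover edge $uv$ and configurations consisting of two vertex-disjoint cliques $F_1\ni u$, $F_2\ni v$ of the same old colour together with one other leftover edge $f$ joining $F_1\setminus\{u\}$ to $F_2\setminus\{v\}$: colouring $uv$ with the colour of $f$ closes a $2$-coloured cycle of length between $k$ and $2k-2$. A priori there is one such configuration for each of up to $n/(k-2)$ old colours, so up to $\Theta(n)$ colours can be forbidden at $uv$, which swamps any pool of $o(n)$ fresh colours; nothing in a bare near-perfect conflict-free matching rules this out. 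The missing idea is that the matching itself must be produced with quasirandomness guarantees: the paper attaches trackable test functions (the $w_v$, $w_{u,v,m}$ and $w'_{u,v,m}$ in the proof of Lemma~\ref{lem:Cpcolouring}) to Theorem~\ref{thm:GlockConflictFree} so that the leftover graph $L$ has maximum degree $n^{1-\delta}$ (property~(III)) \emph{and} the number of partially-closed alternating configurations with $m$ leftover edges through any fixed pair is at most $n^{(m-1)(1-\delta)}$ (property~(IV)); the upper bound on these counts comes from comparing an upper bound for $w_{u,v,m}(\ca{M})$ with a lower bound for $w'_{u,v,m}(\ca{M})$. Only with these bounds does a random colouring of $L$ with $n^{1-\alpha}$ fresh colours ($\alpha<\delta/4$), analysed via the asymmetric local lemma, avoid all completions of $2$-coloured cycles (including those with several leftover edges and those lying entirely in $L$, where a greedy scheme would additionally need to control how many already-$2$-coloured near-cycles pass through each edge). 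Without property~(IV), or some equivalent control built into the matching step, your final step does not go through.
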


Note that if $\ca{F}' \subseteq \ca{F}$, then $r(G,\ca{F}'q) \le r(G,\ca{F},q)$. Using this, we  immediately obtain the following result.

\begin{corollary}
For a fixed integer $k \ge 3$, $r(K_n,C_k,3) = n/(k-2)+o(n)$.
\end{corollary}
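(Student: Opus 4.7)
The plan is to derive this corollary as an immediate specialization of Theorem \ref{thm:r(Kn,Cp,3)}. Since the theorem is stated for all integers $3 \le k \le \ell$, I would apply it in the degenerate case $\ell = k$, where the family $C_{[k,k]} = \{C_k\}$ collapses to a single graph. In this one-graph family, a $(C_{[k,k]},3)$-colouring is by definition exactly a $(C_k,3)$-colouring, so $r(K_n, C_{[k,k]}, 3) = r(K_n, C_k, 3)$. The theorem then yields $r(K_n, C_k, 3) = n/(k-2) + o(n)$ directly, with no additional work required.

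Alternatively, the two bounds can be split apart using the monotonicity remark stated just before the corollary. For the upper bound, the containment $\{C_k\} \subseteq C_{[k,\ell]}$ for any $\ell \ge k$ together with monotonicity and Theorem \ref{thm:r(Kn,Cp,3)} gives
\[ r(K_n, C_k, 3) \le r(K_n, C_{[k,\ell]}, 3) = n/(k-2) + o(n). \]
For the matching lower bound, one again takes $\ell = k$ in the theorem, which recovers exactly the single-cycle case and supplies the reverse inequality.

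There is no genuine obstacle here: the corollary is a syntactic specialization of the main theorem, and all substantive content lies in the proof of Theorem \ref{thm:r(Kn,Cp,3)} itself. This corollary merely records the $\ell = k$ instance for readers interested in the classical single-cycle formulation $r(K_n, C_k, 3)$.
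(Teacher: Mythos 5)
Your proposal is correct and matches the paper's derivation: the corollary is indeed an immediate specialization of Theorem \ref{thm:r(Kn,Cp,3)} (take $\ell=k$, or equivalently use the monotonicity remark $\{C_k\}\subseteq C_{[k,\ell]}$ for the upper bound and the $\ell=k$ case for the lower bound), with all substantive work residing in the theorem itself.
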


We also prove a bipartite analogue of Theorem \ref{thm:r(Kn,Cp,3)} that generalizes the bound of Joos and Mubayi \cite{joos2022ramsey} on $r(K_{n,n},C_4,3)$.
\begin{restatable}{theorem}{bipartitecycleramsey} \label{thm:r(Kn,n,Cp,3)} 
For any fixed integer $k \ge 2$ and even number $\ell \ge k^2-k+2$, we have 
\[r(K_{n,n},C_{[k^2-k+2,\ell]},3) \le \frac{2n}{k^2-1}+o(n).\]
\end{restatable}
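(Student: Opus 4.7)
The plan is to adapt the conflict-free hypergraph matching strategy used to establish Theorem~\ref{thm:r(Kn,Cp,3)} to the bipartite setting. The goal is to construct an auxiliary hypergraph $\cH$ whose vertex set is $E(K_{n,n})$ and whose hyperedges correspond to admissible colour classes; a near-perfect matching in $\cH$ that avoids certain ``conflict'' configurations then yields a $(C_{[k^2-k+2,\ell]},3)$-colouring of $K_{n,n}$ with roughly $2n/(k^2-1)$ colours.

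First, I would choose a template $T \subseteq K_{n,n}$ with $|E(T)| \approx (k^2-1)n/2$ edges, so that $|E(K_{n,n})|/|E(T)| \approx 2n/(k^2-1)$. A natural candidate is a near $(k-1,k+1)$-biregular bipartite graph, built from a balanced collection of (not necessarily vertex-disjoint) copies of $K_{k-1,k+1}$, arranged so that both parts of $K_{n,n}$ are used symmetrically (e.g., by interleaving the orientations of the constituent bicliques). The hyperedges of $\cH$ are then the edge sets of all copies of $T$ in $K_{n,n}$, and a routine double-counting argument shows $\cH$ is nearly regular with small codegree.

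Second, I would specify the conflict system. A conflict is a tuple of templates whose union contains a 2-coloured cycle of length $j \in [k^2-k+2,\ell]$; the critical structural claim is that when templates are built from $K_{k-1,k+1}$-blocks, any such 2-coloured cycle of length at least $k^2-k+2$ forces several overlapping blocks across the two colours, so that the number of conflict tuples incident to a fixed edge is of strictly smaller order than the degree in $\cH$. The threshold $k^2-k+2$ should be forced by the extremal configuration: the shortest alternating cycle that can be threaded back and forth through two colour-classes composed of $K_{k-1,k+1}$-blocks has length exactly $k^2-k+2$, reflecting the fact that the longest path in a single $K_{k-1,k+1}$ has length $2(k-1)$ and that multiple blocks per colour must be combined to produce a cycle in the specified range.

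With the conflict system in place, the conflict-free hypergraph matching theorem, in the same form used earlier in the paper, produces an almost-perfect matching in $\cH$ that avoids all conflicts; this colours all but $o(n^2)$ edges of $K_{n,n}$ using $2n/(k^2-1)+o(n)$ colours such that every cycle of length in $[k^2-k+2,\ell]$ receives at least three colours. The remaining $o(n^2)$ edges are coloured using $o(n)$ extra colours by a simple greedy argument, preserving the asymptotic count. The principal obstacle is the extremal/structural analysis in the previous step: one must pin down $k^2-k+2$ as the precise cycle-length threshold beyond which 2-coloured cycles can be ruled out by the template structure, and then convert this into a codegree bound on conflicts tight enough for the matching theorem to apply. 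Once that structural input is established, the probabilistic machinery proceeds exactly as in the proof of Theorem~\ref{thm:r(Kn,Cp,3)}.
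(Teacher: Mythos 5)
Your high-level plan (structured colouring from a conflict-free matching, then finish the leftover edges) matches the paper, but the concrete construction has a gap that the argument cannot survive. The paper's colour classes are \emph{vertex-disjoint} copies of $K_{\binom{k}{2},\binom{k+1}{2}}$, and this is what produces the threshold: disjointness forces every monochromatic path or cycle to live inside a single block, and the longest path/cycle in $K_{\binom{k}{2},\binom{k+1}{2}}$ has exactly $2\binom{k}{2}=k^2-k$ edges, so no cycle of length in $[k^2-k+2,\ell]$ can be monochromatic, nor can it be formed from one long monochromatic path plus a single edge of a second colour. Your colour classes, built from \emph{overlapping} copies of $K_{k-1,k+1}$ so as to reach $(k^2-1)n/2$ edges, destroy exactly this: once same-coloured blocks overlap, monochromatic paths and cycles are no longer confined to a block, and a graph of that density will in general contain monochromatic cycles with length inside the window, which is fatal (one colour, not three). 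Your heuristic that the longest path in $K_{k-1,k+1}$ has length about $2(k-1)$ would only justify a threshold near $2k$, not $k^2-k+2$; the claim that alternating cycles through $K_{k-1,k+1}$-blocks first appear at length $k^2-k+2$ is unsupported and not true. Note also that the part sizes $\binom{k}{2},\binom{k+1}{2}$ are not a stylistic choice: they are consecutive triangular numbers, the solutions of $(b-a)^2=a+b$, which is precisely what makes the auxiliary hypergraph nearly regular over all three vertex types (cross pairs, same-side pairs, and vertex–colour pairs) so that Theorem~\ref{thm:GlockConflictFree} applies, and what yields exactly $2n/(k^2-1)$ colours; vertex-disjoint $K_{k-1,k+1}$-blocks would only give about $nk/(k^2-1)$ colours.

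There are two further structural problems. First, taking hyperedges of $\cH$ to be edge sets of whole templates with $\Theta(n)$ edges gives a hypergraph of uniformity $\Theta(n)$, so Theorem~\ref{thm:GlockConflictFree} (stated for fixed uniformity and fixed conflict size, with conflicts of size at least $3$) simply does not apply; conflicts between two templates would even have size $2$. The paper keeps the uniformity constant by letting a hyperedge be a single block $A$ together with the colour slots $A\times\{i\}$, so that a colour class is assembled from many hyperedges and ``same colour $\Rightarrow$ disjoint'' (indeed, sharing at most one vertex across colours) is enforced by the matching condition. Second, finishing with a ``simple greedy argument'' on the $o(n^2)$ leftover edges is not enough: the dangerous configurations are cycles in the window whose leftover edges are pairwise non-adjacent and all receive one new colour while the remaining edges lie in blocks of one old colour. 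Excluding these is exactly what requires the random colouring with $n^{1-\alpha}$ fresh colours, the Lov\'asz Local Lemma, and the quantitative conditions (III) and (IV) of Lemma~\ref{lem:bipartitecolouring}, which in turn are extracted from the matching theorem via the trackable test functions. Without a replacement for these ingredients, the proposal does not yield the stated bound.
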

\begin{restatable}{corollary}{bipartitecycleramseycor} \label{cor:r(Kn,n,Cp,3)}
For any fixed even number $k \ge 4$, 
\[\frac{n}{k-2}+o(n) \le r(K_{n,n},C_k,3) \le \frac{2n}{\ff{1+\sqrt{4k-7}}{2}^2-1} + o(n). \]
\end{restatable}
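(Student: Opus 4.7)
The plan has two parts. For the upper bound, I would invoke Theorem~\ref{thm:r(Kn,n,Cp,3)} with an optimised choice of its parameter $k$. For the lower bound, I would establish a short Erdős--Gallai-type argument on the monochromatic colour classes, using that $k$ is even so that the bipartite structure plays nicely with $C_k$.

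For the upper bound, let $m = \lfloor (1+\sqrt{4k-7})/2 \rfloor$. Solving the quadratic inequality $m^2 - m + 2 \le k$ shows that this is precisely the largest integer $m$ satisfying that constraint, and in particular $m \ge 2$ whenever $k \ge 4$. Since $k$ is even and $k \ge m^2 - m + 2$, Theorem~\ref{thm:r(Kn,n,Cp,3)} applied with its parameter $k$ replaced by $m$ and $\ell = k$ yields
\[ r(K_{n,n},\, C_{[m^2-m+2,\, k]},\, 3) \le \frac{2n}{m^2-1} + o(n). \]
Since $C_k \in C_{[m^2-m+2,\, k]}$, the monotonicity $r(G,\mathcal{F}',q) \le r(G,\mathcal{F},q)$ for $\mathcal{F}' \subseteq \mathcal{F}$ (noted in the excerpt just before the corollary) gives the claimed upper bound directly.

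For the lower bound, suppose $K_{n,n}$ admits a $(C_k,3)$-colouring using $t$ colours, and let $G_1,\ldots,G_t$ denote the colour classes. I claim each $G_i$ is $P_k$-free (contains no path on $k$ vertices). Suppose for contradiction that $G_i$ contained such a path $v_1 v_2 \cdots v_k$. Because $k$ is even, $v_1$ and $v_k$ lie in opposite parts of the bipartition of $K_{n,n}$, so $v_1 v_k$ is an edge of $K_{n,n}$. If $v_1 v_k \in G_i$, then $v_1 v_2 \cdots v_k v_1$ is a monochromatic $C_k$; otherwise $v_1 v_k$ lies in some $G_j$ with $j \ne i$, and the same cycle uses exactly two colours. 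Either case contradicts the $(C_k,3)$-property. The Erdős--Gallai theorem then gives $|E(G_i)| \le (k-2)(2n)/2 = (k-2)n$ for every $i$, and summing over colour classes,
\[ n^2 = |E(K_{n,n})| = \sum_{i=1}^{t} |E(G_i)| \le t(k-2)n, \]
so $t \ge n/(k-2)$, which is in fact slightly stronger than the asymptotic lower bound claimed.

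There is no genuine obstacle to the corollary itself: the substantive work resides entirely in Theorem~\ref{thm:r(Kn,n,Cp,3)} (where presumably the conflict-free hypergraph matching machinery enters). The only subtleties here are the algebraic verification that $\lfloor (1+\sqrt{4k-7})/2\rfloor$ is the optimal choice of inner parameter, and the parity observation ensuring that every $P_k$ in $K_{n,n}$ closes to a $C_k$ via an edge of $K_{n,n}$.
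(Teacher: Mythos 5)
Your proposal is correct and follows essentially the same route as the paper: the upper bound is obtained exactly as in the paper's proof, applying Theorem~\ref{thm:r(Kn,n,Cp,3)} with inner parameter $m=\left\lfloor\frac{1+\sqrt{4k-7}}{2}\right\rfloor$ and $\ell=k$ together with the monotonicity of $r(G,\ca{F},q)$ in $\ca{F}$, and the lower bound rests on the same key observation that every colour class in a $(C_k,3)$-colouring of $K_{n,n}$ must be $P_k$-free because (by parity, $k$ even) any such path closes to a $C_k$ with at most two colours. The only cosmetic difference is that you bound a $P_k$-free colour class via Erd\H{o}s--Gallai on $2n$ vertices, whereas the paper invokes the bipartite extremal number $\mathrm{ex}(n,n;P_k)$ of Gy\'arf\'as, Rousseau and Schelp; both give $(k-2)n+O(1)$ and hence the same asymptotic lower bound.
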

Note that the lower bound in Corollary \ref{cor:r(Kn,n,Cp,3)} is almost certainly not tight: it only gives $r(K_{n,n},C_4,3) \ge n/2+o(n)$, while \cite{joos2022ramsey} gives $r(K_{n,n},C_4,3) = 2n/3+o(n)$.
There is no reason to expect that this situation improves as $k$ grows larger.
However, it is possible that the upper bound is tight, since the proof is analogous to that of Theorem \ref{thm:r(Kn,Cp,3)}, which gives a tight upper bound on $r(K_n,C_k,3)$, and we can recover the tight upper bound $r(K_{n,n},C_4,3) \le 2n/3+o(n)$ of Joos and Mubayi \cite{joos2022ramsey}.

Theorems~\ref{thm:r(Kn,Cp,3)} and \ref{thm:r(Kn,n,Cp,3)}  are both applications of the so-called ``conflict-free hypergraph matching'' method. This method was introduced by Glock, Joos, Kim, K\"{u}hn, and Lichev in \cite{Glock} and independently in \cite{Delcourt} by Delcourt and Postle as the ``forbidden submatching method." Roughly speaking, the method finds matchings in hypergraphs in which certain sets of edges are forbidden. The main idea is that for a hypergraph $\cH$ and a hypergraph $\cC$ with $V(\cC)=E(\cH)$ whose edges are forbidden sets of edges in $\cH$, if $\cC$ has various small degree conditions, then it is possible to find a (nearly) perfect matching in $\cH$.

Both foundational papers~\cite{Delcourt, Glock} developed the method in order to prove asymptotic results about `high-girth' Steiner-systems, thus substantially generalizing work from~\cite{bw, glock2020conjecture} and settling approximate versions of conjectures from~\cite{furedi2013uniform,glock2020conjecture,keevash2020brown}. Since then, this method has already had a plethora of other exciting applications; see~\cite{dkp,dkp2,dp,dp2,fms,gjkklp,ls}. In particular, the results stated above of Bennett, Delcourt, Li, and Postle~\cite{bennett2022generalized} and those of Joos and Mubayi \cite{joos2022ramsey} were also obtained via the conflict-free hypergraph matching method, which was also used for several other related results related to generalized Ramsey numbers; see~\cite{bennettcushman2023generalized,bennett2023edgecoloring,gehpsz}. 

The paper is structured as follows. In Section~\ref{sec:forbidden} we introduce the conflict-free hypergraph matching method and state the main result (Theorem~\ref{thm:GlockConflictFree}) which will be applied in our proofs. Theorem~\ref{thm:r(Kn,Cp,3)} is proved in Section~\ref{sec:r(Kn,Cp,3)}. In Section~\ref{sec:bipartite} we present the proofs of Theorem~\ref{thm:r(Kn,n,Cp,3)} and Corollary~\ref{cor:r(Kn,n,Cp,3)}. As the proof of Theorem~\ref{thm:r(Kn,n,Cp,3)} follows very similar lines to that of Theorem~\ref{thm:r(Kn,Cp,3)}, we give only the key points here and present the more tedious details in Appendix~\ref{app}. We provide some closing remarks and open problems in Section \ref{sec:concl}.

\section{Conflict-Free Hypergraph Matchings} \label{sec:forbidden}
In this section, we state the simplified version of the conflict-free hypergraph matching theorem from \cite{Glock} as presented in \cite{joos2022ramsey}.

Roughly speaking, these theorems show that if $\ca{H}$ is an approximately regular hypergraph, $\ca{C}$ is a hypergraph whose edges are forbidden configurations of edges of $\ca{H}$, and $\ca{C}$ satisfies various small degree conditions, then there exists an almost perfect matching in $\ca{H}$ that does not contain any edge of $\ca{C}$.
In particular, for graphs $G,H$ and $q \ge 2$, if $\ca{H}$ corresponds to possible edge-colourings of subgraphs of a graph $G$, and $\ca{C}$ corresponds to arrangements of these subgraphs in which $H$ receives fewer than $q$ colours, then such a matching yields a $(H,q)$-colouring of almost all edges of $G$.

We require some definitions for a hypergraph $\cH$. For a set $S$, let $S^{(k)}:=\{T \subseteq S: |T|=k\}$.

\begin{itemize}
    \item For $k \in \bb{N}$, say $\cH$ is \emph{$k$-uniform} if $|e|=k$ for all $e \in E(\cH)$.
    \item For $v \in V(\cH)$, let $d_\cH(v) := |\{e \in E(\cH):v\in e\}|$ denote the \emph{degree of $v$ in $\cH$}.
    \item Let $\delta(\cH) := \min_{v \in V(\cH)}d_\cH(v)$ and $\Delta(\cH) := \max_{v\in V(\cH)}d_\cH(v)$.
    \item For $i \in \bN$, let $\cH^{(i)}$ be the spanning sub-hypergraph of $\cH$ comprising all edges of size exactly $i$.
    \item For $i \ge 2$, let $\Delta_i(\cH):= \max_{S \in V(\cH)^{(i)}}|\{e\in E(\cH):S \subseteq e\}|$.
    
\end{itemize}
For hypergraphs $\cH$ and $\cC$, we refer to $\cC$ as a \emph{conflict system} if $E(\cH) = V(\cC)$.
We say a set $S \subseteq E(\cH)$ is \emph{$\cC$-free} if $S$ does not contain any element of $E(\cC)$ as a subset.
\begin{definition}[Glock, Joos, Kim, K\"{u}hn, Lichev \cite{Glock}] \label{def:(d,l,ep)-bounded}
For integers $d\ge 1,\, \ell\ge 3$ and $\ep \in (0,1)$, say that a hypergraph $\cC$ is \emph{$(d,\ell,\ep)$-bounded} if the following holds.
\begin{enumerate}[label=(C\arabic*)]
    \item $3 \le |S| \le \ell$ for all $S \in E(\cC)$;
    \item $\Delta(\cC^{(i)}) \le \ell d^{i-1}$ for all $2 \le i \le \ell$;
    \item $\Delta_j(\cC^{(i)}) \le d^{i-j-\ep}$ for all $3\le i \le \ell$ and $2\le j \le i-1$.
\end{enumerate}
\end{definition}
This means that all uniform sub-hypergraphs of $\cC$ have small degrees and codegrees, which we use as a necessary condition for the conflict-free hypergraph matching theorem.

Glock, Joos, Kim, K\"{u}hn, and Lichev \cite{Glock} also introduce a class of weight functions for which the matching admits quasirandom properties. We now introduce some notation and terminology required to describe these. 
A \emph{test function} for $\cH$ is a function $w:E(\cH)^{(j)}\to [0,\ell]$ where $j \in \bN$ such that $w(S) = 0$ whenever $S$ is not a matching.
We say that $w$ is $j$-uniform and refer to $j$ as the uniformity of $w$.
For $X \subseteq E(\cH)^{(j)}$, we define $w(X) := \sum_{x \in X}w(x)$.
We also extend $w$ to arbitrary subsets $S \subseteq E(\cH)$ by defining $w(S) := w(S^{(j)})$.
We define $w(\cH) := w(E(\cH))$.
For a hypergraph $\cC$ and $e \in V(\cC)$, define $\cC_e := \{S\setminus \{e\}: e \in S \in E(\cC)\}$.
\begin{definition}[Glock, Joos, Kim, K\"{u}hn, Lichev \cite{Glock}]
For $j,d \in \bN$, $\ep>0$ and hypergraphs $\cH$ and $\cC$ with $V(\cC) = E(\cH)$ and $\ell := \max_{S \in E(\cC)}|S|$, we say that a $j$-uniform test function $w$ for $\cH$ is \emph{$(d,\ep,\cC)$-trackable} if the following holds.
\begin{enumerate}[label=(W\arabic*)]
    \item $w(\cH) \ge d^{j+\ep}$;
    \item $w(\{S \in E(\cH)^{(j)}:S \supseteq S'\}) \le w(\cH)/d^{j'+\ep}$ for all $j' \in [j-1]$ and $S' \in E(\cH)^{(j')}$;
    \item $|(\cC_e)^{(j')}\cap (\cC_f)^{(j')}| \le d^{j'-\ep}$ for all distinct $e,f \in E(\cH)$ with $w(\{S \in E(\cH)^{(j)}:e,f \in S\}) >0$ and all $j' \in [\ell-1]$;
    \item $w(S)=0$ for all $S \in E(\cH)^{(j)}$ that are not $\cC$-free.
\end{enumerate}
\end{definition}
\begin{theorem}[Glock, Joos, Kim, K\"{u}hn, Lichev \cite{Glock}] \label{thm:GlockConflictFree}
For all $k,\ell \ge 2$, there exists $\ep_0>0$ such that for all $\ep\in (0,\ep_0)$, there exists $d_0$ such that the following holds for all $d\ge d_0$.
Suppose $\cH$ is a $k$-uniform hypergraph on $n \le \exp(d^{\ep^3})$ vertices with $(1-d^{-\ep})d \le \delta(\cH) \le \Delta(\cH) \le d$ and $\Delta_2(\cH) \le d^{1-\ep}$, and suppose $\cC$ is a $(d,\ell,\ep)$-bounded conflict system for $\cH$.
Suppose $\mathcal{W}$ is a set of $(d,\ep,\cC)$-trackable test functions for $\cH$ of uniformity at most $\ell$ with $|\mathcal{W}| \le \exp(d^{\ep^3})$.
Then there exists a $\cC$-free matching $\mathcal{M} \subseteq E(\cH)$ of size at least $(1-d^{-\ep^3})n/k$ with $w(\mathcal{M}) = (1\pm d^{-\ep^3})d^{-j}w(\cH)$ for all $j$-uniform $w \in \mathcal{W}$.
\end{theorem}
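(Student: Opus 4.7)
The plan is to prove Theorem~\ref{thm:GlockConflictFree} by a conflict-aware semi-random (Rödl nibble) construction of the matching. I would proceed in rounds $t = 0, 1, \ldots, T$ with $T = \Theta(\gamma^{-1}\log d)$, where $\gamma$ is a small constant depending on $\ep$. Set $\cH_0 := \cH$ and $\mathcal{M}_0 := \emptyset$. At round $t$, activate each edge of $\cH_t$ independently with probability $\gamma/d_t$, where $d_t$ is the current nominal degree (chosen so that $d_{t+1} \approx d_t \cdot e^{-\gamma(k-1)}$). From the activated set, retain only edges which (i) do not share a vertex with any other activated edge in round $t$, and (ii) do not complete a conflict in $\cC$ together with edges of $\mathcal{M}_t$ already chosen. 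The retained edges form $\mathcal{M}'_t$, I set $\mathcal{M}_{t+1} := \mathcal{M}_t \cup \mathcal{M}'_t$, and I let $\cH_{t+1}$ be $\cH_t$ minus all edges meeting a vertex of $\mathcal{M}'_t$ or blocked by a conflict with $\mathcal{M}_{t+1}$.

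The first step would be to show inductively that after round $t$, the hypergraph $\cH_t$ remains $(1\pm d^{-\ep})d_t$-regular with $\Delta_2(\cH_t) \le d_t^{1-\ep}$, and that $\mathcal{M}_t$ is $\cC$-free. The expected degree change at a fixed vertex $v$ is dominated by losses to vertex collisions (a factor of $(1-\gamma/d_t)^{k d_t(1 \pm o(1))}$) with a strictly smaller contribution from conflict-induced deletions: by (C2), each edge $e$ through $v$ lies in at most $\ell d_t^{i-1}$ conflict edges of size $i$, and by (C3) the codegrees within conflict edges are $d_t^{i-j-\ep}$, so the expected number of conflict-blocked edges at $v$ per round is $O(\gamma d_t \cdot d_t^{-\ep})$, which is lower-order. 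Concentration follows from a martingale exposure of the round's activations combined with Freedman's inequality; the allowed failure probability $\exp(-d^{\ep^3})$ is afforded by $n \le \exp(d^{\ep^3})$.

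Next I would track the test functions $w \in \mathcal{W}$. Condition (W1) provides enough initial weight for concentration to be meaningful, while (W2) is the standard quasi-randomness condition guaranteeing that $w(\cH_t)$ decreases at the same exponential rate per round in expectation. Conditions (W3) and (W4) handle the conflict part: (W4) enforces that $w$ vanishes on non-$\cC$-free tuples, so conflict-blocked edges contribute nothing, and (W3) bounds the conflict-codegree through $w$, supplying the variance estimate needed to apply Freedman again. A union bound over all $v \in V(\cH)$ and all $w \in \mathcal{W}$ remains feasible because $|\mathcal{W}| \le \exp(d^{\ep^3})$.

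The main obstacle will be controlling the cross-round dependencies introduced by the conflict system: an edge retained in round $t$ can preclude retention of many edges in later rounds via chains through $\cC$. The sharp exponent $d^{i-j-\ep}$ in (C3), and the analogous sharp bound $d^{j'-\ep}$ in (W3), are crucial here: they buy the $d^{-\ep}$ slack needed to keep cumulative error contributions below $d^{-\ep^3}$ over all $T$ rounds. After round $T$, the residual $\cH_T$ has $d_T = d^{1-\Omega(1)}$ and leaves at most $d^{-\ep^3}n$ uncovered vertices; a deterministic greedy completion exploiting the same conflict bounds then extends $\mathcal{M}_T$ to a $\cC$-free matching of size at least $(1-d^{-\ep^3})n/k$ while changing each $w(\mathcal{M})$ by at most $d^{-\ep^3}\cdot d^{-j} w(\cH)$, yielding the claimed estimate.
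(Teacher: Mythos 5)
First, a point of scope: Theorem~\ref{thm:GlockConflictFree} is not proved in this paper at all --- it is quoted (in the simplified form used by Joos and Mubayi) from Glock, Joos, Kim, K\"uhn and Lichev \cite{Glock} and applied as a black box. So there is no proof of it in the paper to compare against; the relevant benchmark is the original argument in \cite{Glock}, which indeed runs a conflict-aware semi-random process and tracks test functions via martingale (Freedman-type) concentration. Your outline follows that same strategy rather than a genuinely different route, so the only question is whether your sketch amounts to a proof. It does not: the decisive quantitative steps are asserted rather than established, and at least one of them is wrong as stated.

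Concretely: (i) your claim that conflict-induced deletions are lower order, ``the expected number of conflict-blocked edges at $v$ per round is $O(\gamma d_t\cdot d_t^{-\ep})$,'' does not follow from (C1)--(C3). Condition (C2) permits a single edge $e$ to lie in as many as $\ell d^{i-1}$ conflicts of size $i$, and since roughly a $1/d$ proportion of the edges of $\cH$ ends up in the matching, the expected number of conflicts completed against $e$ over the whole process is of constant order $\Theta(\ell)$, not $O(d^{-\ep})$; the $d^{-\ep}$ saving in (C3) concerns codegrees of pairs (and hence correlations/variance), not the first-moment blocking rate. Coping with this constant-order blocking effect --- choosing the right notion of availability, the right deterministic trajectory for the available degrees, and proving concentration uniformly over vertices and rounds --- is precisely the technical heart of \cite{Glock}, and your sketch elides it, which also undermines the calibration $d_{t+1}\approx d_te^{-\gamma(k-1)}$ you assume. (ii) The final sentence, asserting that a ``deterministic greedy completion'' reaches size $(1-d^{-\ep^3})n/k$ while changing each $w(\mathcal{M})$ by at most $d^{-\ep^3}d^{-j}w(\cH)$, is essentially the full strength of the theorem claimed in one line: a greedy completion can stall at vertices all of whose remaining edges are conflict-blocked, and preserving the weight estimates through both the late rounds and the completion requires the trackability conditions (W1)--(W4) in a far more intricate way than ``apply Freedman again,'' together with a union bound beating $\exp(d^{\ep^3})$ many events. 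As it stands the proposal is a fair summary of the known proof strategy, not a proof; for the purposes of this paper the theorem should simply be cited from \cite{Glock}.
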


\section{Proof of Theorem \ref{thm:r(Kn,Cp,3)}} \label{sec:r(Kn,Cp,3)}
In this section, we prove Theorem \ref{thm:r(Kn,Cp,3)}, restated here for convenience.
\cycleramsey*

The lower bound in Theorem~\ref{thm:r(Kn,Cp,3)} will follow directly from the following results of Axenovich, F\"{u}redi, and Mubayi \cite{Axenovich} and Faudree and Schelp \cite{faudree1975path}. For a graph $G$ and $n \ge |V(G)|$, let $\mathrm{ex}(n,G)$ be the minimum number of edges in an $n$-vertex graph that does not contain a copy of $G$.

\begin{theorem}[Axenovich, F\"{u}redi, Mubayi \cite{Axenovich}] \label{thm:AxenovichGenLower}
Let $H$ be a connected graph.
Then for any spanning tree $T$ of $H$,
\[r(K_n,H,|E(H)|-|V(H)|+3) \ge \frac{\binom{n}{2}}{\mathrm{ex}(n,T)} = \Omega(n).\]
\end{theorem}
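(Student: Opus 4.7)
The plan is to prove the contrapositive via a simple averaging argument. Suppose that $\chi$ is an $(H,q)$-colouring of $K_n$ using $c$ colours, where $q := |E(H)|-|V(H)|+3$. By pigeonhole, some colour class $G_i \subseteq E(K_n)$ has at least $\binom{n}{2}/c$ edges. If one supposes for contradiction that $c < \binom{n}{2}/\mathrm{ex}(n,T)$, then $|E(G_i)| > \mathrm{ex}(n,T)$, so by definition of the extremal number $G_i$ contains a copy of the spanning tree $T$.

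The next step is to extend this monochromatic copy of $T$ into a copy of $H$. Since $T$ is a \emph{spanning} tree of $H$, the copy of $T$ occupies exactly $|V(H)|$ vertices of $K_n$; because $K_n$ is complete, we can add the remaining $|E(H)|-|E(T)| = |E(H)|-|V(H)|+1$ edges of $H$ on these vertices to realise a copy of $H$. The resulting copy of $H$ uses at most
\[1 + \bigl(|E(H)|-|V(H)|+1\bigr) = |E(H)|-|V(H)|+2 = q-1\]
distinct colours — one for the monochromatic $T$, and at most one new colour per added edge. This contradicts the assumption that $\chi$ is an $(H,q)$-colouring, so $c \ge \binom{n}{2}/\mathrm{ex}(n,T)$, as desired.

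For the concluding $\Omega(n)$ bound, one only needs $\mathrm{ex}(n,T) = O(n)$ for any fixed tree $T$. This is a standard greedy fact: any graph with minimum degree at least $|V(T)|-1$ contains a copy of $T$ (embed the vertices of $T$ one at a time in a BFS order), so repeatedly deleting vertices of degree less than $|V(T)|-1$ yields $\mathrm{ex}(n,T) \le (|V(T)|-1)n$. Hence $\binom{n}{2}/\mathrm{ex}(n,T) = \Omega(n^2/n) = \Omega(n)$.

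There is essentially no obstacle in this proof: the entire argument is a two-step pigeonhole combined with extending a spanning subgraph inside the complete host. The only point worth noting is that the choice $q = |E(H)|-|V(H)|+3$ is precisely calibrated so that a monochromatic spanning tree together with the $|E(H)|-|V(H)|+1$ chord edges still comes in strictly under the colour threshold $q$, which is what forces the contradiction.
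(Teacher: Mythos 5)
Your proof is correct: the pigeonhole step, the extension of a monochromatic spanning tree to a copy of $H$ using the $|E(H)|-|V(H)|+1$ chord edges (giving at most $q-1$ colours), and the linear bound $\mathrm{ex}(n,T)=O(n)$ are all sound. The paper does not prove this statement but cites it from Axenovich, F\"uredi and Mubayi, and your argument is essentially the standard one behind that citation, so there is nothing further to compare.
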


\begin{theorem}[Faudree, Schelp \cite{faudree1975path}] \label{thm:FaudreeExPath}
For all $n,k \in \bN$,
\[\mathrm{ex}(n,P_{k+1}) \le \frac{(k-1)n}{2}.\]
\end{theorem}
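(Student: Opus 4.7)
My plan is to prove this by induction on $n$. The base case $n \le k$ is immediate, since $\binom{n}{2} \le (k-1)n/2$ whenever $n \le k$. For the inductive step, if $G$ is disconnected I would apply the inductive hypothesis to each component and sum, so it suffices to treat connected $G$ with $n \ge k+1$. The goal in this case is to locate a vertex $v$ with $d_G(v) \le (k-1)/2$; deleting such a $v$ leaves a $P_{k+1}$-free graph on $n-1$ vertices, and induction then gives $e(G) \le (k-1)(n-1)/2 + (k-1)/2 = (k-1)n/2$.

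To find such a vertex, I would invoke the classical Erd\H{o}s--Gallai/Dirac-type lemma: every connected graph on $n$ vertices with minimum degree $\delta$ contains a path on at least $\min(n, 2\delta+1)$ vertices. If $\delta(G) \ge k/2$, then with $n \ge k+1$ this lemma produces a $P_{k+1}$ in $G$, contradicting our hypothesis. Hence $\delta(G) < k/2$, and since $\delta(G)$ is a non-negative integer, $\delta(G) \le \lceil k/2 \rceil - 1 \le (k-1)/2$, as required.

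The lemma itself I would establish by a standard longest-path rotation argument. Letting $P = v_0 v_1 \cdots v_p$ be a longest path in $G$, maximality forces $N(v_0), N(v_p) \subseteq V(P)$, so $p \ge \delta$. Assuming for contradiction that $p+1 < n$ and $p < 2\delta$, the index sets $I := \{i : v_0 v_i \in E(G)\}$ and $J := \{i : v_{i-1} v_p \in E(G)\}$ are subsets of $\{1, \ldots, p\}$ of size at least $\delta$ each, so $|I \cap J| \ge 2\delta - p > 0$ by inclusion-exclusion. Any $i \in I \cap J$ yields a cycle $v_0 v_1 \cdots v_{i-1} v_p v_{p-1} \cdots v_i v_0$ spanning $V(P)$; since $p+1 < n$ and $G$ is connected, there is an edge from this cycle to a vertex outside $V(P)$, producing a path longer than $P$, a contradiction.

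The main obstacle is really just carrying out the rotation argument cleanly; everything else is routine induction. A minor technicality is that $(k-1)/2$ need not be an integer when $k$ is even, but this poses no issue since we only need the real inequality $d_G(v) \le (k-1)/2$, which is implied by the integer bound $d_G(v) \le \lceil k/2 \rceil - 1$.
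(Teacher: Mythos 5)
Your proof is correct. Note, though, that the paper does not prove this statement at all: it is quoted as a black box from Faudree and Schelp, whose paper in fact determines the extremal numbers for paths exactly (a stronger statement than the bound $\tfrac{(k-1)n}{2}$ used here, which is the classical Erd\H{o}s--Gallai bound). What you have written is a correct self-contained derivation of that bound: strong induction on $n$, reduction to connected graphs, the Dirac-type lemma that a connected graph with minimum degree $\delta$ contains a path on $\min(n,2\delta+1)$ vertices, and the standard longest-path rotation/cycle-absorption argument to prove that lemma. All the steps check out, including the parity bookkeeping $\delta(G)\le\lceil k/2\rceil-1\le (k-1)/2$ and the deletion step $e(G)\le e(G-v)+\tfrac{k-1}{2}$; the degenerate small-$k$ and spanning-path cases cause no trouble. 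So relative to the paper you are supplying a proof where the authors only supply a citation, and your route buys self-containment at the cost of proving only the averaged bound rather than the exact extremal function of Faudree--Schelp; for the application in the paper (the linear lower bound on $r(K_n,C_{[k,\ell]},3)$ via Theorem~\ref{thm:AxenovichGenLower}), the bound you prove is all that is needed.
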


To prove the upper bound in Theorem~\ref{thm:r(Kn,Cp,3)} we apply Theorem~\ref{thm:GlockConflictFree} to prove a technical lemma (see Lemma~\ref{lem:Cpcolouring} below) that gives a very structured $(C_{[k,\ell]},3)$-colouring of a large subgraph $G \subseteq K_n$. We are then able to show that adding a suitable random colouring of $K_n - G$ will not create a cycle with length in $[k,\ell]$ that has fewer than 3 colours. To state this technical lemma, we first require some more definitions. 

For a hypergraph $\cH$, a \emph{cycle of length $\ell$} is a sequence $e_1,e_2,\ldots,e_\ell$ of distinct edges in $E(\cH)$ such that there exist distinct vertices $v_1,v_2,\ldots,v_\ell \in V(\cH)$ where, taking indices modulo $\ell$, we have $e_i\cap e_{i+1} = \{v_i\}$ for all $i \in [\ell]$. The \emph{girth} of a hypergraph $\cH$ is the minimum $\ell$ such that $\cH$ has a cycle of length $\ell$ (or $\infty$ if it has no such cycle).
In what follows, to simplify the calculations, we will treat cycles $a_1\bc \ldots\bc a_k$ and $b_1\bc \ldots\bc b_k$ in a (hyper)graph as distinct if $a_i \neq b_i$ for some $i \in [k]$, even if they represent the same circular arrangement. Note that this does not quite match the standard notation $v_1,v_2,\ldots,v_k,v_1$ we use for graphs. For $x \in \mathbb{R}^+$, denote $[x]:= \{1,2,\ldots,\lfloor x \rfloor\}$.

We are now ready to state our technical lemma, which is of a similar spirit to the technical lemma of Joos and Mubayi in \cite{joos2022ramsey}.

\begin{lemma} \label{lem:Cpcolouring}
For each $\ell \ge k \ge 3$, there exists $\delta>0$ depending only on $\ell$ and $k$ such that for all sufficiently large $n$, there exists an edge-colouring of a spanning subgraph $G$ of $K_n$ with at most $n/(k-2)$ colours and the following properties:
\begin{enumerate}[label=(\Roman*)]
    \item Every colour class consists of vertex-disjoint copies of $K_{k-1}$.
    \item Given any two colours $i,j$, the $(k-1)$-uniform hypergraph with vertex set $V(G)$ where each edge is formed by the vertex set of a copy of $K_{k-1}$ in colour $i$ or $j$ has girth at least $2\lfloor\frac{\ell}{2}\rfloor+1$.
    \item The graph $L:= K_n-E(G)$ has maximum degree at most $n^{1-\delta}$.
    \item For each $2 \le m \le \ff{\ell}{2}$ and $u_1v_m \in E(K_n)$, the number of cycles $u_1\bc v_1\bc u_2\bc v_2\bc \ldots\bc u_m\bc v_m\bc u_1$ such that $v_tu_{t+1} \in E(L)$ for all $t\in [m-1]$ and there are disjoint monochromatic copies $F_1,\ldots,F_m$ of $K_{k-1}$, all of the same colour, such that $v_tu_t \in E(F_t)$ for all $t\in [m]$, is at most $n^{(m-1)(1-\delta)}$.
\end{enumerate}
\end{lemma}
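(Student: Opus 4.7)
The plan is to apply Theorem~\ref{thm:GlockConflictFree} to an auxiliary $\binom{k}{2}$-uniform hypergraph $\cH$ whose matchings encode partial edge-colourings of $K_n$ in which each colour class is a vertex-disjoint union of $K_{k-1}$'s.  Set $t := \cf{n-1}{k-2}$ and $V(\cH) := E(K_n) \cup \bigl(V(K_n) \times [t]\bigr)$; for each copy $F$ of $K_{k-1}$ in $K_n$ and each $c \in [t]$, include $e_{F,c} := E(F) \cup \{(v,c) : v \in V(F)\}$ as an edge of $\cH$.  Any matching $\cM \subseteq E(\cH)$ yields a partial colouring of $K_n$ by colouring $E(F)$ with colour $c$ for each $e_{F,c} \in \cM$; the matching condition forces vertex-disjointness of the $K_{k-1}$'s within each colour (property (I)) and ensures that no edge of $K_n$ is used twice.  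A direct count gives $d := \binom{n-1}{k-2} = \Theta(n^{k-2})$ for the common degree of $\cH$, with $\Delta_2(\cH) = O(n^{k-3}) = O(d^{1-1/(k-2)})$.

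To enforce (II), let $\cC$ be the conflict system whose edges are, for each $3 \le i \le 2\ff{\ell}{2}$, each pair of colours $c, c' \in [t]$, and each sequence $F_1, \ldots, F_i$ of $K_{k-1}$'s whose vertex sets form a cycle of length $i$ in the $(k-1)$-uniform sense, the corresponding $i$-subset of $E(\cH)$ with colours drawn from $\{c, c'\}$.  A $\cC$-free matching then contains no short cycle in any two-colour union, yielding (II).  For (III), for each $v \in V(K_n)$ take the $1$-uniform test function $w_v(e_{F,c}) := (k-2)\mathbf{1}[v \in V(F)]$ which counts edges at $v$ covered by $e_{F,c}$.  Since $w_v(\cH) = (n-1)d$, trackability gives $w_v(\cM) = (1 \pm d^{-\ep^3})(n-1)$, so at most $d^{-\ep^3} n \le n^{1-\delta}$ edges of $L$ are incident to $v$ for $\delta$ any sufficiently small positive multiple of $\ep^3$.

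Property (IV) is the most delicate.  For each $u_1 v_m \in E(K_n)$, each $2 \le m \le \ff{\ell}{2}$, and each $S \subseteq [m-1]$, introduce the $(m + |S|)$-uniform test function $w_S$ whose weight on a matching set $\{e_{F_1,c}, \ldots, e_{F_m, c}\} \cup \{e_{F'_i, c'_i}: i \in S\}$ equals the number of cycles $u_1, v_1, u_2, \ldots, u_m, v_m, u_1$ in $K_n$ with $v_t u_t \in F_t$ for all $t \in [m]$ (all of colour $c$) and $v_i u_{i+1} \in F'_i$ for $i \in S$.  Since each edge of $K_n$ is covered by at most one matching edge, inclusion-exclusion over the events ``$v_i u_{i+1} \in G$'' expresses the (IV)-count exactly as
\[
\sum_{S \subseteq [m-1]} (-1)^{|S|}\, w_S(\cM).
\]
A routine count gives $w_S(\cH) = (1 + O(|S|/n))\, w_\emptyset(\cH)\, d^{|S|}$, so each $w_S(\cH)/d^{m+|S|} = (1 + O(1/n))\, N$ where $N := w_\emptyset(\cH)/d^m = \Theta(n^{m-1})$.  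The leading $N$-terms cancel in the alternating sum, leaving a main term of size $O(N/n) = O(n^{m-2})$, while trackability contributes a total error of size $O(2^{m} N d^{-\ep^3}) = O(n^{m-1-(k-2)\ep^3})$.  Both are at most $n^{(m-1)(1-\delta)}$ for any sufficiently small $\delta > 0$.

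The main technical obstacle is the bookkeeping required to verify (C1)--(C3) for $\cC$ and (W1)--(W4) for each of the $O(n^2 \cdot 2^{\ell})$ test functions $w_S$; all of these reduce to standard degree and codegree estimates for hypergraph cycles and for matching extensions in $\cH$.  Condition (W1) requires $w_\emptyset(\cH) = d^{m + (m-1)/(k-2)} \ge d^{m + \ep}$, forcing $\ep < 1/(k-2)$; (W4) is arranged by restricting each $w_S$ to $\cC$-free supports.  Once the conditions hold and a single $\delta > 0$ is chosen uniformly for all $m$, Theorem~\ref{thm:GlockConflictFree} produces a $\cC$-free matching yielding the desired graph $G$ and colouring.
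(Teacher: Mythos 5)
Your overall strategy is the same as the paper's: the same auxiliary $\binom{k}{2}$-uniform hypergraph $\cH$ encoding (copy of $K_{k-1}$, colour) pairs, the same use of Theorem~\ref{thm:GlockConflictFree}, two-coloured short cycles as conflicts for (II), and a degree test function $w_v$ for (III); properties (I)--(III) go through as you describe, up to small normalisations (with $t=\lceil (n-1)/(k-2)\rceil$ colours you can exceed the permitted $n/(k-2)$ colours and the bound $\Delta(\cH)\le d$ by a constant -- take $\lfloor n/(k-2)\rfloor$ colours and $d=n^{k-2}/(k-2)!$ as a cushion -- and each $w_S$ must be rescaled or split into indicators so that its values lie in $[0,\ell]$ as the definition of a test function requires). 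One structural point: trim your conflict system to alternating, even, two-coloured cycles only. The extra conflicts you include (odd length, or with two consecutive edges of the same colour) can never occur inside a matching anyway, since two intersecting cliques of the same colour $i$ share the vertex $(v,i)$ of $\cH$; they are not needed for (II), and keeping them destroys the parity argument used to verify (W3) for higher-uniformity test functions when the two support edges $e,f$ carry different colours (with alternating even conflicts one gets $\cC_e\cap\cC_f=\emptyset$ in that case, as in Claim~\ref{clm:Cp_wuvm'_trackable}).

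The genuine gap is in your treatment of (IV). The identity ``(IV)-count $=\sum_{S\subseteq[m-1]}(-1)^{|S|}w_S(\ca{M})$'' is not exact: the $S$-term of the inclusion--exclusion should count cycles with $v_iu_{i+1}\in E(G(\ca{M}))$ for all $i\in S$, but the matching edges covering these $v_iu_{i+1}$ need not be $|S|$ new, pairwise distinct edges disjoint from the $(F_t,c)$'s -- the edge $v_iu_{i+1}$ may itself lie in some $F_t$ (i.e.\ have colour $c$), and two different edges $v_iu_{i+1}$, $v_ju_{j+1}$ may lie in a common matching edge. Such configurations are invisible to the $(m+|S|)$-uniform $w_S$, so $w_S(\ca{M})$ undercounts, and the omitted degenerate terms enter the alternating sum with both signs. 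Since your argument hinges on cancelling main terms of size $\Theta(n^{m-1})$ down to $O(n^{m-2})$, these corrections are not automatically negligible; they would have to be tracked by further test functions of lower uniformity (or bounded using that each vertex lies in at most one clique of each colour), which the proposal does not do, and your claim of exactness is false as stated. The paper sidesteps all of this: for an upper bound in (IV) it suffices to discard only the cycles with $v_1u_2\in E(G)$, so one tracks just two quantities -- the paths $\ca{P}_{u,v,m}$ (upper bound on all candidate cycles) and their one-step extensions $\ca{T}_{u,v,m}$ by a single differently-coloured clique through $v_1$ and $u_2$ (lower bound on those with $v_1u_2\in E(G)$) -- and bounds (IV) by the difference, with no inclusion--exclusion and no degenerate-configuration analysis. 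Either adopt that simpler route or supply the missing treatment of the coincidence patterns.
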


Lemma~\ref{lem:Cpcolouring} is proved in Subsection~\ref{subsec:techlem} below using the forbidden submatching method. 

For the proof of Theorem~\ref{thm:r(Kn,Cp,3)} we will use the following generalization of the  Lov\'asz Local Lemma.
For a set of events $\cE$, let the \emph{dependency graph} of $\cE$ be the graph $G$ with $V(G) = \cE$ and $E(G) = \{EE':E,E' \text{ are dependent}\}$.
\begin{lemma}[Asymmetric Lov\'asz Local Lemma \cite{spencer1977asymptotic}] \label{lem:asymlovaszlocal}
Let $E_1,\ldots,E_n$ be events with dependency graph $G$.
If there exist $x_1,\ldots,x_n \in (0,1)$ such that for all $i \in [n]$,
\[ \bP(E_i) \le x_i\prod_{E_j \in N_G(E_i)}(1-x_j), \]
then $\bP(\bigcap_{i=1}^n\overline{E_i}) >0$.
\end{lemma}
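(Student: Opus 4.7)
The plan is to establish, by induction on $|S|$, the sharper conditional bound
\[ \bP\Bigl(E_i \,\Big|\, \bigcap_{j \in S} \overline{E_j}\Bigr) \le x_i \quad \text{for every } i \in [n] \text{ and every } S \subseteq [n] \setminus \{i\}. \]
Once this is in hand, the theorem falls out from the chain rule:
\[ \bP\Bigl(\bigcap_{i=1}^n \overline{E_i}\Bigr) = \prod_{i=1}^n \bP\Bigl(\overline{E_i} \,\Big|\, \bigcap_{j<i} \overline{E_j}\Bigr) \ge \prod_{i=1}^n (1 - x_i) > 0, \]
since each $x_i \in (0,1)$. The base case $S = \emptyset$ is immediate from the hypothesis, because $\prod_{E_j \in N_G(E_i)} (1 - x_j) \le 1$ forces $\bP(E_i) \le x_i$.

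For the inductive step I would partition $S = S_1 \sqcup S_2$, where $S_1 := S \cap N_G(E_i)$ and $S_2 := S \setminus S_1$, and write
\[ \bP\Bigl(E_i \,\Big|\, \bigcap_{j \in S} \overline{E_j}\Bigr) = \frac{\bP\bigl(E_i \cap \bigcap_{j \in S_1} \overline{E_j} \,\big|\, \bigcap_{j \in S_2} \overline{E_j}\bigr)}{\bP\bigl(\bigcap_{j \in S_1} \overline{E_j} \,\big|\, \bigcap_{j \in S_2} \overline{E_j}\bigr)}. \]
The numerator is bounded above by $\bP(E_i \mid \bigcap_{j \in S_2} \overline{E_j})$, which equals $\bP(E_i)$ because $E_i$ is mutually independent of $\{E_j : j \in S_2\}$ by the definition of the dependency graph; the hypothesis then bounds it above by $x_i \prod_{E_j \in N_G(E_i)} (1 - x_j)$. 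For the denominator, I would enumerate $S_1 = \{j_1, \ldots, j_r\}$ and expand via the chain rule, so that each factor $\bP\bigl(\overline{E_{j_t}} \,\big|\, \bigcap_{s<t} \overline{E_{j_s}} \cap \bigcap_{j \in S_2} \overline{E_j}\bigr)$ is conditioned on a set strictly smaller than $S$; the inductive hypothesis then gives each factor at least $1 - x_{j_t}$, producing the lower bound $\prod_{j \in S_1}(1-x_j)$. Since $S_1 \subseteq N_G(E_i)$, cancelling against the matching factors in the numerator yields
\[ \bP\Bigl(E_i \,\Big|\, \bigcap_{j \in S} \overline{E_j}\Bigr) \le x_i \prod_{E_j \in N_G(E_i) \setminus S_1} (1 - x_j) \le x_i, \]
closing the induction.

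The main obstacle is bookkeeping rather than anything conceptual: one must order the factors in the denominator expansion so that the conditioning sets shrink in exactly the form the inductive hypothesis requires, and verify that all conditional probabilities are well-defined (no conditioning event has probability zero, which itself follows from the induction, since at each stage it guarantees $\bP(\overline{E_j} \mid \cdots) \ge 1 - x_j > 0$). One also has to take care to use \emph{mutual} rather than pairwise independence of $E_i$ from $\{E_j : j \in S_2\}$, since what is actually needed is independence of $E_i$ from the joint event $\bigcap_{j \in S_2} \overline{E_j}$, which is a strictly stronger notion implicit in the standard definition of the dependency graph.
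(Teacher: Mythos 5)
Your proof is correct: it is the standard inductive argument for the asymmetric Local Lemma (prove $\bP(E_i \mid \bigcap_{j\in S}\overline{E_j}) \le x_i$ by induction on $|S|$, splitting $S$ into neighbours and non-neighbours of $E_i$, then finish with the chain rule), and the paper itself does not reprove this statement but simply cites it from the literature, so there is no alternative argument to compare against. You are also right to flag the mutual-independence subtlety: the lemma requires each $E_i$ to be mutually independent of the family of its non-neighbours (not merely pairwise independent of each), which is what the standard notion of a dependency graph provides and what your bound on the numerator actually uses.
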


We next put all these results together to complete the proof of Theorem \ref{thm:r(Kn,Cp,3)}.
\begin{proof}[Proof of Theorem \ref{thm:r(Kn,Cp,3)}]
By definition, a $(C_{[k,\ell]},3)$-colouring of $K_n$ is a $(C_k,3)$-colouring of $K_n$, so $r(n,C_{[k,\ell]},3) \ge r(n,C_k,3)$.
By combining Theorems \ref{thm:AxenovichGenLower} and \ref{thm:FaudreeExPath}, we obtain the following lower bound:
\[ r(n,C_{[k,\ell]},3) \ge r(K_n,C_k,3) \ge \frac{\binom{n}{2}}{\mathrm{ex}(n,P_k)} \ge \frac{\frac{n(n-1)}{2}}{\frac{k-2}{2}n} = \frac{n}{k-2}+o(n). \]
Hence, it suffices to prove, for $3 \le k \le \ell$,
\[ r(K_n,C_{[k,\ell]},3) \le \frac{n}{k-2}+o(n).\]
Apply Lemma \ref{lem:Cpcolouring} to obtain a coloured graph $G$ and a number $\delta$ as stated in Lemma \ref{lem:Cpcolouring}.
\begin{claim} \label{clm:(Cp,3)-coloured}
    $G$ is $(C_{[k,\ell]},3)$-coloured.
\end{claim}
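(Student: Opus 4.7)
The plan is a proof by contradiction: suppose $G$ contains a cycle $C$ of length $m$ with $k\le m\le \ell$ that uses at most two colours.

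First I would rule out the use of a single colour. By property (I), each colour class is a vertex-disjoint union of copies of $K_{k-1}$, so every monochromatic connected subgraph of $G$ spans at most $k-1$ vertices. Since $m\ge k$, the cycle $C$ cannot be monochromatic. So exactly two colours $i,j$ appear on $C$. I would decompose $C$ into its maximal monochromatic arcs $P_1,\ldots,P_r$ in cyclic order; colours alternate along $C$, so $r$ is even and $r\ge 2$. By (I) plus connectedness of each $P_s$, there is a unique copy $F_s$ of $K_{k-1}$ in the colour of $P_s$ containing $P_s$. Let $v_s$ be the transition vertex shared by $P_s$ and $P_{s+1}$; these $r$ vertices $v_1,\ldots,v_r$ are distinct vertices of $C$.

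The key structural step is to observe that, since $F_s$ and $F_{s+1}$ are complete graphs of different colours, they cannot share two vertices $a,b$: the edge $ab$ would then be forced to carry both colours simultaneously. Hence $F_s\cap F_{s+1}=\{v_s\}$. This instantly kills the case $r=2$: the two arcs share both endpoints, so $F_1\cap F_2\supseteq\{v_1,v_2\}$, contradiction.

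For $r\ge 4$ I would exhibit a hypergraph cycle in $\mathcal{H}_{ij}$ (the $(k-1)$-uniform hypergraph whose edges are vertex-sets of monochromatic $K_{k-1}$'s of colour $i$ or $j$) of length at most $2\lfloor \ell/2 \rfloor$, contradicting property (II). Since $r$ is even and $r\le m\le \ell$, we have $r\le 2\lfloor \ell/2\rfloor$. If $F_1,\ldots,F_r$ are pairwise distinct, then together with the transitions $v_1,\ldots,v_r$ they form a hypergraph cycle of length $r$ in $\mathcal{H}_{ij}$, and we are done. Otherwise, pick a pair $s<t$ with $F_s=F_t$ and $t-s$ minimal. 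The same-colour constraint forces $t-s$ to be even. The case $t-s=2$ would force $\{v_s\}=F_{s+1}\cap F_s=F_{s+1}\cap F_{s+2}=\{v_{s+1}\}$ by the key observation, contradicting distinctness of the transition vertices. So $t-s\ge 4$, and minimality gives that $F_s,F_{s+1},\ldots,F_{t-1}$ are pairwise distinct, hence together with $v_s,\ldots,v_{t-1}$ form a hypergraph cycle of length $t-s\le r\le 2\lfloor \ell/2\rfloor$ in $\mathcal{H}_{ij}$, again contradicting (II).

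The main obstacle is handling non-injectivity of the sequence $F_1,\ldots,F_r$: the minimality-of-$(t-s)$ trick converts any repetition into a genuine, shorter hypergraph cycle whose length still falls below the girth threshold of property (II). Everything else is bookkeeping around the disjointness-of-colour-classes property (I) and the one-colour-per-edge fact.
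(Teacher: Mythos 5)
Your proof is correct and follows essentially the same route as the paper: decompose the $2$-coloured cycle into maximal monochromatic arcs, pass to the monochromatic copies of $K_{k-1}$ containing them, and exhibit a cycle of length at most $2\lfloor \ell/2\rfloor$ in the two-colour $(k-1)$-uniform hypergraph, contradicting property (II). You are in fact slightly more careful than the paper, which simply asserts that the $F_i$ are pairwise edge-disjoint (and same-coloured ones vertex-disjoint): your minimal-gap argument explicitly disposes of a repeated clique by producing an even shorter forbidden hypergraph cycle, and you also spell out the monochromatic and two-arc degenerate cases that the paper leaves implicit.
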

\begin{proof}[Proof of Claim]
Suppose that $G$ is not $(C_h,3)$-coloured for some $k \le h \le \ell$.
Let $\cH$ be the hypergraph described in Lemma \ref{lem:Cpcolouring}(II), and let $C=v_1,v_2,\ldots,v_h,v_1$ be a $2$-coloured $h$-cycle in $G$.
Partition $C$ into maximal-length monochromatic paths $P_1,\ldots,P_t$ such that (setting $P_{t+1}=P_1$) $V(P_i) \cap V(P_{i+1}) \neq \emptyset$ for all $i \in [t]$, and for each $i \in [t]$, let $u_i \in V(P_i) \cap V(P_{i+1})$.
By the definition of $\cH$, each $P_i$ is a subgraph of a monochromatic copy of $K_{h-1}$, and since the $P_i$ have maximal length, the colours of the $P_i$ alternate.
Thus if $h$ is odd, then at least one of the $P_i$ has length greater than $1$, else there exist $P_i,P_{i+1}$ that receive the same colour, contradicting the maximality of their lengths.
This implies $t \le 2\lfloor \frac{h}{2} \rfloor \le 2\lfloor \frac{\ell}{2} \rfloor$.

For each $i \in [t]$, let $F_i$ be the monochromatic copy of $K_{k-1}$ that contains $P_i$ as as subgraph.
Then since each $F_i,F_j$ are edge-disjoint for $i\neq j$ and vertex-disjoint for $i\neq j$, $i-j \equiv 0 \pmod{2}$, the $u_i$ are distinct and $V(F_i)\cap V(F_{i+1}) = \{u_i\}$ for all $i$, so $V(F_1)\bc V(F_2)\bc \ldots\bc V(F_t)$ is a $2$-coloured cycle of length $t \le 2\lfloor \frac{\ell}{2}\rfloor$ in $\cH$, contradicting Lemma \ref{lem:Cpcolouring}(II).
Therefore, $G$ is $(C_h,3)$-coloured for all $k \le h \le \ell$, i.e., $G$ is ($C_{[k,\ell]}$,3)-coloured.
\end{proof}
Let $0 < \alpha< \min\{\frac{\delta}{4},\frac12\}$.

We will colour edges in $L$ randomly such that there is a non-zero probability that the $(C_{[k,\ell]},3)$-colouring is preserved.
Let $P$ be a set of $r:=n^{1-\alpha}$ new colours that do not colour edges in $G$.
Colour each edge of $L$ independently with a colour from $P$ with uniform probability $1/r$.

For any pair $e,f$ of adjacent edges in $L$ and $i \in P$, let $A_{e,f,i}$ be the event that both $e$ and $f$ receive colour $i$.
Then $\bP[A_{e,f,i}] = r^{-2} = n^{-2(1-\alpha)}$.
For $\cf{k}{2}\le m\le \ff{\ell}{2}$ and a $2m$-cycle $D$ in $L$, define $B_D$ to be the event that $D$ is properly $2$-coloured.
Then once we have fixed the colours of two adjacent edges, there is only one choice for the colour of each remaining edge, so $\bP[B_D] \le r^{-(2m-2)} = n^{-(2m-2)(1-\alpha)}$.
For $2\le m \le \ff{\ell}{2}$ and a $2m$-cycle $D=u_1\bc v_1\bc \ldots u_m\bc v_m \bc u_1$ in $K_n$ where there are disjoint monochromatic copies of $F_1,\ldots,F_m$ of $K_{k-1}$ in $G$, all of the same colour, such that $u_iv_i \in E(F_i)$ for all $i \in [m]$ and $v_iu_{i+1} \in E(L)$ (taken mod $m$) for all $i \in [m]$, and for a colour $j \in P$, let $C_{D,j}$ be the event that every $v_iu_{i+1}$ (mod $m$) receives the colour $j$.
Then $\bP[C_{D,j}] = r^{-m}= n^{-m(1-\alpha)}$.
Let $\ca{A}$ be the set of all defined $A_{e,f,i}$, and for all $m$, let $\ca{B}_m$, and $\ca{C}_m$ be the sets of all defined $B_D$ and $C_{D,j}$, respectively, where $D$ has length $2m$.
Let 
\[\cE:=\ca{A}\cup \bigcup_{m=\lceil k/2 \rceil}^{\lfloor \ell/2 \rfloor}\ca{B}_m \cup \bigcup_{m=2}^{\lfloor \ell/2 \rfloor}\ca{C}_m.\]

\begin{claim} \label{clm:NoEventAsymLovasz}
There exists a colouring of $L$ such that none of the events in $\cE$ occurs.
\end{claim}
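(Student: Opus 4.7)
The plan is to invoke the asymmetric Lov\'asz Local Lemma (Lemma~\ref{lem:asymlovaszlocal}) with the choice $x_E := 2\bP[E]$ for every $E \in \cE$. Two events in $\cE$ are mutually independent unless their outcomes depend on the colour of a common edge of $L$, and each event in $\cE$ is determined by at most $2\ell = O(1)$ edges of $L$. Hence it suffices to show that
\[\Sigma(e) := \sum_{E \in \cE \,:\, e \text{ affects } E} x_E = o(1)\]
uniformly in $e \in L$, since then $\sum_{E' \sim E}x_{E'} \le 2\ell\cdot\max_{e}\Sigma(e) = o(1)$, so $\prod_{E' \sim E}(1-x_{E'}) \ge 1/2$ for large $n$, and combined with $x_E = 2\bP[E]$ this yields the LLL hypothesis $\bP[E] \le x_E\prod_{E' \sim E}(1-x_{E'})$. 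The whole task reduces to bounding $\Sigma(e)$.

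The proposal is to estimate the contribution to $\Sigma(e)$ from the three families $\{A_{e,f,i}\}$, $\{B_D\}$, $\{C_{D,j}\}$ separately, always leveraging the choice $0 < \alpha < \delta/4$. For events $A_{e,f,i}$, Property~(III) of Lemma~\ref{lem:Cpcolouring} gives $\Delta(L) \le n^{1-\delta}$, so at most $O(n^{1-\delta}) \cdot r$ such events depend on $e$ and each has probability $r^{-2}$; the total is $O(n^{-\delta + \alpha}) = o(1)$. For events of type $B_m$, the same degree bound yields at most $\Delta(L)^{2m-2} \le n^{(2m-2)(1-\delta)}$ cycles of length $2m$ in $L$ through $e$, each contributing probability $\le r^{-(2m-2)}$, for a total of $O(n^{(2m-2)(\alpha-\delta)}) = o(1)$. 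Both of these estimates are routine once Property~(III) is in hand.

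The main obstacle, as I see it, is controlling the contribution from the events $C_{D,j}$, because a naive degree-based count on $2m$-cycles in $K_n$ through $e$ would produce $\Delta(L)^{2m-1}$, which is far too weak once multiplied by the factor of $r$ for choosing $j$ and the probability $r^{-m}$. This is precisely the purpose of Property~(IV) of Lemma~\ref{lem:Cpcolouring}: after fixing which of the $m$ $L$-edges of $D$ plays the role of $e$ (at most $2m = O(1)$ choices by cyclic relabeling), Property~(IV) limits the number of valid cycles $D$ through $e$ to $O(n^{(m-1)(1-\delta)})$. Combining with $r = n^{1-\alpha}$ choices of $j \in P$ and $\bP[C_{D,j}] = r^{-m}$, a short calculation shows
\[O\big(n^{(m-1)(1-\delta) + (1-\alpha) - m(1-\alpha)}\big) = O\big(n^{(m-1)(\alpha-\delta)}\big) = o(1).\]
Summing the $O(1)$-many contributions over the valid ranges of $m$ gives $\Sigma(e) = o(1)$, and the claim follows from Lemma~\ref{lem:asymlovaszlocal}.
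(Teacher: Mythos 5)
Your proposal is correct and follows essentially the same route as the paper: both apply the asymmetric Lov\'asz Local Lemma with the dependency counts coming from Property~(III) for the $A$- and $B$-type events and Property~(IV) (after the $O(1)$ cyclic relabeling) for the $C$-type events, with the factor $r=n^{1-\alpha}$ for the colour choice, and both conclude via $\alpha<\delta$. The only difference is cosmetic: you take $x_E = 2\bP[E]$ and show the neighbourhood sums are $o(1)$, whereas the paper chooses explicit weights $x,y_m,z_m$ that exceed the probabilities by small powers of $n$ and shows the correction factor $f(E)\to 1$.
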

\begin{proof}[Proof of Claim]
Say two events in $\cE$ are \emph{edge-disjoint} if the edges in $L$ associated with each of them are distinct.
For an event $E$ and a set of events $\ca{X}$, let $d_\ca{X}(E)$ be the number of events in $\ca{X}$ that are not edge-disjoint from $E$.
Then by Lemma \ref{lem:asymlovaszlocal}, it suffices to prove that there exist $x\bc y_{\lceil k/2 \rceil}\bc \ldots \bc y_{\lfloor \ell/2 \rfloor} \bc z_2 \bc \ldots \bc z_{\lfloor \ell/2 \rfloor} \in (0,1)$ such that for all $A \in \ca{A}$, $B_m \in \ca{B}_m$ ($\cf{k}{2}\le m \le \ff{\ell}{2}$), and $C_m \in \ca{C}_m$ ($2 \le m \le \ff{\ell}{2}$), we have
\begin{equation} \label{eqn:ProbBounds}
    \bP(A) \le xf(A), \quad \bP(B_m)\le y_mf(B_m), \quad \bP(C_m) \le z_mf(C_m),
\end{equation}
where for all $E \in \ca{E}$,
\[ f(E) := (1-x)^{d_{\ca{A}}(E)}\prod_{m = \lceil k/2 \rceil}^{\lfloor \ell/2 \rfloor}(1-y_{m})^{d_{\ca{B}_{m}}(E)}\prod_{m=2}^{\lfloor \ell/2 \rfloor}(1-z_{m})^{d_{\ca{C}_{m}}(E)}. \]
Fix $E \in \cE$, and let $S=\{e_1,\ldots,e_t\}$ be the edges in $L$ associated with $E$.

For each $e_i \in S$ and end-vertex $v$ of $e_i$, there are at most $\Delta(L)$ edges in $L$ that share the vertex $v$ with $e_i$, and there are $r$ colours in $P$.
Thus by Lemma \ref{lem:Cpcolouring}(III),
\begin{equation*}
    d_\ca{A}(E) \le 2t\Delta(L)r\le 2tn^{1-\delta}r = 2tn^{2-\delta-\alpha}.
\end{equation*}
For each $e_i=uv \in S$ and $\cf{k}{2}\le m \le \ff{\ell}{2}$, to choose a cycle $D=u\bc v\bc u_2\bc v_2\bc \ldots\bc u_mv_m,u$ containing $e_i$, we choose $u_i \in N_{L}(v_{i-1})$ and $v_i \in N_{L}(u_i)$ for all $2 \le i \le m$, so there are at most $\Delta(L)^{2m-2}$ choices for $D$.
Thus by Lemma \ref{lem:Cpcolouring}(III),
\begin{equation*}
    d_{\ca{B}_m}(E) \le t\Delta(L)^{2m-2} \le tn^{(m-2)(1-\delta)}\le tn^{(m-2)-(m-2)\delta}.
\end{equation*}
By Lemma \ref{lem:Cpcolouring}(IV), for $2\le m \le \ff{\ell}{2}$,
\begin{equation*}
    d_{\ca{C}_m}(E) \le trn^{(m-1)(1-\delta)} \le tn^{1-\alpha}n^{\left(m-1\right)(1-\delta)} =  tn^{m-\left(m-1\right)\delta-\alpha}.
\end{equation*}
Set
\begin{align*}
    x=n^{-2+3\alpha}, && y_m=n^{-(2m-2)+(2m-1)\alpha}, && z_m=n^{-m+(m+1)\alpha}.
\end{align*}

Thus, since $(1-a^{-1})^a \ge \frac{1}{4}$ for all $a\ge 2$, $n$ is arbitrarily large, and $\alpha < \frac23$, we have
$$(1-x)^{d_\ca{A}(E)} \ge (1-n^{-2+3\alpha})^{2tn^{2-\delta-\alpha}} = \left(1-n^{-(2-3\alpha)}\right)^{(n^{2-3\alpha})2t(n^{2\alpha-\delta})} \ge \left(\frac{1}{4}\right)^{2tn^{2\alpha-\delta}}.$$
Similarly, for all $m$, we have 
$$(1-y_m)^{d_{\ca{B}_m}(E)} \ge \left(\frac{1}{4}\right)^{tn^{(2m-1)\alpha-(2m-2)\delta}} \hspace{.5cm} \text{ and } \hspace{.5cm} (1-z_m)^{d_{\ca{C}_m}(E)} \ge \left(\frac14\right)^{t\left(n^{(m+2)\alpha-(m-1)\delta}\right)}.$$

In the right-hand side of each of these inequalities, the power of $n$ is negative, since $\alpha< \delta/4$ implies $2\alpha < \delta$, $(2m-1)\alpha < (2m-2)\delta$ for all $\cf{k}{2}\le m \le \ff{\ell}{2}$, and $(m+2)\alpha < (m-1)\delta$ for all $2 \le m \le \ff{\ell}{2}$.
Thus, each term of $f(E)$ is greater than or equal to $\left(\frac14\right)^{cn^b}$ for some constants $b<0<c$.
So
\begin{equation*}
    \lim_{n\to \infty}f(E) = 1.
\end{equation*}
Thus for $A \in \ca{A}$,
\[
    xf(A) = n^{-2+3\alpha}f(A) > n^{-2(1-\alpha)} = \bP(A),
\]
for $B_m \in \ca{B}_m$ ($\cf{k}{2}\le m \le \ff{\ell}{2}$),
\[
y_mf(B_m) = n^{-(2m-2)+(2m-1)\alpha}f(B_m) > n^{-(2m-2)(1-\alpha)} = \bP(B_m),
\]
and for $C_m \in \ca{C}_m$ ($2\le m \le \ff{\ell}{2}$),
\[ z_mf(C_m) = n^{-m+(m+1)\alpha}f(C_m) > n^{-m(1-\alpha)} = \bP(C_m),\]
proving (\ref{eqn:ProbBounds}).
This completes the proof of Claim \ref{clm:NoEventAsymLovasz}.
\end{proof}
We claim that a colouring of $L$ from Claim \ref{clm:NoEventAsymLovasz}, combined with the colouring of $G$, is a $(C_{[k,\ell]},3)$-colouring of $K_n$.
Let $k \le h \le \ell$, and let $C$ be an $h$-cycle in $K_n$.
We will show that $C$ receives at least $3$ colours.

If $E(C) \subseteq E(G)$, then we are done by Claim \ref{clm:(Cp,3)-coloured}.
If $E(C) \subseteq E(L)$, then $C$ receives at least $3$ colours because events in $\ca{A}$ and $\bigcup_{m=\lceil k/2 \rceil}^{\lfloor \ell/2 \rfloor}\ca{B}_m$ do not occur ($C$ is properly coloured but not properly $2$-coloured).
If $E(C)$ contains two adjacent edges in $L$ and an edge in $G$, then since events in $\ca{A}$ do not occur, $C$ receives at least $2$ colours in $P$ and at least $1$ colour not in $P$, so it receives at least $3$ colours.

We have reduced to the case that $E(C)\cap E(L)$ is a matching and $C$ receives at most $2$ colours, say $c_1 \notin P$ and $c_2 \in P$, so it suffices to prove that this is impossible.
Suppose for contradiction that it is possible.
Let $m:=|E(C)\cap E(L)|$.
In this case, $C$ can be decomposed into paths and single edges $P_1\bc e_1\bc P_2\bc e_2\bc \ldots \bc P_m\bc e_m$, where $e_i$ is incident with an endvertex $v_i$ of $P_i$ and and an endvertex $u_{i+1}$ of $P_{i+1}$ for all $i$ (taken mod $m$), each $P_i$ is a monochromatic path in colour $c_1$, and each $e_i$ receives the colour $c_2$. So $u_1\bc v_1\bc u_2\bc v_2 \bc \ldots \bc u_m\bc v_m\bc u_1$ is a cycle with alternating colours $c_1,c_2$.
Let $D=x_1\bc y_1\bc x_2\bc y_2 \bc \ldots\bc x_{m'}y_{m'}\bc x_1$ be a shortest cycle with alternating colours $c_1,c_2$ in $K_n$, where $x_1y_1$ receives colour $c_1$.

For each $i\in [m']$, let $F_i$ be the monochromatic copy of $K_{k-1}$ that contains the edge $x_iy_i$.
We claim that the $F_i$ are all distinct (and thus disjoint by Lemma \ref{lem:Cpcolouring}(I)).
Suppose not, and let $1\le i< j\le m'$ such that $F_i = F_j$.
Then since $y_i,x_j \in V(F_i)$, $y_ix_j$ receives colour $c_1$, so $y_i\bc x_{i+1}\bc y_{i+1}\bc \ldots\bc x_{j-1}\bc y_{j-1} \bc x_j \bc y_i$ is a cycle with alternating colours $c_1,c_2$ in $K_n$ of length less than $2m'$, contradicting minimality of $m'$.

This confirms that the event $C_{D,c_2}$ occurs.
But this contradicts Claim \ref{clm:NoEventAsymLovasz}.
Therefore, $K_n$ is $(C_{[k,\ell]},3)$-coloured with $n/(k-2)+n^{1-\alpha} = n/(k-2)+o(n)$ colours, so $r(K_n,C_{[k,\ell]},3) \le n/(k-2)+o(n)$, as desired.
\end{proof}

\subsection{Proof of Lemma~\ref{lem:Cpcolouring}}\label{subsec:techlem}

\begin{proof}[Proof of Lemma~\ref{lem:Cpcolouring}]

Let $\ep = \ep(k) \in (0,\frac{1}{k-2})$.
We will choose $n$ to be sufficiently large and $\delta$ to be sufficiently small compared to $\ep$.

Let $E= E(K_n)$ and $V= V(K_n)$.
We refer to $C:= [\frac{n}{k-2}]$ as the set of colours. We will define the $\binom{k}{2}$-uniform hypergraph $\cH$ as follows.
Let $V(\cH):= E \cup (V\times C)$.
Let 
\[E(\cH):= \{A^{(2)}\cup (A\times \{i\}):A\in V^{(k-1)},\ i \in C\}. \]
There is a natural one-to-one correspondence between $E(\cH)$ and $V^{(k-1)} \times C$, so we will denote an edge $A^{(2)}\cup (A\times \{i\}) \in E(\cH)$ simply by $(A,i)$.

For each matching $\ca{M}$ of $\cH$, let $G(\ca{M})$ be the edge-coloured graph with $V(G)=V(K_n)$, $E(G) = \{e\in E:e\in A^{(2)} \text{ for some }(A,i) \in \ca{M}\}$, and edge-colouring $\phi$ with $e\mapsto i$ for all $(A,i) \in \ca{M}$ and $e \in A^{(2)}$.
Note that by the definition of $\cH$, any $G(\ca{M})$ satisfies condition (I).

Let 
\[d:=\frac{n^{k-2}}{(k-2)!}.\]
Our goal is to apply Theorem \ref{thm:GlockConflictFree} on the hypergraph $\cH$ to obtain a matching $\ca{M} \subseteq E(\cH)$ such that $G(\ca{M})$ satisfies conditions (I)--(IV).
To this end, we will define a $(d,O(1),\ep)$-conflict system $\cC$ such that whenever $\ca{M}$ is $\cC$-free, $G(\ca{M})$ satisfies (II).
We will also define a set $\ca{W}$ of $(d,\ep,\cC)$-trackable test functions such that Theorem \ref{thm:GlockConflictFree} yields a matching $\ca{M}$ for which $G(\ca{M})$ additionally satisfies (III) and (IV).

To apply Theorem \ref{thm:GlockConflictFree}, we must prove that $\cH$ is approximately $d$-regular and bound $\Delta_2(\cH)$.
\begin{claim} \label{clm:Cp_d-regular}
    $d\left(1-d^{-\ep}\right) \le \delta(\cH) \le \Delta(\cH) \le d$ and $\Delta_2(\cH)\le d^{1-\ep}$.
\end{claim}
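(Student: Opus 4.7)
My plan is to compute $d_\cH(v)$ and $\Delta_2(\cH)$ directly from the definition of $\cH$, exploiting the bijection between $E(\cH)$ and $V^{(k-1)}\times C$. The vertices of $\cH$ split naturally into two types---graph edges $e \in E$ and vertex-colour pairs $(v,i) \in V \times C$---so all estimates reduce to a short case analysis over these types.

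For the degree bound, I would observe that a vertex $(v,i)\in V\times C$ has degree $\binom{n-1}{k-2}$, counting $(k-1)$-sets $A\ni v$ with the colour $i$ fixed, while an edge $e\in E$ has degree $\binom{n-2}{k-3}\lfloor n/(k-2)\rfloor$, counting $(k-1)$-sets containing $e$ times the available colours. A routine binomial manipulation shows both quantities equal $d(1-O(n^{-1}))$, so $\Delta(\cH) \le d$ is immediate; and since $\ep<1/(k-2)$ forces $d^{-\ep}=n^{-\ep(k-2)}$ to decay strictly slower than $n^{-1}$, the lower bound $\delta(\cH)\ge d(1-d^{-\ep})$ follows for $n$ large.

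For the codegree bound I would case-split on a pair $\{x_1,x_2\}$ of distinct vertices of $\cH$. If both lie in $E$, any edge $(A,i)$ through them requires $A \supseteq x_1\cup x_2$, yielding at most $\binom{n-3}{k-4}\lfloor n/(k-2)\rfloor=O(n^{k-3})$ (the worst case being when $x_1,x_2$ share a vertex). If $x_1=e\in E$ and $x_2=(v,i)\in V\times C$, the colour is pinned and we only choose $A\supseteq e\cup\{v\}$, giving at most $\binom{n-2}{k-3}=O(n^{k-3})$. If both lie in $V\times C$, the codegree is zero unless the colours agree, in which case it is again $\binom{n-2}{k-3}=O(n^{k-3})$. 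In every case the bound $O(n^{k-3})$ beats $d^{1-\ep}=\Theta(n^{(k-2)(1-\ep)})$ for $n$ large, precisely because $\ep<1/(k-2)$.

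There is no substantive obstacle; once the two vertex types are separated, the proof is mechanical. The only thing to track is that the constraint $\ep<1/(k-2)$ chosen at the start of the proof is exactly the inequality needed to absorb the $O(n^{k-3})$ error into both $d\cdot d^{-\ep}$ (for the degree lower bound) and $d^{1-\ep}$ (for the codegree bound).
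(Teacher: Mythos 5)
Your proposal is correct and follows essentially the same route as the paper: the same exact degree counts $\binom{n-2}{k-3}\lfloor n/(k-2)\rfloor$ and $\binom{n-1}{k-2}$ for the two vertex types, and the same observation that a pair of vertices of $\cH$ fixes at least three of the $k-1$ vertex choices plus the colour, giving a codegree of $O(n^{k-3})\le d^{1-\ep}$ via $\ep<\tfrac{1}{k-2}$. Your explicit case split for the codegree is just a spelled-out version of the paper's one-line count, so there is nothing substantive to add.
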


\begin{proof}[Proof of Claim]
There are $\binom{n-2}{k-3}\lfloor\frac{n}{k-2}\rfloor$ edges in $\cH$ containing a fixed $e \in E$. There are $\binom{n-1}{k-2}$ edges in $\cH$ containing a fixed $(v,i) \in V\times C$. So for any $u \in V(\cH)$, as $\ep \in (0,\frac{1}{k-2})$, we have $d\left(1-d^{-\ep}\right) \le d(u) \le d$, as required.

Now let $u,v \in V(\cH)$ be distinct vertices contained in an edge. There are at most $n^{k-3}$ ways to extend such a pair to an edge in $\cH$. Indeed, to pick an edge we must choose $k-1$ vertices and a colour, and the pair $u,v$ fixes at least 3 of these choices. As $\ep \in (0,\frac{1}{k-2})$, we have $n^{k-3} \le d^{1-\ep},$ as required.
\end{proof}

We next define a conflict system $\cC$ for $\cH$.
We set $V(\cC) := E(\cH)$.
Let $E(\cC)$ be the set of edges that form a $2m$-cycle in $\cH$ in alternating colours for some $2\le m\le \ff{\ell}{2}$.
More precisely, given $2\le m \le \ff{\ell}{2}$, distinct vertices $v_1,\ldots,v_m,u_1,\ldots,u_m$, two distinct colours $i,j \in C$, and $A_1,\ldots,A_m \in V^{(k-1)}$ and $B_1,\ldots,B_m \in V^{(k-1)}$ with $v_i \in A_i \cap B_i$ and $u_i \in B_i \cap A_{i+1}$ for all $i \in [m]$ (setting $A_{m+1}:= A_1$), say $S:=\{(A_1,i),(B_1,j),(A_2,i),(B_2,j),\ldots,(A_m,i),(B_m,j)\}$ is in $E(\cC)$ whenever $S$ is a matching in $\cH$.

Note that for any $\cC$-free matching $\ca{M}$ in $\cH$, condition (II) holds for $G(\ca{M})$.
Indeed, any cycle in the hypergraph from (II) must have alternating colours because of condition (I), and any cycle with alternating colours of length at most $2\ff{\ell}{2}$ forms an edge in $\cC$.

\begin{claim} \label{clm:Cp_bounded_conflict_system}
    $\cC$ is a $(d,O(1),\ep)$-bounded conflict system for $\cH$.
\end{claim}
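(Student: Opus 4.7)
The plan is to verify each of conditions~(C1), (C2), (C3) in Definition~\ref{def:(d,l,ep)-bounded} with a boundedness parameter $\ell'$ that depends only on $\ell$ and $k$. Condition~(C1) is immediate: every edge of $\cC$ has the form $\{(A_1,i),(B_1,j),\ldots,(A_m,i),(B_m,j)\}$ for some $2 \le m \le \ff{\ell}{2}$, so its size is $2m \in \{4,6,\ldots,2\ff{\ell}{2}\}$, which lies in $[3,\ell']$ for any $\ell' \ge \ell$.

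For (C2), I will bound $\Delta(\cC^{(2m)})$ by counting $2m$-cycles through a fixed hyperedge $(A_0,c_0) \in E(\cH)$. After fixing the position of $(A_0,c_0)$ in the cycle (at most $2m$ choices) and the other cycle colour (at most $|C| = O(n)$ choices), the remaining $(k-1)$-sets are chosen one at a time along the cycle. Each intermediate $(k-1)$-set must contain a specified vertex shared with its predecessor (a constant number of choices for the shared vertex), and the remaining $k-2$ vertices can be chosen in $\binom{n-(k-1)}{k-2} = O(n^{k-2})$ ways. The final $(k-1)$-set has the additional constraint of closing the cycle back to $A_0$ through a specified vertex of $A_0$, reducing its count to $O(n^{k-3})$. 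Multiplying gives $O(n) \cdot O(n^{(k-2)(2m-2)}) \cdot O(n^{k-3}) = O(n^{(k-2)(2m-1)}) = O(d^{2m-1})$, which verifies (C2) for $\ell'$ sufficiently large.

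For (C3), fix a subset $T \subseteq E(\cH)$ with $2 \le j := |T| \le 2m-1$ and count the $2m$-edges of $\cC$ containing $T$. The positions of the hyperedges of $T$ partition the cycle into $j$ arcs. A positive-length arc of length $a$ admits $O(n^{(k-2)a - 1})$ completions---each of its $a-1$ intermediate steps costs $O(n^{k-2})$, and the final step is subject to two endpoint constraints, costing only $O(n^{k-3})$---while a length-$0$ arc merely imposes a compatibility condition on the two bordering members of $T$. I split into two cases. If $T$ uses both colours of the cycle, both colours are determined, and since $j < 2m$ at least one arc is nonempty, so the total count is $O(n^{(k-2)(2m-j) - 1})$. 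If $T$ uses only one colour, then the alternating structure forces every one of the $j$ arcs to have length at least $1$, and choosing the free second colour costs $O(n)$; multiplying gives $O(n) \cdot O(n^{(k-2)(2m-j) - j}) = O(n^{(k-2)(2m-j) - j + 1})$, which is at most $O(n^{(k-2)(2m-j)-1})$ when $j \ge 2$. In either case the count is $O(n^{(k-2)(2m-j) - 1}) = O(d^{2m - j - 1/(k-2)})$, and since $\ep < 1/(k-2)$ was fixed at the start of the proof, this is at most $d^{2m-j-\ep}$ for all sufficiently large $n$.

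The main technical hurdle is the case analysis in (C3): one has to carefully track where the saving in the exponent comes from (a closing constraint at the end of a nonempty arc, versus the cost of an otherwise free colour choice) and show that at least one ``unit'' of saving is always available. The slack $1/(k-2)$ is exactly the margin guaranteed by the initial choice $\ep \in (0,1/(k-2))$, so the case analysis is tight but goes through.
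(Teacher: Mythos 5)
Your proof is correct and follows essentially the same strategy as the paper: the same sequential/arc-based counting giving $O(d^{2m-1})$ for (C2), and for (C3) the same two-case analysis (whether $T$ determines both cycle colours or only one), with the saving coming either from a hyperedge constrained at both ends or from the $\ge 2$ such constraints offsetting the $O(n)$ free colour choice, yielding $O(n^{(k-2)(2m-j)-1}) \le d^{2m-j-\ep}$ exactly as in the paper.
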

\begin{proof}[Proof of Claim]
(C1) holds trivially since $|S| \le \ell = O(1)$ for all $S \in E(\cC)$.

We will first prove (C2).
If we fix $2\le m \le \ff{\ell}{2}$ and $(A,i) \in E(\cH)$, all edges in $\cC^{(2m)}$ containing $(A,i)$ correspond to cycles of the form $(A,i)\bc(B_1,j)\bc(A_1,i)\bc(B_2,j)\bc\ldots\bc(A_{m-1},i)\bc(B_m,j)$, and there are $O(n)$ choices for $j$, $O(n^{k-2})$ choices for each of $B_t\setminus A_{t-1}$ and $A_t\setminus B_t$ for each $1 \le t \le m-1$ (where $A_0=A$), and $O(n^{k-3})$ choices for $B_m\setminus (A_{m-1}\cup A)$, for a total of at most
\[ O(n\cdot n^{2(m-1)(k-2)}n^{k-3}) = O(n^{(k-2)(2m-1)}) = O(d^{2m-1}) \]
choices; this proves (C2). 

We will now prove (C3).
Let $2 \le m \le \ff{\ell}{2}$. 
To show $\Delta_{t}(\cC^{(2m)}) \le d^{2m-t-\ep}$ for all $2\le t < 2m$, let $T = \{(A_1,i)\bc\ldots\bc(A_a,i)\bc(B_1,j)\bc\ldots\bc(B_b,j)\} \subseteq E(\cH)$ with $a+b = t$ and $a\ge b$; we will count the ways to extend $T$ to a cycle $S$ in $E(\cC^{(2m)})$. We will consider the vertices sequentially around the cycle starting at $(A_1,i) \in T$, making choices for the vertices that are not in $T$. First choose the positions in $S$ of $T \setminus \{(A_1,i)\}$. There are $O(1)$ ways to do this. Let $(S_1,c_1),(S_2,c_2), (S_3,c_1)$ be a subpath of this sequential ordering. As $|S_1 \cap S_2| = 1$, given $S_1$ there are $O(n^{k-2})$ choices for $S_2$. In fact, if $(S_3,c_1) \in T$, then there are $O(n^{k-3})$ choices for $S_2$. In this second case, call $S_2$ restricted. Letting $r$ be the number of restricted set choices, there are $O(n^{(k-2)(2m - t) - r})$ ways to choose the set components of vertices in $S$, given $T$. Note that if $b=0$, then $r \ge 2$ as $|T| \ge 2$. In this case we must also choose the second colour for the cycle; there are $O(n)$ ways to do this. If $b > 0$, then both colours are determined by $T$ and $r \ge 1$. In either case, as $\ep \in (0,\frac{1}{k-2})$, we have at most
$$O(n^{(k-2)(2m - t) - 1}) < d^{2m - t - \ep}$$ edges in $\cC^{(2m)}$ containing $T$.

Therefore, $\cC$ is a $(d,O(1),\ep)$-bounded conflict system for $\cH$, proving Claim \ref{clm:Cp_bounded_conflict_system}.
\end{proof}

We will next define a set of test functions. For each $v \in V$, let $S_v$ be the set of edges in $E$ incident to $v$.
Let $w_v:E(\cH)\to [0,k-2]$ be the $1$-uniform test function defined by $S\mapsto |S \cap S_v|$.

\begin{claim} \label{clm:Cp_wv_trackable}
    $w_v$ is $(d,\ep,\cC)$-trackable for all $v \in V$.
\end{claim}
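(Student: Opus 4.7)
The plan is to verify each of (W1)--(W4) for the $1$-uniform test function $w_v$, and to observe that three of the four conditions are essentially vacuous given the setup, leaving (W1) as the only substantive computation.

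First I would dispense with (W2), (W3), (W4) one by one. Since $w_v$ has uniformity $j=1$, condition (W2) requires a bound for every $j' \in [j-1] = \emptyset$, so it holds trivially. For (W3), the set $\{S \in E(\cH)^{(1)} : e, f \in S\}$ is empty whenever $e \neq f$, because the members of $E(\cH)^{(1)} $ are singletons, so the weight hypothesis is never triggered and (W3) is vacuous. For (W4), every $S \in E(\cH)^{(1)}$ is a singleton of size $1$, whereas every $R \in E(\cC)$ satisfies $|R| \ge 3$ by (C1); hence every singleton is automatically $\cC$-free and (W4) imposes no constraint.

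The remaining condition (W1) requires $w_v(\cH) \ge d^{1+\ep}$. I would compute $w_v(\cH)$ directly from the definition. For an edge $(A,i) \in E(\cH)$, the contribution $w_v(\{(A,i)\}) = |A^{(2)} \cap S_v|$ equals $k-2$ when $v \in A$ and $0$ otherwise, because $v$ lies in exactly $k-2$ of the $\binom{k-1}{2}$ pair-edges of $A^{(2)}$. Summing over edges of $\cH$ that contain $v$ gives
\[ w_v(\cH) \;=\; (k-2)\,\binom{n-1}{k-2}\,\left\lfloor\tfrac{n}{k-2}\right\rfloor \;=\; \Theta(n^{k-1}). \]
On the other hand, $d^{1+\ep} = \bigl(n^{k-2}/(k-2)!\bigr)^{1+\ep} = \Theta\!\left(n^{(k-2)(1+\ep)}\right)$, and the exponent satisfies $(k-2)(1+\ep) < k-1$ because $\ep < 1/(k-2)$. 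Thus $w_v(\cH)/d^{1+\ep}\to \infty$ as $n\to\infty$, establishing (W1) for all sufficiently large $n$.

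There is no real obstacle here: the only point that requires any care is matching the choice of $\ep$ at the very start of the proof of Lemma~\ref{lem:Cpcolouring} (the hypothesis $\ep \in (0, 1/(k-2))$) to the exponent comparison in (W1), and verifying that the constant factor $(k-2)/(k-2)!^{1+\ep}$ is harmless for large $n$. Everything else reduces to observing that singletons carry no conflict structure and cannot simultaneously contain two distinct elements of $E(\cH)$.
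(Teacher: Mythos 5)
Your proposal is correct and follows essentially the same route as the paper: (W1) by a direct count of $w_v(\cH)=\Theta(n^{k-1})$ compared against $d^{1+\ep}$ using $\ep<1/(k-2)$, and (W2)--(W4) dismissed as trivial consequences of $1$-uniformity (your explicit note that (W4) also uses $|R|\ge 3$ from (C1) is a slightly more careful phrasing of the same point). No gaps.
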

\begin{proof}[Proof of Claim]
Note that
\[ w_v(\cH) = \sum_{S\in E(\cH)}|S \cap S_v| = \sum_{e \in S_v}d_\cH(e) = nd \pm O\left(n^{k-2}\right) = d^{1+\frac{1}{k-2}} \pm O\left(n^{k-2}\right) > d^{1+\ep},\]
which proves (W1).
Observe that (W2), (W3), and (W4) hold vacuously because $w_v$ is $1$-uniform.
This proves the claim.
\end{proof}

For each $2\le m \le \ff{\ell}{2}$ and $uv \in E(K_n)$, define
\begin{align*}
    \ca{P}_{u,v,m}:=&\{\{(A_1,i),\ldots,(A_m,i)\}\in E(\cH)^{(m)}:i \in C, 
    A_t\cap A_s = \emptyset\ \forall\ t\neq s,\ u \in A_1,\ v \in A_m\}.
\end{align*}
Then by choosing $A_1\setminus \{u\}$ and $A_m\setminus \{v\}$, then $\{A_2,\ldots,A_{m-1}\}$, then $i$, we have
\begin{align} \label{eqn:Puvm}
   |\ca{P}_{u,v,m}| &= \binom{n}{k-2}^2\frac{1}{(m-2)!}\binom{n}{k-1}^{m-2}\frac{n}{k-2}\pm O(n^{mk-m-2}) \nonumber \\
    %&= \frac{n^{(k-2)m+m-1}}{(m-2)!(k-2)(k-2)!^2(k-1)!^{m-2}}\pm O(n^{mk-m-2}) \nonumber \\
    &= \frac{d^mn^{m-1}}{(m-2)!(k-2)(k-1)^{m-2}}\pm O(n^{mk-m-2}).
\end{align}
Let $w_{u,v,m}$ be the indicator function defined by 
\[w_{u,v,m}(S)=\begin{cases}1, & S \in \ca{P}_{u,v,m}, \\ 0, & S \notin \ca{P}_{u,v,m}\end{cases}\]
for all matchings $S \in E(\cH)^{(m)}$, extended to all subsets of $E(\cH)$ as described in Section \ref{sec:forbidden}.
Note that $w_{u,v,m}$ is an $m$-uniform test function.
\begin{claim} \label{clm:Cp_wuvm_trackable}
    For all $uv \in E(K_n)$ and $2 \le m \le \ff{\ell}{2}$, $w_{u,v,m}$ is $(d,\ep,\cC)$-trackable.
\end{claim}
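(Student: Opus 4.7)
The plan is to verify (W1)--(W4) in turn for the $m$-uniform indicator test function $w = w_{u,v,m}$, using throughout $d = \Theta(n^{k-2})$ and $\ep < 1/(k-2)$ (so $d^\ep < n$). For (W1), formula (\ref{eqn:Puvm}) immediately gives $w(\cH) = |\ca{P}_{u,v,m}| = \Theta(d^m n^{m-1}) \ge d^m n^{(k-2)\ep} = d^{m+\ep}$ using $m \ge 2$. For (W4), every $S \in \ca{P}_{u,v,m}$ is monochromatic, while every $\cC$-edge encodes an alternating-colour cycle and so uses two colours; no $\cC$-edge can sit inside such an $S$, so $w$ is supported only on $\cC$-free sets.

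For (W2), fix $S' \in E(\cH)^{(j')}$ with $j' \in [m-1]$. The count $w(\{S \in E(\cH)^{(m)} : S \supseteq S'\})$ vanishes unless $S'$'s edges all share one colour, have pairwise disjoint $V$-sets, and do not put both $u$ and $v$ in a single edge. Under these conditions I count extensions of $S'$ to members of $\ca{P}_{u,v,m}$ by a case analysis on how many of $u, v$ already lie in $S'$. The dominant case is when both are already present (in different edges, forcing $j' \ge 2$), yielding $\Theta(d^{m-j'} n^{m-j'})$; comparing with $w(\cH)/d^{j'+\ep} = \Theta(d^{m-j'-\ep} n^{m-1})$ reduces to $d^\ep \le n^{j'-1}$, which holds since $d^\ep < n$ and $j' \ge 2$. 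When $j' = 1$, the ``both in one edge'' case has zero extensions, so the effective worst case becomes $\Theta(d^{m-1} n^{m-2})$, again within $\Theta(d^{m-1-\ep} n^{m-1})$.

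Condition (W3) is the main obstacle. The hypothesis $w_{u,v,m}(\{S : e, f \in S\}) > 0$ forces $e = (A,i)$, $f = (A'',i)$ with $A \cap A'' = \emptyset$. Since every $\cC$-edge has even size $2m'$ for some $2 \le m' \le \lfloor \ell/2 \rfloor$, the intersection $(\cC_e)^{(j')} \cap (\cC_f)^{(j')}$ is empty unless $j' = 2m'-1$. The central structural fact used to tame the enumeration is that consecutive $V$-sets in any alternating-colour $\cH$-cycle intersect in exactly two vertices of $V$---otherwise they would share more than one hypergraph vertex---so the cyclic arrangement of any such cycle is determined by the edge set (up to reflection). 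I then parameterize each $T$ in the intersection by: (i) picking the alternate colour ($O(n)$ choices); (ii) picking the two $V$-sets $B_1, B_{m'}$ of the alternate colour adjacent to $A$ in $T \cup \{e\}$ and to $A''$ in $T \cup \{f\}$, each of which must contain three pairwise disjoint 2-subsets (inside $A$, inside $A''$, and inside the neighbouring middle set), contributing $O(n^{k-7})$ each; (iii) picking the middle chain $A_2, B_2, \ldots, A_{m'}$ of $2m' - 3$ sets one at a time by forced 2-subset overlaps, contributing $O(n^{(k-3)(2m'-3)})$. Crucially, the disjointness $A \cap A'' = \emptyset$ forces the 2-subsets of $B_1, B_{m'}$ coming from $A$ and $A''$ to be disjoint, which is exactly what supplies the key saving of $n^{-4}$ relative to a generic cycle count. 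Summing gives $|(\cC_e)^{(j')} \cap (\cC_f)^{(j')}| = O(n^{(2m'-1)(k-3) - 7})$, comfortably below $d^{j'-\ep} = n^{(k-2)(2m'-1) - (k-2)\ep}$ since the gap $2m' + 6 - (k-2)\ep$ is positive for small $\ep$. The most delicate point is the cyclic-arrangement rigidity; once that is in place, the enumeration is routine.
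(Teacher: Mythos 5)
Your verifications of (W1), (W2) and (W4) are fine (your (W2) case analysis is a finer-grained version of the paper's count, and your monochromaticity argument for (W4) is correct), but your treatment of (W3) -- which you rightly call the main obstacle -- rests on a false structural premise about the conflict system. You assert that consecutive $(k-1)$-sets in an alternating-colour conflict cycle ``intersect in exactly two vertices of $V$, otherwise they would share more than one hypergraph vertex.'' This has it exactly backwards. In the paper's formal definition of $E(\cC)$, the set $S$ is required to be a \emph{matching} in $\cH$; since two $\cH$-edges $(A,i)$ and $(B,j)$ with $i\neq j$ intersect in the $\cH$-vertices $(A\cap B)^{(2)}$, the matching condition forces $|A\cap B|\le 1$, and the prescribed vertices $v_t\in A_t\cap B_t$, $u_t\in B_t\cap A_{t+1}$ then give $|A_t\cap B_t|=|B_t\cap A_{t+1}|=1$. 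So consecutive cliques in a conflict meet in exactly \emph{one} vertex of $K_n$; meeting in two vertices is precisely what the matching condition forbids (they would then share a pair-type vertex of $\cH$). A quick sanity check exposes the problem: for $k\le 6$ a $(k-1)$-set cannot contain three disjoint $2$-subsets, so your parameterization would make $(\cC_e)^{(j')}\cap(\cC_f)^{(j')}$ empty, e.g.\ already for $k=3,4$, which is clearly not the case.

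Because of this, your enumeration (forced $2$-subset overlaps along the chain, $O(n^{k-7})$ choices for each of $B_1,B_{m'}$, and the claimed rigidity of the cyclic arrangement) counts a family that is not the actual intersection and misses essentially all of its genuine elements; the bound $O(n^{(2m'-1)(k-3)-7})$ you derive is an artifact of the wrong structure and does not bound the true quantity. The correct count, as in the paper, is: fix $i$ (forced), choose the second colour $j$ ($O(n)$ ways), choose $A_2,\ldots,A_{m'}$ freely ($O(n^{k-1})$ each), choose the interior $B_t$ ($2\le t\le m'-1$) with two vertices fixed by their neighbours ($O(n^{k-3})$ each), and choose $B_1,B_{m'}$ with \emph{three} fixed vertices each -- one in $A$, one in $A'$ (these are distinct since $A\cap A'=\emptyset$), and one in the adjacent $A_t$ -- giving $O(n^{k-4})$ each. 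This yields $|(\cC_e)^{(j')}\cap(\cC_f)^{(j')}| = O\bigl(n^{(k-2)(2m'-1)-2}\bigr)$, which is at most $d^{j'-\ep}$ because $\ep<\frac{1}{k-2}<\frac{2}{k-2}$; the genuine saving over a generic path count is a factor of $n^{-2}$ (from the extra fixed vertex in each end set), not the $n^{-4}$ you describe. With the intersection pattern corrected, the rest of your outline goes through, but as written the proof of (W3) has a genuine gap.
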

\begin{proof}[Proof of Claim]

Take any $uv \in E$ and $2 \le m \le \ff{\ell}{2}$.
We will show that $w_{u,v,m}$ is $(d,\ep,\cC)$-trackable.
By (\ref{eqn:Puvm}), 
\[w_{u,v,m}(\cH) = |\ca{P}_{u,v,m}| = \Theta\left(d^{m+\frac{m-1}{k-2}}\right) > d^{m+\ep},\]
proving (W1).

To see (W2), take any $j' \in [m-1]$ and $S' \in E(\cH)^{(j')}$, and note that when we count the number of $S \in \ca{P}_{u,v,m}$ that contain $S'$, we choose at least $j'(k-2)$ fewer vertices and do not choose a colour, so
\[w_{u,v,m}(\{S \in E(\cH)^{(m)}:S \supseteq S'\}) = O\left(\frac{|\ca{P}_{u,v,m}|}{n^{j'(k-2)+1}}\right) = O\left(\frac{w_{u,v,m}(\cH)}{d^{j'+\frac{1}{k-2}}}\right) < \frac{w_{u,v,m}(\cH)}{d^{j'+\ep}}. \]
To see (W3), first recall that for $e\in E(\cH)$, $\cC_e = \{S\setminus \{e\}: e \in S \in E(\cC)\}$.
Take any distinct $e,f \in E(\cH)$ with $w_{u,v,m}(\{S \in E(\cH)^{(m)}:e,f \in S\})>0$. Then $e= (A,i)$ and $f=(A',i)$ for some disjoint $A,A' \in V^{(k-1)}$ and $i \in C$.
Since sizes of edges in $\cC$ are even numbers ranging from $4$ to $\ell$, sizes of elements of $\cC_e$ and $\cC_f$ are odd numbers ranging from $3$ to $\ell-1$, so take $2 \le m' \le \ff{\ell}{2}$ and let $j'=2m'-1$.

So any $S \in \cC_e\cap \cC_f$ is a ``path" $\{(B_1,j)\bc(A_2,i)\bc(B_2,j)\bc \ldots\bc (A_{m'},i)\bc(B_{m'},j)\}$, where $(A,i)\bc(B_1,j)\bc(A_2,i)\bc \ldots\bc (B_{m'},j)$ and $(A',i)\bc(B_1,j)\bc(A_2,i)\bc\ldots\bc(B_{m'},j)$ are both cycles in $\cH$.
In this case, two vertices in each of $B_1,B_{m'}$ are fixed by intersection with $A$ and $A'$, and $i$ is fixed.
So choosing $j$, the $A_t$ ($2\le t \le m'$), the $B_t$ ($2\le t\le m'-1$), and then $B_1,B_{m'}$, since $\ep< \frac{2}{k-2}$, we have
$$|\cC_e^{(j')} \cap \cC_f^{(j')}| = O\left(n\cdot n^{(m'-1)(k-1)}n^{(m'-2)(k-3)}n^{2(k-4)}\right)  = O\left(n^{(k-2)(2m'-1-\frac{2}{k-2})}\right) \le d^{j'-\ep},$$
proving (W3).

(W4) holds vacuously because $m<2m$.
Therefore, $w_{u,v,m}$ is $(d,\ep,\cC)$-trackable.
\end{proof}

For each $2\le m \le \ff{\ell}{2}$ and $uv \in E(K_n)$, define 
\begin{align*}
    \ca{T}_{u,v,m}:=& \{\{(A_1,i),\ldots,(A_m,i),(B,j)\}\in E(\cH)^{(m+1)}:\{(A_1,i),\ldots,(A_m,i)\}\in \ca{P}_{u,v,m}, \\
    &j\neq i,\ u \in A_1\setminus B,\ v\in A_m\setminus B,\ |B\cap A_t| \le 1\, \forall t \in [m],\ |B\cap A_1|=|B\cap A_2|=1\}.
\end{align*}
Let 
\[a := \begin{cases} (m-2)(k-1)(k-2), & m\ge 3, \\ (k-2)^2, & m=2.\end{cases}\]
Note that every set in $\ca{T}_{u,v,m}$ contains exactly one set in $\ca{P}_{u,v,m}$.
When counting sets $S\cup\{(B,j)\} \in \ca{T}_{u,v,m}$ for fixed $S\in \ca{P}_{u,v,m}$, we choose $(A_2,i) \in S\setminus \{(A_1,i),(A_m,i)\}$ if $m \ge 3$, and we choose $B\cap A_1$, $B\cap A_2$, $B\setminus (A_1\cup A_2)$, and $j \in C\setminus \{i\}$.

So we have
\begin{align} \label{eqn:Tuvm}
    |\ca{T}_{u,v,m}| &= \left(a\binom{n}{k-3}\frac{n}{k-2} \pm O(n^{k-3})\right)|\ca{P}_{u,v,m}| \nonumber \\
    %&= \frac{an^{k-2}}{(k-2)!}\frac{d^mn^{m-1}}{(m-2)!(k-2)(k-1)^{m-2}}\pm O(n^{mk-m+k-4}) \nonumber \\
    &= \frac{ad^{m+1}n^{m-1}}{(m-2)!(k-2)(k-1)^{m-2}}\pm O(n^{mk-m+k-4}). 
    % &= \Theta\left(d^{m+1+\frac{m-1}{p-2}}\right)>d^{m+1+\frac{1}{p-1}}.
\end{align}
Let $w_{u,v,m}'$ be the indicator function defined by $$w_{u,v,m}'(S)=\begin{cases}1, & S \in \ca{T}_{u,v,m}, \\ 0, & S \notin \ca{T}_{u,v,m},\end{cases}$$ for all matchings $S \in E(\cH)^{(m+1)}$, extended to all subsets of $E(\cH)$.
Note that $w_{u,v,m}'$ is an $(m+1)$-uniform test function.
\begin{claim} \label{clm:Cp_wuvm'_trackable}
    For all $uv \in E(K_n)$ and $2 \le m \le \ff{\ell}{2}$, $w_{u,v,m}'$ is $(d,\ep,\cC)$-trackable.
\end{claim}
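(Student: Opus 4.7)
The plan is to verify each of the four axioms (W1)--(W4) for the $(m+1)$-uniform test function $w_{u,v,m}'$, mirroring the verification for $w_{u,v,m}$ carried out in Claim~\ref{clm:Cp_wuvm_trackable}. The new feature is that each $S \in \ca{T}_{u,v,m}$ uses two distinct colours rather than one, and this affects (W3) in a nontrivial way.

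The easy axioms (W1) and (W4) I would dispatch first. For (W1), substituting $a = \Theta(1)$ and $d = \Theta(n^{k-2})$ into (\ref{eqn:Tuvm}) gives $w_{u,v,m}'(\cH) = |\ca{T}_{u,v,m}| = \Theta(d^{m+1}n^{m-1}) = \Theta\bigl(d^{(m+1)+(m-1)/(k-2)}\bigr)$, and since $m \ge 2$ and $\ep < 1/(k-2)$ this exceeds $d^{m+1+\ep}$ for all large $n$. For (W4), every $S \in \ca{T}_{u,v,m}$ uses $m$ edges of colour $i$ but only a single edge $(B,j)$ of colour $j \ne i$, while every $\cC$-edge is an alternating $2m'$-cycle containing $m' \ge 2$ edges of each of its two colours; hence no $\cC$-edge is a subset of $S$, so $w_{u,v,m}'$ already vanishes outside the $\cC$-free sets.

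For (W2), fix $j' \in [m]$ and $S' \in E(\cH)^{(j')}$. I may assume some $S \in \ca{T}_{u,v,m}$ contains $S'$, so the edges of $S'$ occupy $j'$ distinct slots among $A_1,\ldots,A_m,B$. Each edge of $S'$ determines its $k-1$ vertices and a colour, which restricts the choices in the remaining slots by a factor of at least $\Theta(n^{k-2})$ per fixed edge. A short case analysis on which of the special slots $A_1, A_2, A_m, B$ appear in $S'$, parallel to the corresponding step of Claim~\ref{clm:Cp_wuvm_trackable}, gives
\[
w_{u,v,m}'\bigl(\{S \in E(\cH)^{(m+1)} : S \supseteq S'\}\bigr) = O\!\left(\frac{|\ca{T}_{u,v,m}|}{n^{j'(k-2)+1}}\right) < \frac{w_{u,v,m}'(\cH)}{d^{j'+\ep}},
\]
where the final inequality uses $\ep < 1/(k-2)$.

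The main obstacle is (W3), because a pair $e,f$ of edges appearing together in some $S \in \ca{T}_{u,v,m}$ may now carry different colours. I would split into two cases. In the same-colour case, $e = (A,i)$ and $f = (A',i)$ with $A \cap A' = \emptyset$, so the bound $|(\cC_e)^{(j')} \cap (\cC_f)^{(j')}| \le d^{j'-\ep}$ follows verbatim from Claim~\ref{clm:Cp_wuvm_trackable}. In the different-colour case, $e = (A,i)$ and $f = (B,j)$ with $i \ne j$, and I would show that the intersection is empty. Indeed, any $T$ in the intersection has $|T| = j' = 2m'-1$ for some $2 \le m' \le \lfloor \ell/2 \rfloor$, with $T \cup \{e\}$ an alternating $2m'$-cycle in colours $\{i,\beta\}$ and $T \cup \{f\}$ an alternating $2m'$-cycle in colours $\{j,\beta'\}$. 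The first cycle forces $T$ to contain exactly $m'-1$ edges of colour $i$, while the second forces either $0$ or $m'$ such edges according as $\beta' \ne i$ or $\beta' = i$; neither equals $m'-1$ when $m' \ge 2$, giving the required contradiction. For even $j'$, or $j' = 1$, the intersection is empty for trivial size reasons, so (W3) holds in all cases and the proof is complete.
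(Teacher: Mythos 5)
Your proposal is correct and follows essentially the same route as the paper: (W1) from the count \eqref{eqn:Tuvm}, (W2) by a savings count of roughly $n^{j'(k-2)+1}$, (W3) by splitting into the same-colour case (identical to Claim~\ref{clm:Cp_wuvm_trackable}) and the different-colour case, where the alternating-colour parity argument shows $\cC_e\cap\cC_f$ is empty. Your colour-counting justification of (W4) (only one edge of the second colour, so no alternating cycle can be contained) is a slightly more careful variant of the paper's terse dismissal, but it is not a different approach.
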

\begin{proof}[Proof of Claim]
Take any $uv \in E$ and $2 \le m \le \ff{\ell}{2}$.
We will show that $w_{u,v,m}'$ is $(d,\ep,\cC)$-trackable.
By (\ref{eqn:Tuvm}),
\[ w_{u,v,m}'(\cH) = |\ca{T}_{u,v,m}| = \Omega\left(d^{m+1+\frac{m-1}{k-2}}\right) > d^{m+1+\ep},\]
proving (W1).

To see (W2), take any $j' \in [m]$ and $S' \in E(\cH)^{(j')}$.
If every edge in $S'$ receives the same colour, then when we count the number of $S \in \ca{T}_{u,v,m}$ that contain $S'$, we choose at least $j'(k-2)$ fewer vertices and one fewer colour. If an edge in $S'$ receives a different colour from the rest, then we choose at least $(j'-1)(k-2)+k-3$ fewer vertices and two fewer colours. In either case, 
\[w_{u,v,m}'(\{S \in E(\cH)^{(m+1)}:S \supseteq S'\}) =  O\left(\frac{w_{u,v,m}'(\cH)}{d^{j'+\frac{1}{k-2}}}\right) < \frac{w_{u,v,m}'(\cH)}{d^{j'+\ep}},\]
so (W2) holds.

To see (W3), take any distinct $e,f \in E(\cH)$ with $w_{u,v,m}'(\{S \in E(\cH)^{(m)}:e,f \in S\})>0$, take $2 \le m' \le \ff{\ell}{2}$, and let $j'=2m'$.
The case in which $e = (A,i)$ and $f=(A',i)$ for some disjoint $A,A' \in V^{(k-1)}$ and $i \in C$ follows exactly the same as in Claim \ref{clm:Cp_wuvm_trackable}.

We claim that the other case, where $e=(A,i)$ and $f=(A',j)$ for $A,A' \in V^{(k-1)}$ and $i \neq j$, cannot occur.
Suppose otherwise, and take any $S \in \cC_e \cap \cC_f$ with $|S| = 2m'-1$.
Then since colours alternate on edges in $\cC$, the edge $S \cup \{e\} \in E(\cC)$ has exactly $m'$ edges of colour $i$, so $S$ has exactly $m'-1$ edges of colour $i$.
But the edge $S\cup \{f\} \in E(\cC)$ has exactly $m'$ edges of colour $i$, so $S$ has exactly $m'$ edges of colour $i$, a contradiction.
This proves (W3).

(W4) holds vacuously because $m+1<2m$.
Therefore, $w_{u,v,m}'$ is $(d,\ep,\cC)$-trackable.
\end{proof}
Let $\ca{W}$ be the collection of test functions consisting of all defined $w_v$, $w_{u,v,m}$, and $w'_{u,v,m}$. By Claims~\ref{clm:Cp_wv_trackable}, \ref{clm:Cp_wuvm_trackable} and \ref{clm:Cp_wuvm'_trackable}, every function in $\ca{W}$ is  $(d,\ep,\cC)$-trackable. Using Claims \ref{clm:Cp_d-regular} and \ref{clm:Cp_bounded_conflict_system}, apply Theorem \ref{thm:GlockConflictFree} to obtain a matching $\ca{M}$ satisfying (I) and (II).

In addition, because $\delta$ can be chosen to be arbitrarily small compared to $\ep$, for each $v \in V$,
\[ d_{G(\ca{M})}(v) = w_v(\ca{M}) \ge (1-d^{-\ep^3})d^{-1}w_v(\cH) > (1-n^{-\delta})n = n-n^{1-\delta},\]
where the first equality follows from the fact that $\ca{M}$ is a matching.
Therefore, we have $\Delta(K_n-E(G(\ca{M}))) \le n^{1-\delta}$, which gives (III).

Using (\ref{eqn:Puvm}) and (\ref{eqn:Tuvm}), we also obtain
\begin{equation*}
    |\ca{M}^{(m)}\cap \ca{P}_{u,v,m}| = w_{u,v,m}(\ca{M}) 
    \le (1+d^{-\ep^3})d^{-m}|\ca{P}_{u,v,m}|
    % &\le (1+n^{-2\delta})d^{-m}\frac{d^mn^{m-1}}{(p-2)(p-1)^{m-2}} \nonumber \\
    \le \frac{(1+n^{-m\delta})n^{m-1}}{(m-2)!(k-2)(k-1)^{m-2}}
\end{equation*}
and 
\begin{equation}
    |\ca{M}^{(m)}\cap \ca{T}_{u,v,m}| = w_{u,v,m}'(\ca{M})
    \ge (1-d^{-\ep^3})d^{-(m+1)}|\ca{T}_{u,v,m}| \nonumber
    % &\le (1+n^{-2\delta})\frac{n^{-(m+1)(p-2)}}{(p-2)!^{m+1}}\cdot \frac{n^{(m+1)(p-2)-1}}{(p-2)!^4(p-1)!^{m-3}} \nonumber \\
    \ge \frac{(1-n^{-m\delta})an^{m-1}}{(m-2)!(k-2)(k-1)^{m-2}}.
\end{equation}
Let $S_{u,v,m}$ be the set of cycles $u\bc v_1\bc u_2\bc v_2 \bc\ldots\bc u_m\bc v \bc u$ such that, setting $u_1=u$ and $v_m=v$, there exist distinct (and thus disjoint) $(A_1,i),\ldots,(A_m,i) \in \ca{M}$ such that $v_tu_t \in A_t$ for all $t\in [m]$.
Let $S_{u,v,m}'$ be the subset of $S_{u,v,m}$ that consists of such cycles with $v_1u_2 \in E(G(\ca{M}))$.
Then (IV) holds for $G(\ca{M})$ if $|S_{u,v,m}\setminus S_{u,v,m}'| \le n^{(m-1)(1-\delta)}$.

To obtain a cycle in $S_{u,v,m}$ from a set in $\ca{P}_{u,v,m}$, choose an ordering for $\{(A_2,i)\bc\ldots\bc(A_{m-1},i)\}$, choose $u_t \in A_t$ for all $2\le t \le m-1$, choose $v_t \in A_t \setminus \{u_t\}$ for all $1\le t \le m-1$ ($u_1=u$), and choose $u_m \in A_t\setminus \{v\}$.
So
$$|S_{u,v,m}| \le |\ca{M}^{(m)}\cap \ca{P}_{u,v,m}|(m-2)!(k-1)^{m-2}(k-2)^m \le (1+n^{-m\delta})(k-2)^{m-1}n^{m-1}.$$
To obtain a cycle in $S_{u,v,m}'$ from a set in $\ca{T}_{u,v,m}$, choose an ordering for $\{(A_3,i),\ldots,(A_{m-1},i)\}$, set $v_1 \in A_1 \cap B$ and $u_2 \in A_2 \cap B$, choose $u_t \in A_t$ for all $3\le t \le m-1$, choose $v_t \in A_t \setminus \{u_t\}$ for all $2\le t \le m-1$, and choose $u_m \in A_m\setminus \{v\}$.
So for $m \ge 3$,
$$|S_{u,v,m}'| \ge |\ca{M}^{(m+1)}\cap \ca{T}_{u,v,m}|(m-3)!(k-1)^{m-3}(k-2)^{m-1}= (1-n^{-m\delta})(k-2)^{m-1}n^{m-1},$$
and for $m=2$,
\[
    |S_{u,v,2}'| \ge |\ca{M}^{(3)}\cap \ca{T}_{u,v,2}| = \frac{(1-n^{-m\delta})(k-2)^2n^{m-1}}{k-2} = (1-n^{-m\delta})(k-2)^{m-1}n^{m-1}.
\]
Therefore, for all $m$,
\[
    |S_{u,v,m}\setminus S_{u,v,m}'| \le 2n^{-m\delta}(k-2)^{m-1}n^{m-1} = O(n^{m-1-m\delta}) < n^{m-1-m\delta+\delta} = n^{(m-1)(1-\delta)}.
\]
This proves (IV) for $G(\ca{M})$.

Therefore, $G:= G(\ca{M})$ is a spanning subgraph of $K_n$ with at most $n/(k-2)$ colours that satisfies (I)--(IV).
This completes the proof of Lemma \ref{lem:Cpcolouring}.
\end{proof}

\section{Proofs of Theorem \ref{thm:r(Kn,n,Cp,3)} and Corollary \ref{cor:r(Kn,n,Cp,3)}} \label{sec:bipartite}
In this section, we prove Theorem \ref{thm:r(Kn,n,Cp,3)} and Corollary \ref{cor:r(Kn,n,Cp,3)}, restated here for convenience. 
\bipartitecycleramsey*
\bipartitecycleramseycor*

For disjoint sets $X$ and $Y$, define $B(X,Y)$ to be the complete bipartite graph with partite sets $X$ and $Y$.

We can obtain the lower bound in Corollary \ref{cor:r(Kn,n,Cp,3)} easily using the bipartite extremal number $\mr{ex}(m,n;H)$, as Axenovich, F\"uredi, and Mubayi \cite{Axenovich} note in the $C_4$ case.
\begin{definition}
    For a bipartite graph $H$, let $\mr{ex}(m,n;H)$ be the maximum number of edges in a subgraph of $K_{m,n}$ that contains no copy of $H$.
\end{definition}
\begin{theorem}[Gy\'arf\'as, Rousseau, Schelp \cite{gyarfas1984extremal}] \label{thm:GyarfasBipartiteExtremal}
    For all $m,n,k \in \bb{N}$ with $m\le n$, we have
    \[ \mr{ex}(m,n;P_{2(k+1)}) = \begin{cases}
        mn, & m \le k \\
        nk, & k<m<2k \\
        (m+n-2k)k, & m \ge 2k.
    \end{cases} \]
\end{theorem}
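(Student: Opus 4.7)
The plan is to prove the theorem in two steps: exhibit extremal constructions matching each case of the formula, then establish the upper bound via a structural lemma combined with induction on $m+n$.

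For the lower bound, note that any $P_{2k+2}$ has $k+1$ vertices on each side of every bipartition, so any bipartite graph with fewer than $k+1$ vertices on one side automatically avoids $P_{2k+2}$. This immediately handles $m \le k$ by taking all of $K_{m,n}$, giving $mn$ edges. For $k < m < 2k$, I would take $K_{k,n}$ on $k$ chosen vertices of the $m$-side (isolating the remaining $m-k$), giving $kn$ edges. For $m \ge 2k$, I would take the disjoint union of $K_{k,m-k}$ (with $k$ vertices of the $n$-side and $m-k$ of the $m$-side) and $K_{k,n-k}$ (with $k$ vertices of the $m$-side and $n-k$ of the $n$-side); each component has exactly $k$ vertices on one side, so contains no $P_{2k+2}$, and the edge count is $k(m-k)+k(n-k)=k(m+n-2k)$.

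For the upper bound, the first step is the structural lemma: \emph{every connected bipartite graph $H$ with $\delta(H)\ge k+1$ contains $P_{2k+2}$}. Let $P=v_1v_2\cdots v_t$ be a longest path in $H$; by maximality, $N(v_1)\subseteq V(P)$. Since $H$ is bipartite, the neighbors of $v_1$ on $P$ all lie at positions of one fixed parity, so $k+1 \le \deg(v_1) \le \lfloor t/2\rfloor$, forcing $t\ge 2k+2$. Consequently, any $P_{2k+2}$-free subgraph $G\subseteq K_{m,n}$ has, in each nontrivial component, a vertex of degree at most $k$ in $G$.

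With this in hand, the main argument is induction on $m+n$ (with $m\le n$). The base case $m\le k$ is the trivial bound $mn$. For $m > k$, delete a vertex $v$ of degree $\le k$ guaranteed by the lemma; the resulting graph lies in $K_{m-1,n}$ or $K_{m,n-1}$, and the inductive hypothesis gives $e(G)\le f(\text{smaller case})+k$. In the regime $m\ge 2k$ this closes directly: $f(m-1,n)+k=(m-1+n-2k)k+k=(m+n-2k)k$. The main obstacle is the flat region $k<m\le 2k$, where $f(m,n)=kn$ is independent of $m$ and the naive recursion $f(m-1,n)+k=kn+k$ overshoots. To handle this I would refine the approach in one of two ways: either show that any extremal $G$ in this range must contain an isolated vertex on the $m$-side (so deletion contributes $0$ rather than $k$, allowing the induction to go through), or argue component-by-component by proving that a connected bipartite graph with parts of sizes $a\le b$ and no $P_{2k+2}$ satisfies $e\le k\cdot b$ whenever $a>k$ (with the trivial $e\le ab$ when $a\le k$), and then summing over components while carefully tracking how the $n$-side vertices are distributed. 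The delicate step is the second option: ensuring the per-component bounds sum correctly without double-counting across the two sides, which is the analogue here of the Erdős–Gallai summation argument for non-bipartite paths.
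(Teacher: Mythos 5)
The paper does not prove this statement; it is imported from Gy\'arf\'as, Rousseau and Schelp, so there is no in-paper argument to compare against and the only question is whether your proposal is complete. Your lower-bound constructions are correct in all three ranges, and your structural lemma is fine: the endpoint of a longest path has all neighbours on the path, they occupy positions of a single parity, so minimum degree $k+1$ forces at least $2k+2$ vertices on the path. But the upper bound, which is the substance of the theorem, is not established. As you yourself note, the naive induction (delete a vertex of degree at most $k$) overshoots in the flat range; moreover your claim that the case $m\ge 2k$ ``closes directly'' silently uses $f(m-1,n)=(m-1+n-2k)k$, which is false at $m=2k$ (there $f(m-1,n)=kn$ and the recursion gives $kn+k>nk$), so the problematic region is all of $k<m\le 2k$, and you leave it open.

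Neither of your proposed repairs is carried out, and each hides the real difficulty. The first -- that an extremal graph in the flat range has an isolated vertex on the $m$-side -- is an unproven structural assertion about extremal configurations; it is plausible, but establishing it requires an argument of roughly the same depth as the theorem itself. The second reduces to proving the connected version of the statement (a connected $P_{2k+2}$-free bipartite graph with parts $a\le b$ and $a>k$ has at most $kb$ edges), which is precisely the core lemma of the original paper and is not supplied, and then to a summation over components that cannot be done naively: components may have their larger side inside the $m$-class (in the extremal example for $m\ge 2k$ one component is $K_{k,m-k}$), so summing $kb_i$ with ``$b_i$ on the $n$-side'' would yield $e\le kn$, which contradicts your own lower bound $(m+n-2k)k\ge kn$ when $m\ge 2k$. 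The careful side-by-side accounting you defer is exactly where the theorem lives, so as written the proposal has a genuine gap rather than a complete proof.
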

We can now prove Corollary \ref{cor:r(Kn,n,Cp,3)} assuming Theorem \ref{thm:r(Kn,n,Cp,3)} holds.
\begin{proof}[Proof of Corollary \ref{cor:r(Kn,n,Cp,3)} (Assuming Theorem \ref{thm:r(Kn,n,Cp,3)})]
For the lower bound, note that no $(C_k,3)$-colouring of $K_{n,n}$ can contain a monochromatic copy of $P_k$, since any such path in $K_{n,n}$ can be completed to a cycle of length $k$ with at most $2$ colours.
Thus by Theorem \ref{thm:GyarfasBipartiteExtremal}, we have
\[
    r(K_{n,n},C_k,3) \ge \frac{|E(K_{n,n})|}{\mr{ex}(n,n;P_k)} = \frac{n^2}{(k-2)n\pm O(1)} = \frac{n}{k-2}+o(n),
\]
proving the lower bound.

We now turn our attention to the the upper bound. Let $f:\bb{R}\to \bb{R}$ be defined by $f(x):= x^2-x+2$ and let $g:\bb{R}\to \bb{R}$ be defined by $g(x) := f(x)-k$. Note that $g$ has a positive root at $x_0 :=\frac{1+\sqrt{4k-7}}{2}$, so $0 = f(x_0)-k$. So as $f$ is increasing on $[1,\infty)$, we have $k = f(x_0) \ge f(\lfloor x_0 \rfloor)$.

Thus, by Theorem \ref{thm:r(Kn,n,Cp,3)},
\[
r(K_{n,n},C_k,3) \le r(K_{n,n}C_{[f(\lfloor x_0 \rfloor),k]},3) \le \frac{2n}{\lfloor x_0\rfloor^2-1}+o(n),
\]
proving the upper bound.
\end{proof}

The proof of Theorem \ref{thm:r(Kn,n,Cp,3)} follows very closely to the proof of Theorem~\ref{thm:r(Kn,Cp,3)}. That is, we apply Theorem~\ref{thm:GlockConflictFree} to prove a technical lemma (see Lemma~\ref{lem:bipartitecolouring} below) that gives a very structured $(C_{[k,\ell]},3)$-colouring of a large subgraph $G \subseteq K_{n,n}$. We are then able to show that adding a suitable random colouring of $K_{n,n} - G$ will not create a cycle with length in $[k,\ell]$ that has fewer than 3 colours.

The following is the technical lemma we will use to prove Theorem \ref{thm:r(Kn,n,Cp,3)}.

\begin{lemma} \label{lem:bipartitecolouring}
For each $k \ge 2$ and even $\ell \ge k^2-k+2$, there exists $\delta>0$ depending only on $k$ and $\ell$ such that for all sufficiently large $n$, there exists an edge-colouring of a spanning subgraph $G$ of $B(X,Y) \cong K_{n,n}$ with at most $\frac{2n}{k^2-1}$ colours and the following properties:
\begin{enumerate}[label=(\Roman*)]
    \item Every colour class consists of vertex-disjoint copies of $K_{\binom{k}{2},\binom{k+1}{2}}$.
    \item Given any two colours $i,j$, the $k^2$-uniform hypergraph with vertex set $V(G)$ where each edge is formed by the vertex set of a copy of $K_{\binom{k}{2},\binom{k+1}{2}}$ in colour $i$ or $j$ has girth at least $\ell+1$.
    \item The graph $L:= B(X,Y)-E(G)$ has maximum degree at most $n^{1-\delta}$.
    \item For each $2 \le m \le \ell/2$ and $(x_1,y_m) \in X\times Y$, the number of cycles $x_1\bc y_1\bc x_2\bc y_2\bc \ldots\bc x_m\bc y_m\bc x_1$ such that $x_iy_{i+1} \in E(L)$ for all $i\in [m-1]$ and there are disjoint monochromatic copies $F_1,\ldots,F_m$ of $K_{\binom{k}{2},\binom{k+1}{2}}$, all of the same colour, such that $x_iy_i \in E(F_i)$ for all $i\in [m]$, is at most $n^{(m-1)(1-\delta)}$.
\end{enumerate}
\end{lemma}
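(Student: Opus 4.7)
The plan is to mirror the conflict-free hypergraph matching approach of Lemma~\ref{lem:Cpcolouring}, with bipartite modifications throughout. I set $C := [\frac{2n}{k^2-1}]$ and construct a hypergraph $\cH$ on vertex set $E(B(X,Y)) \cup ((X \cup Y) \times C)$ whose hyperedges encode monochromatic copies of $K_{\binom{k}{2},\binom{k+1}{2}}$: for every pair $(A,B)$ with $A \subseteq X$, $B \subseteq Y$ and $\{|A|, |B|\} = \{\binom{k}{2}, \binom{k+1}{2}\}$, and every $i \in C$, include $(A \times B) \cup ((A \cup B) \times \{i\})$. Any matching $\ca{M}$ in $\cH$ is then a colouring of a subgraph of $K_{n,n}$ whose colour classes are vertex-disjoint unions of copies of $K_{\binom{k}{2},\binom{k+1}{2}}$, so (I) is immediate. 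The colour count $\frac{2n}{k^2-1}$ is chosen precisely so that the two possible orientations of the copy balance out and every vertex of $\cH$ has approximately the same degree; an analogue of Claim~\ref{clm:Cp_d-regular} (splitting each count by orientation and using $|A|+|B|=k^2$, $|A|\cdot|B| = \frac{k^2(k^2-1)}{4}$) then yields $(1 - d^{-\ep}) d \le \delta(\cH) \le \Delta(\cH) \le d$ and $\Delta_2(\cH) \le d^{1-\ep}$ for $d = \Theta(n^{k^2-2})$.

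For the conflict system I take $V(\cC) = E(\cH)$ and include in $E(\cC)$ every alternating $2$-coloured loose cycle of length $3 \le t \le \ell$ in the $k^2$-uniform hypergraph of monochromatic copies; this is the direct bipartite analogue of the non-bipartite $\cC$. Because $K_{\binom{k}{2},\binom{k+1}{2}}$ is not a clique, two different-coloured copies can share two or more vertices on a common bipartition side without sharing an edge, a situation that in the non-bipartite case was ruled out automatically by the matching condition on $\cH$; to recover the linearity implicit in the hypergraph-cycle interpretation of (II), I would augment $\cC$ with carefully chosen size-$3$ conflicts that forbid such an overlap from persisting in the matching (for example, for each pair $(F_1, F_2)$ of different-coloured copies sharing at least two vertices on a common side, forbid the configuration $\{F_1, F_2, F_3\}$ where $F_3$ is a ``witness'' copy chosen from an $O(1)$-sized family so that the boundedness constraints of Definition~\ref{def:(d,l,ep)-bounded} continue to hold). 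Standard extension counts mirroring Claim~\ref{clm:Cp_bounded_conflict_system} then verify that $\cC$ is $(d, O(1), \ep)$-bounded. For test functions I keep a $1$-uniform $w_v$ (counting hyperedges incident to $v$) for (III), and for each $(x,y) \in X \times Y$ and each $2 \le m \le \ell/2$ define indicators $w_{x,y,m}$ (of an $m$-matching of pairwise-disjoint same-colour copies reaching from $x$ to $y$) and $w'_{x,y,m}$ (of the same plus an $(m+1)$st copy of a different colour witnessing the extension forbidden by (IV)), each trackable via the bipartite translations of Claims~\ref{clm:Cp_wv_trackable}--\ref{clm:Cp_wuvm'_trackable}. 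Applying Theorem~\ref{thm:GlockConflictFree} then produces a $\cC$-free matching $\ca{M}$ whose $w_v$-concentration gives (III), whose $\cC$-freeness gives (II), and whose ratio $w'_{x,y,m}(\ca{M})/w_{x,y,m}(\ca{M})$ converts, via the combinatorial factor counting orderings and vertex choices inside each $K_{\binom{k}{2},\binom{k+1}{2}}$, into (IV).

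The main obstacle is precisely the loss of automatic linearity: in the non-bipartite proof, the clique structure meant two vertex-overlapping copies necessarily shared an edge, already ruled out by the matching; in the bipartite setting this safety net is gone, and the linearity-enforcing conflicts must be designed carefully so as not to inflate the codegrees $\Delta_j(\cC^{(i)})$ beyond the $d^{i-j-\ep}$ threshold. The hypothesis $\ell \ge k^2-k+2$ is exactly what makes the resulting (II) usable downstream in Theorem~\ref{thm:r(Kn,n,Cp,3)}: because $k^2-k = 2\binom{k}{2}$ is the length of the longest cycle inside a single $K_{\binom{k}{2},\binom{k+1}{2}}$, cycles in the range $[k^2-k+2, \ell]$ cannot be monochromatic, and combined with linearity they cannot live inside two different-coloured copies either, so only the $t \ge 3$ alternating configurations---exactly those killed by the girth condition---remain. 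Everything else is routine but bookkeeping-heavy: both orientations of the copy are tracked in parallel through every degree, codegree, conflict, and test-function estimate, which is what makes the bipartite computation noticeably more tedious and justifies its relegation to the appendix.
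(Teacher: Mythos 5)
Your overall strategy matches the paper's: encode coloured copies of $K_{\binom{k}{2},\binom{k+1}{2}}$ as hyperedges, forbid short alternating two-coloured cycles of copies via the conflict system, and use $1$-uniform degree functions plus path/extension indicator functions (split by orientation) to extract (III) and (IV). The genuine problem is exactly at the point you flag as the main obstacle, and your proposed repair does not work. Adding size-$3$ conflicts $\{F_1,F_2,F_3\}$, where $F_3$ ranges over an $O(1)$-sized family of ``witness'' copies, does not forbid the bad configuration: a matching may contain $F_1$ and $F_2$ (two differently-coloured copies sharing two vertices of the same side) while containing no witness $F_3$ at all, and then no conflict is violated. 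Since (C1) requires $|S|\ge 3$, you cannot place the pair $\{F_1,F_2\}$ itself in $\cC$, so overlap of two copies in two or more same-side vertices simply cannot be excluded by any conflict system of this type. The consequence is not the literal girth statement (II) (with the paper's exact one-vertex-intersection notion of hypergraph cycle, a two-coloured cycle of copies from the matching is already an alternating conflict), but the property the lemma is actually used for: when a two-coloured graph cycle is decomposed into monochromatic paths $P_1,\dots,P_t$ lying in copies $F_1,\dots,F_t$, one needs $V(F_i)\cap V(F_{i+1})$ to be exactly the single connector vertex in order for $V(F_1),\dots,V(F_t)$ to be a hypergraph cycle and contradict (II); with your cross-pairs-only $\cH$ this can fail, and your augmented conflicts do not restore it.

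The paper's fix is structural rather than conflict-based: the vertex set of $\cH$ is $(X\cup Y)^{(2)}\cup((X\cup Y)\times C)$, i.e.\ it includes the same-side pairs (non-edges of $K_{n,n}$), and each hyperedge is $A^{(2)}\cup(A\times\{i\})$ with \emph{all} $\binom{k^2}{2}$ pairs inside the copy's vertex set $A$. Then any two hyperedges of the matching automatically share at most one vertex of $K_{n,n}$, which is precisely the linearity you need, with no extra conflicts; the colour budget $\frac{2n}{k^2-1}$ still makes every pair-vertex (cross or same-side) have degree $(1+o(1))d$, because the two orientations of the copy balance (this is the computation in Claim~\ref{clm:Kn,n_d-regular}, which handles $x_1x_2\in X^{(2)}$ as well as cross pairs). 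If you rebuild your proof on that hypergraph, the rest of your outline (regularity, $\Delta_2$, boundedness of the alternating-cycle conflicts, trackability of $w_v$, $w_{x,y,a,b,m}$, $w'_{x,y,a,b,c,m}$, and the comparison of the two indicator counts to get (IV)) goes through as you describe. One small slip: with $\Theta(n)$ colours the common degree is $d=\Theta(n^{k^2-1})$, not $\Theta(n^{k^2-2})$.
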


Before discussing the proof of Lemma~\ref{lem:bipartitecolouring} (in Subsection~\ref{subsec:lem2} below), we first apply it to prove Theorem~\ref{thm:r(Kn,n,Cp,3)}. 
As the proof is incredibly similar to the proof of Theorem~\ref{thm:r(Kn,Cp,3)} above, we omit the proofs of the claims and include them for completeness in Appendix~\ref{app}.

\begin{proof}[Proof of Theorem \ref{thm:r(Kn,n,Cp,3)}]
Apply Lemma \ref{lem:bipartitecolouring} to obtain a coloured graph $G$ and a number $\delta$ as stated in Lemma \ref{lem:bipartitecolouring}.
\begin{restatable}{claim}{clmcolour}\label{clm:(Cp,3)-coloured_bipartite}
    $G$ is $(C_{[k^2-k+2,\ell]},3)$-coloured.
\end{restatable}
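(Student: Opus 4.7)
The plan is to mirror the proof of Claim~\ref{clm:(Cp,3)-coloured} almost verbatim, with the monochromatic copies of $K_{k-1}$ from (I) of Lemma~\ref{lem:Cpcolouring} replaced by the monochromatic copies of $K_{\binom{k}{2},\binom{k+1}{2}}$ from (I) of Lemma~\ref{lem:bipartitecolouring}. I would assume for contradiction that some $2$-coloured cycle $C=v_1 v_2\cdots v_h v_1$ with $k^2-k+2\le h\le \ell$ is contained in $G$. Partition $E(C)$ into maximal monochromatic paths $P_1,\ldots,P_t$ in cyclic order, letting $u_i$ be the unique vertex in $V(P_i)\cap V(P_{i+1})$ (indices modulo $t$). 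Maximality forces consecutive paths to receive different colours, and since only two colours appear and the sequence closes up, $t$ is even.

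The key numerical input is that the longest path in $K_{\binom{k}{2},\binom{k+1}{2}}$ has exactly $2\binom{k}{2}=k^2-k$ edges: since $\binom{k+1}{2}-\binom{k}{2}=k\ge 1$, any alternating path uses at most $\binom{k}{2}$ vertices on the smaller side and hence at most $\binom{k}{2}+1$ vertices on the larger side, for a total of $k^2-k+1$ vertices. Consequently each $P_i$ has length at most $k^2-k$, and the hypothesis $h\ge k^2-k+2>k^2-k$ forces $t\ge 2$. By property (I), each $P_i$ is contained in a unique monochromatic copy $F_i$ of $K_{\binom{k}{2},\binom{k+1}{2}}$, and any two same-colour copies are vertex-disjoint, so in particular all the $F_i$ are pairwise distinct.

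To finish, I would invoke property (II) of Lemma~\ref{lem:bipartitecolouring}. The girth-$\ge \ell+1$ condition on the $k^2$-uniform hypergraph $\cH$ whose edges are the vertex sets of the monochromatic copies in the two colours of $C$ forces $|V(F_i)\cap V(F_j)|\le 1$ for all $i\ne j$, since any double intersection could be spliced with parts of the tentative sequence below to produce a hypergraph cycle of length at most $\ell$, contradicting the girth bound. Combined with $u_i\in V(F_i)\cap V(F_{i+1})$, this yields $V(F_i)\cap V(F_{i+1})=\{u_i\}$, and the $u_i$ are distinct since they are distinct vertices of $C$. Hence $V(F_1),V(F_2),\ldots,V(F_t)$ is a bona fide cycle of length $t\le h\le \ell$ in $\cH$, contradicting (II). The case $t=2$ is absorbed into the same argument: it would force $\{u_1,u_2\}\subseteq V(F_1)\cap V(F_2)$, which the linearity step already rules out, so in fact $t\ge 3$ automatically.

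The main obstacle is justifying the single-vertex intersection $V(F_i)\cap V(F_{i+1})=\{u_i\}$ demanded by the hypergraph cycle definition. This does not follow from (I) alone, which only controls same-colour intersections, and must instead be extracted from the girth condition in (II). Apart from this subtlety, the argument is formally identical to the proof of the non-bipartite Claim~\ref{clm:(Cp,3)-coloured}, and the remainder of Theorem~\ref{thm:r(Kn,n,Cp,3)} proceeds by the same random extension scheme used in Section~\ref{sec:r(Kn,Cp,3)}.
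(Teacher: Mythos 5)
Your overall strategy is exactly the paper's: split the $2$-coloured cycle into maximal monochromatic paths, pass to the copies $F_1,\ldots,F_t$ containing them, and contradict the girth condition (II); your extra observation that a path in $K_{\binom{k}{2},\binom{k+1}{2}}$ has at most $2\binom{k}{2}=k^2-k$ edges, so $h\ge k^2-k+2$ forces $t\ge 2$, is correct. The gap is in the step you yourself single out as the main obstacle: the bound $|V(F_i)\cap V(F_j)|\le 1$ cannot be extracted from property (II). With the paper's definition of a hypergraph cycle, consecutive edges must intersect in \emph{exactly} the designated vertex and the designated vertices must be distinct; consequently two hyperedges sharing two vertices form no cycle at all (a $2$-cycle would force $e_1\cap e_2$ to equal two different singletons), and your proposed ``splicing'' of a double intersection into a longer cycle is circular, because producing any cycle of length at most $\ell$ through the other $F_s$ already requires their consecutive intersections to be singletons --- precisely what is in question. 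So (II) gives no control over two differently coloured copies meeting in two vertices lying on the \emph{same} side of the bipartition; only the opposite-side case is excluded by the elementary argument (the two complete bipartite copies would then share an edge, which cannot receive two colours), and that is the analogue of the edge-disjointness argument that suffices in the $K_{k-1}$ case. The degenerate case $t=2$, which you fold into this same ``linearity step,'' is therefore also not handled.

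What is actually needed is that any two copies used in the colouring meet in at most one vertex. This is true, but it is a feature of the construction in the proof of Lemma~\ref{lem:bipartitecolouring} rather than a consequence of (I)--(IV) as stated: the copies correspond to hyperedges of a matching in $\cH$, and since $V(\cH)$ contains \emph{every} pair of vertices of $X\cup Y$ (same-side pairs included), two hyperedges of a matching cannot contain a common pair. To complete your argument you should either invoke this structural fact explicitly (or record it as a strengthened form of (I)/(II) in the lemma), rather than appeal to the girth condition. Two smaller points: your inference ``same-colour copies are vertex-disjoint, so the $F_i$ are pairwise distinct'' is backwards --- disjointness of \emph{distinct} copies does not by itself rule out $F_i=F_j$ for two same-coloured, non-consecutive paths --- and distinctness is needed for the final sequence to be a legitimate cycle in $\cH$; the paper glosses over both of these points as well, asserting $V(F_i)\cap V(F_{i+1})=\{u_i\}$ with the same brief justification as in the non-bipartite case.
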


Let $0 < \alpha< \min\{\frac{\delta}{4},\frac12\}$. We will colour edges in $L$ randomly such that there is a non-zero probability that the $(C_{[k^2-k+2,\ell]},3)$-colouring is preserved.
Let $P$ be a set of $r:=n^{1-\alpha}$ new colours that do not colour edges in $G$.
Colour each edge of $L$ independently with a colour from $P$ with uniform probability $1/r$.

For any pair $e,f$ of adjacent edges in $L$ and $i \in P$, let $A_{e,f,i}$ be the event that both $e$ and $f$ receive colour $i$.
Then $\bP[A_{e,f,i}] = r^{-2} = n^{-2(1-\alpha)}$.
For $\binom{k}{2} + 1 =\frac{k^2-k+2}{2}\le m\le \frac{\ell}{2}$ and a $2m$-cycle $D$ in $L$, define $B_D$ to be the event that $D$ is properly $2$-coloured.

Then once we have fixed the colours of two adjacent edges, there is only one choice for the colour of each remaining edge, so $\bP[B_D] \le r^{-(2m-2)} = n^{-(2m-2)(1-\alpha)}$.
For $2\le m \le \frac{\ell}{2}$ and a $2m$-cycle $D=u_1\bc v_1\bc \ldots u_m\bc v_m \bc u_1$ in $K_n$ where there are disjoint monochromatic copies of $F_1,\ldots,F_m$ of $K_{k-1}$ in $G$, all of the same colour, such that $u_iv_i \in E(F_i)$ for all $i \in [m]$ and $v_iu_{i+1} \in E(L)$ (taken mod $m$) for all $i \in [m]$, and for a colour $j \in P$, let $C_{D,j}$ be the event that every $v_iu_{i+1}$ (mod $m$) receives the same colour.
Then $\bP[C_{D,j}] = r^{-m}= n^{-m(1-\alpha)}$.
Let $\ca{A}$ be the set of all defined $A_{e,f,i}$, and for all $m$, let $\ca{B}_m$, and $\ca{C}_m$ be the sets of all defined $B_D$ and $C_{D,j}$, respectively, where $D$ has length $2m$.
Let 
\[\cE:=\ca{A}\cup \bigcup_{m=(k^2-k+2)/2}^{\ell/2}\ca{B}_m \cup \bigcup_{m=2}^{\ell/2}\ca{C}_m.\]

\begin{restatable}{claim}{clmlll} \label{clm:NoEventAsymLovasz_bipartite}
There exists a colouring of $L$ such that none of the events in $\cE$ occurs.
\end{restatable}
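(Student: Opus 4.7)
The plan is to mirror the proof of Claim~\ref{clm:NoEventAsymLovasz} essentially verbatim, taking advantage of the fact that parts (III) and (IV) of Lemma~\ref{lem:bipartitecolouring} give exactly the same bounds ($\Delta(L) \le n^{1-\delta}$ and the cycle-count bound $n^{(m-1)(1-\delta)}$) as the corresponding parts of Lemma~\ref{lem:Cpcolouring}. Since these are the only structural inputs used to bound the neighbourhoods in the dependency graph, the whole LLL calculation carries over with only the range of $m$ adjusted (the lower endpoint for $\mathcal{B}_m$ becomes $(k^2-k+2)/2$ instead of $\lceil k/2 \rceil$, but this affects neither the probability bounds nor the inequalities on exponents).

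First, I would declare two events in $\cE$ to be \emph{edge-disjoint} if their associated edge sets in $L$ are disjoint, and for a family $\mathcal{X} \subseteq \cE$ write $d_{\mathcal{X}}(E)$ for the number of members of $\mathcal{X}$ that share at least one edge with $E$. By the Asymmetric Lov\'asz Local Lemma (Lemma~\ref{lem:asymlovaszlocal}), it suffices to produce reals $x, y_m \ (\tfrac{k^2-k+2}{2}\le m\le \tfrac{\ell}{2}), z_m \ (2\le m \le \tfrac{\ell}{2}) \in (0,1)$ satisfying, for every $E \in \cE$ associated with $t$ edges of $L$,
\[
\bP(A) \le x f(A),\quad \bP(B_m) \le y_m f(B_m), \quad \bP(C_m) \le z_m f(C_m),
\]
where $f(E) = (1-x)^{d_{\mathcal{A}}(E)} \prod_{m} (1-y_m)^{d_{\mathcal{B}_m}(E)} \prod_m (1-z_m)^{d_{\mathcal{C}_m}(E)}$.

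Next I would estimate the three neighbourhood counts exactly as in the proof of Claim~\ref{clm:NoEventAsymLovasz}. Each edge of $L$ is incident with at most $2\Delta(L) \le 2n^{1-\delta}$ other edges of $L$ and there are $r = n^{1-\alpha}$ colours in $P$, giving $d_{\mathcal{A}}(E) \le 2t n^{2-\delta-\alpha}$. For $\mathcal{B}_m$, each edge of $L$ lies in at most $\Delta(L)^{2m-2} \le n^{(2m-2)(1-\delta)}$ cycles of length $2m$ in $L$, so $d_{\mathcal{B}_m}(E) \le t n^{(2m-2)(1-\delta)}$. For $\mathcal{C}_m$, Lemma~\ref{lem:bipartitecolouring}(IV) and the factor of $r$ colours give $d_{\mathcal{C}_m}(E) \le t n^{1-\alpha} n^{(m-1)(1-\delta)}$. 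Setting
\[
x = n^{-2+3\alpha}, \qquad y_m = n^{-(2m-2)+(2m-1)\alpha}, \qquad z_m = n^{-m+(m+1)\alpha},
\]
and using $(1 - a^{-1})^a \ge 1/4$ for $a \ge 2$, each factor of $f(E)$ is at least $(1/4)^{c\, n^b}$ for some constants $c>0$ and (crucially) $b<0$, since $\alpha < \delta/4$ forces $2\alpha < \delta$, $(2m-1)\alpha < (2m-2)\delta$ for $m \ge 2$, and $(m+2)\alpha < (m-1)\delta$ for $m \ge 2$. Hence $f(E) \to 1$ as $n \to \infty$, and the three inequalities $xf(A) > n^{-2(1-\alpha)} = \bP(A)$, $y_m f(B_m) > n^{-(2m-2)(1-\alpha)} = \bP(B_m)$, and $z_m f(C_m) > n^{-m(1-\alpha)} = \bP(C_m)$ all hold for sufficiently large $n$.

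The only real bookkeeping concern is to check the exponent inequalities $(2m-1)\alpha < (2m-2)\delta$ and $(m+2)\alpha < (m-1)\delta$ across the new ranges $\tfrac{k^2-k+2}{2} \le m \le \tfrac{\ell}{2}$ and $2 \le m \le \tfrac{\ell}{2}$ respectively; both are monotone in $m$ and reduce to $\alpha < \delta/4$ at $m = 2$, which was assumed from the outset. Thus no step in the original calculation is broken, the Local Lemma applies, and a colouring of $L$ avoiding every event in $\cE$ exists, completing the proof.
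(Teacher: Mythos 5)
Your proposal is correct and follows essentially the same route as the paper's own proof: the paper's Appendix argument for Claim~\ref{clm:NoEventAsymLovasz_bipartite} is a verbatim transcription of the proof of Claim~\ref{clm:NoEventAsymLovasz} with the same choices of $x$, $y_m$, $z_m$, the same neighbourhood bounds via Lemma~\ref{lem:bipartitecolouring}(III)--(IV), and only the range of $m$ adjusted, exactly as you describe. Your exponent check (everything reduces to $\alpha<\delta/4$, with the worst case at $m=2$) matches the paper's verification, so nothing is missing.
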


We claim that a colouring of $L$ from Claim \ref{clm:NoEventAsymLovasz_bipartite}, combined with the colouring of $G$, is a $(C_{[k^2-k+2,\ell]},3)$-colouring of $K_{n,n}$.
Let $k^2-k+2 \le h \le \ell$, and let $C$ be an $h$-cycle in $K_{n,n}$.
We will show that $C$ receives at least $3$ colours.

If $E(C) \subseteq E(G)$, then we are done by Claim \ref{clm:(Cp,3)-coloured_bipartite}.
If $E(C) \subseteq E(L)$, then $C$ receives at least $3$ colours because events in $\ca{A}$ and $\bigcup_{m=(k^2-k+2)/2}^{\ell/2}\ca{B}_m$ do not occur ($C$ is properly coloured but not properly $2$-coloured).
If $E(C)$ contains two adjacent edges in $L$ and an edge in $G$, then since events in $\ca{A}$ do not occur, $C$ receives at least $2$ colours in $P$ and at least $1$ colour not in $P$, so it receives at least $3$ colours.

We have reduced to the case that $E(C)\cap E(L)$ is a matching and $C$ receives at most $2$ colours, say $c_1 \notin P$ and $c_2 \in P$, so it suffices to prove that this is impossible.
Suppose for contradiction that it is possible.
Let $m:=|E(C)\cap E(L)|$.
In this case, $C$ can be decomposed into paths and single edges $P_1\bc e_1\bc P_2\bc e_2\bc \ldots \bc P_m\bc e_m$, where $e_i$ is incident with an endvertex $v_i$ of $P_i$ and and an endvertex $u_{i+1}$ of $P_{i+1}$ for all $i$ (taken mod $m$), each $P_i$ is a monochromatic path in colour $c_1$, and each $e_i$ receives the colour $c_2$. So $u_1\bc v_1\bc u_2\bc v_2 \bc \ldots \bc u_m\bc v_m\bc u_1$ is a cycle with alternating colours $c_1,c_2$.
Let $D=x_1\bc y_1\bc x_2\bc y_2 \bc \ldots\bc x_{m'}y_{m'}\bc x_1$ be a shortest cycle with alternating colours $c_1,c_2$ in $K_{n,n}$, where $x_1y_1$ receives colour $c_1$.

For each $i\in [m']$, let $F_i$ be the monochromatic copy of $K_{\binom{k}{2},\binom{k+1}{2}}$ that contains the edge $x_iy_i$.
We claim that the $F_i$ are all distinct (and thus disjoint by Lemma \ref{lem:bipartitecolouring}(I)).
Suppose not, and let $1\le i< j\le m'$ such that $F_i = F_j$.
Then since $y_i,x_j \in V(F_i)$, $y_ix_j$ receives colour $c_1$, so $y_i\bc x_{i+1}\bc y_{i+1}\bc \ldots\bc x_{j-1}\bc y_{j-1} \bc x_j \bc y_i$ is a cycle with alternating colours $c_1,c_2$ in $K_{n,n}$ of length less than $2m'$, contradicting minimality of $m'$.

This confirms that the event $C_{D,c_2}$ occurs.
But this contradicts Claim \ref{clm:NoEventAsymLovasz_bipartite}.
Therefore, $K_{n,n}$ is $(C_{[k^2-k+2,\ell]},3)$-coloured with $\frac{2n}{k^2-1}+n^{1-\alpha} = \frac{2n}{k^2-1}+o(n)$ colours, so $r(K_{n,n},C_{[k^2-k+2,\ell]},3) \le \frac{2n}{k^2-1}+o(n)$, as desired.
\end{proof}

\subsection{Proof of Lemma~\ref{lem:bipartitecolouring}}\label{subsec:lem2}

Here we give the proof of Lemma~\ref{lem:bipartitecolouring}. It follows analogously to the proof of Lemma~\ref{lem:Cpcolouring}. As the proofs of the claims are essentially some technical counting that follows very similarly to arguments presented in the proof of Lemma~\ref{lem:Cpcolouring} above, we omit these here. For completeness, they can be found in Appendix~\ref{app}.

\begin{proof}[Proof of Lemma~\ref{lem:bipartitecolouring}]
    Let $\ep = \ep(k) \in (0,\frac{1}{k^2-1})$.
We will choose $n$ to be sufficiently large and $\delta$ to be sufficiently small compared to $\ep$.

We define the $k(k+1)$-uniform hypergraph $\cH$ as follows.
Let $X$ and $Y$ be two disjoint vertex sets of size $n$.
Let $C:= \left[\frac{2n}{k^2-1}\right]$; we will refer to $C$ as the set of colours.
Let 
\[V(\cH):= (X\cup Y)^{(2)}\cup ((X\cup Y)\times C), \]
let 
\[\ca{T}:= \left\{A \in (X\cup Y)^{(k^2)}: |A\cap X| \in \left\{\binom{k}{2},\binom{k+1}{2}\right\}\right\}, \]
and let
\[E(\cH):= \left\{A^{(2)}\cup(A\times\{i\}):A \in \ca{T},\, i \in C\right\}. \]

There is a natural bijection between $E(\cH)$ and $\ca{T}\times C$, so denote an edge $A^{(2)}\cup (A\times\{i\}) \in E(\cH)$ simply by $(A,i)$.

For each matching $\ca{M}$ of $\cH$, let $G=G(\ca{M})$ be the edge-coloured graph with $V(G)=X \cup Y$, $E(G) = \{e\in E(B(X,Y)):e\in A^{(2)} \text{ for some }(A,i) \in \ca{M}\}$, and edge-colouring $\phi$ with $e\mapsto i$ for all $(A,i) \in \ca{M}$ and $e \in A^{(2)}$.
Note that by the definition of $\cH$, any $G(\ca{M})$ satisfies condition (I).

Let 
\[d:=\frac{k^2n^{k^2-1}}{\binom{k}{2}!\binom{k+1}{2}!}.\]

\begin{restatable}{claim}{clmdreg} \label{clm:Kn,n_d-regular}
    $(1-d^{-\ep})d\le \delta(\cH)\le\Delta(H)\le d$.
\end{restatable}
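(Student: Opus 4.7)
The plan is to compute the degree in $\cH$ of each type of vertex in $V(\cH)$ separately, and show that in each case the degree equals $d$ up to a multiplicative factor of $1 \pm O(n^{-1})$. Since the choice $\ep < 1/(k^2-1)$ ensures $d^{-\ep} > n^{-1}$ for large $n$, this precision suffices to establish both $\Delta(\cH) \le d$ and $\delta(\cH) \ge (1 - d^{-\ep})d$. There are three types of vertex in $V(\cH)$: (a) crossing pairs $\{u, v\}$ with $u \in X$ and $v \in Y$; (b) same-side pairs $\{u, v\}$ with $u, v \in X$ (and symmetrically for $Y$); and (c) vertex--colour pairs $(v, i) \in (X \cup Y) \times C$.

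For each type, the count proceeds by first choosing whether $|A \cap X| = \binom{k}{2}$ or $|A \cap X| = \binom{k+1}{2}$, then filling in the remaining entries of $A$ from each side of the bipartition after placing the given vertex(es), and finally multiplying by $|C|$ when no colour has already been specified. For type (a) this gives $2|C|\binom{n-1}{\binom{k}{2}-1}\binom{n-1}{\binom{k+1}{2}-1}$; for type (c) with $v \in X$ it gives $\binom{n-1}{\binom{k}{2}-1}\binom{n}{\binom{k+1}{2}} + \binom{n-1}{\binom{k+1}{2}-1}\binom{n}{\binom{k}{2}}$; and for type (b) with $u, v \in X$ it gives $|C|$ times $\binom{n-2}{\binom{k}{2}-2}\binom{n}{\binom{k+1}{2}} + \binom{n-2}{\binom{k+1}{2}-2}\binom{n}{\binom{k}{2}}$. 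Bounding each using $\binom{n-j}{a} \le n^a/a!$ and $|C| \le 2n/(k^2-1)$ and applying, respectively, the identities $4\binom{k}{2}\binom{k+1}{2} = k^2(k^2-1)$, $\binom{k}{2} + \binom{k+1}{2} = k^2$, and $\binom{k}{2}(\binom{k}{2}-1) + \binom{k+1}{2}(\binom{k+1}{2}-1) = k^2(k^2-1)/2$, each count simplifies to exactly $d = k^2 n^{k^2-1}/(\binom{k}{2}!\binom{k+1}{2}!)$. The lower bound is recovered by the same argument with the sharper estimates $\binom{n-j}{a} = (1 - O(n^{-1}))n^a/a!$ and $|C| = (1 - O(n^{-1})) \cdot 2n/(k^2-1)$.

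The main obstacle is verifying that all three vertex types really do yield the same leading-order degree: the identities for types (a) and (c) are immediate, while the identity for type (b) requires a short direct expansion using $\binom{k}{2} = k(k-1)/2$ and $\binom{k+1}{2} = k(k+1)/2$. Otherwise the argument closely parallels that of Claim \ref{clm:Cp_d-regular} from the unipartite case, with the additional bookkeeping of splitting the pair vertices into crossing and same-side subcases being the only new subtlety.
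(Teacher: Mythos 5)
Your proposal is correct and follows essentially the same route as the paper: it computes the degree of each of the three vertex types (same-side pairs, crossing pairs, and vertex--colour pairs), verifies via the identities $\binom{k}{2}+\binom{k+1}{2}=k^2$, $4\binom{k}{2}\binom{k+1}{2}=k^2(k^2-1)$, and $\binom{k}{2}\bigl(\binom{k}{2}-1\bigr)+\binom{k+1}{2}\bigl(\binom{k+1}{2}-1\bigr)=k^2(k^2-1)/2$ that each leading term equals $d=k^2n^{k^2-1}/\bigl(\binom{k}{2}!\binom{k+1}{2}!\bigr)$, and uses $\ep<1/(k^2-1)$ to absorb the $O(n^{k^2-2})$ error into $d^{1-\ep}$, exactly as in the paper's proof.
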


\begin{restatable}{claim}{clmdd} \label{clm:Kn,n_Delta2}
    $\Delta_2(\cH)\le d^{1-\ep}$.
\end{restatable}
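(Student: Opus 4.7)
The plan is to argue by a short case analysis on the types of the two vertices $u,v \in V(\cH)$, which can each be either a pair in $(X\cup Y)^{(2)}$ (an ``edge-type'' vertex of $\cH$) or an element of $(X\cup Y)\times C$ (a ``colour-type'' vertex). In each case I will bound the number of edges $(A,i)\in E(\cH)$ containing both $u$ and $v$ by $O(n^{k^2-2})$; since $d=\Theta(n^{k^2-1})$ and $\ep<\frac{1}{k^2-1}$, this will give $O(n^{k^2-2})\le d^{1-\ep}$ for all sufficiently large $n$, proving the claim.

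First I would handle $u=\{a,b\}$, $v=\{c,d\}\in (X\cup Y)^{(2)}$. Any edge $(A,i)$ containing both must satisfy $\{a,b,c,d\}\subseteq A$, so at least three vertices of $A$ are pre-specified (four if the pairs are disjoint). This leaves at most $k^2-3$ vertices of $A$ to be chosen, together with a colour from $C$, giving at most $O(n^{k^2-3})\cdot O(n)=O(n^{k^2-2})$ edges. Next I would handle the mixed case $u=\{a,b\}$, $v=(c,j)$: here we need $a,b,c\in A$ and the colour is forced to equal $j$, so the count is at most $O(n^{k^2-2})$. Finally, for $u=(a,j_1)$, $v=(b,j_2)$ with both in some common edge, the two colours must agree (else no edge contains both), and we have $\{a,b\}\subseteq A$ with the colour determined, again giving $O(n^{k^2-2})$.

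The restriction $A\in \ca{T}$ (i.e.\ $|A\cap X|\in\{\binom{k}{2},\binom{k+1}{2}\}$) is a further constraint on the choices counted above, so it can only decrease each count; hence the naive upper bounds already suffice. Substituting $d=\Theta(n^{k^2-1})$ and using $\ep<\frac{1}{k^2-1}$ gives
\[d^{1-\ep}=\Omega\!\left(n^{(k^2-1)(1-\ep)}\right)=\Omega\!\left(n^{k^2-2+(1-\ep(k^2-1))}\right),\]
so $O(n^{k^2-2})\le d^{1-\ep}$ for $n$ large, as required.

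There is no real obstacle here: the content is a direct degree count, essentially identical in spirit to the corresponding codegree bound in Claim~\ref{clm:Cp_d-regular} for the non-bipartite setting. The only thing to be mildly careful about is to do the case analysis on $u,v$ exhaustively (edge-type/edge-type, edge-type/colour-type, colour-type/colour-type) and to note that in every case at least two ``free'' vertices of $A$ are removed, either by membership constraints or by the colour being forced, leaving the $O(n^{k^2-2})$ bound uniformly across the cases.
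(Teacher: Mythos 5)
Your proof is correct and follows essentially the same route as the paper's: a case analysis on the types of the two vertices showing that in every case at most $k^2-2$ vertices and colours remain to be chosen, giving $O(n^{k^2-2})\le d^{1-\ep}$ since $\ep<\tfrac{1}{k^2-1}$.
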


We next define a conflict system $\cC$ for $\cH$.
We set $V(\cC) := E(\cH)$.
Let $E(\cC)$ be the set of edges that form a $2m$-cycle in $\cH$ in alternating colours for some $2\le m\le \ell/2$.
More precisely, given $2\le m \le \ell/2$, distinct vertices $x_1,\ldots,x_m \in X$ and $y_1,\ldots,y_m \in Y$, two distinct colours $i,j \in C$, and $A_1\bc \ldots\bc A_m\bc B_1\bc\ldots\bc B_m \in \ca{T}$ with $x_i \in A_i \cap B_i$ and $y_i \in B_i \cap A_{i+1}$ for all $i \in [m]$ (setting $A_{m+1}:= A_1$), say $S:=\{(A_1,i),(B_1,j),(A_2,i),(B_2,j),\ldots,(A_m,i),(B_m,j)\}$ is in $E(\cC)$ whenever $S$ is a matching in $\cH$.

\begin{restatable}{claim}{clmconflict} \label{clm:Kn,n_bounded_conflict_system}
    $\cC$ is a $(d,O(1),\ep)$-bounded conflict system for $\cH$.
\end{restatable}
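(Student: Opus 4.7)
The plan is to mirror the proof of Claim~\ref{clm:Cp_bounded_conflict_system}, adjusting the counting to account for the bipartite structure where each $A\in\ca{T}$ has $|A|=k^2$ and $|A\cap X|\in\{\binom{k}{2},\binom{k+1}{2}\}$. The key numerical fact we will use throughout is $d=\Theta(n^{k^2-1})$, so that when extending a matching of $\cH$, each new set $A\in\ca{T}$ chosen subject to sharing at least one vertex with a previously-chosen set contributes $O(n^{k^2-1})=O(d)$ choices, and each such set sharing two vertices with previously-chosen sets contributes $O(n^{k^2-2})=O(d\cdot n^{-1})$ choices. (The sub-case distinction for which side of $X\cup Y$ each shared vertex lies on only affects constants.)

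First, (C1) is immediate: each $S\in E(\cC)$ has $|S|=2m$ for some $2\le m\le\ell/2$, hence $4\le|S|\le\ell$. For (C2), fix an edge $(A,i)\in E(\cH)$ and count cycles in $\cC^{(2m)}$ through it. Writing such a cycle as $(A,i)\bc(B_1,j)\bc(A_2,i)\bc(B_2,j)\bc\ldots\bc(A_m,i)\bc(B_m,j)$, we pick $j\in C\setminus\{i\}$ in $O(n)$ ways, then walk around the cycle choosing $B_1,A_2,B_2,\ldots,A_m$ each with exactly one vertex fixed by intersection with the preceding set, giving $O(n^{k^2-1})$ choices each, and finally $B_m$ with two vertices fixed (by $A_m$ and $A=A_1$), giving $O(n^{k^2-2})$. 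Multiplying yields $O(n\cdot(n^{k^2-1})^{2m-2}\cdot n^{k^2-2})=O(n^{(k^2-1)(2m-1)})=O(d^{2m-1})$, which establishes~(C2).

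For (C3), fix $2\le m\le\ell/2$ and $T=\{(A_1,i),\ldots,(A_a,i),(B_1,j),\ldots,(B_b,j)\}\subseteq E(\cH)$ with $|T|=j'=a+b$ and $a\ge b$; we count extensions of $T$ to a cycle in $\cC^{(2m)}$. First, choose the cyclic positions of the elements of $T\setminus\{(A_1,i)\}$ in $O(1)$ ways; this leaves $2m-j'$ sets to fill in. Walking around the cycle, each unchosen set shares one vertex with its predecessor, and if its successor is already in $T$ it shares a second vertex, contributing a restricting factor of $n^{-1}$. Let $r$ be the total number of such restrictions. If $b\ge 1$ both colours are already determined by $T$ and $r\ge 1$; if $b=0$ we must additionally pick the second colour in $O(n)$ ways, but then $|T|\ge 2$ forces $r\ge 2$. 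In either case the extension count is $O(n^{(k^2-1)(2m-j')-1})$, and since $\ep<1/(k^2-1)$ we obtain $n^{(k^2-1)(2m-j')-1}\le d^{2m-j'-\ep}$, proving~(C3).

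The main potential obstacle is ensuring the bookkeeping of "restricted" positions is correct when $T$ contains many consecutive elements around the cycle, and when the two colours of $T$ appear in non-adjacent blocks; this is precisely where the case split $b=0$ vs.\ $b\ge 1$ arises, and the argument above follows the same framework as in Claim~\ref{clm:Cp_bounded_conflict_system}, relying only on $d=\Theta(n^{k^2-1})$ and $\ep<1/(k^2-1)$ rather than on any feature specific to the non-bipartite setting.
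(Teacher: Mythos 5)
Your proposal is correct and follows essentially the same route as the paper's proof: (C1) is immediate, (C2) is the same walk-around-the-cycle count with one set having two fixed vertices, and (C3) uses the identical ``restricted position'' bookkeeping with the case split $b=0$ (extra colour choice, but $r\ge 2$ since same-coloured edges of $T$ are pairwise non-adjacent on the alternating cycle) versus $b\ge 1$ (colours determined, $r\ge 1$). The only difference is cosmetic: you make explicit that the bipartite side constraints on each $A\in\ca{T}$ only affect constants, which the paper leaves implicit.
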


We next define a set of test functions.
For each $v \in X \cup Y$, let $S_v$ be the set of edges in $B(X,Y)$ incident to $v$.
Let $w_v:E(\cH)\to [0,\binom{k+1}{2}]$ be the $1$-uniform test function defined by $S\mapsto |S \cap S_v|$.

\begin{restatable}{claim}{clmtrack} \label{clm:Kn,n_wv_trackable}
    $w_v$ is $(d,\ep,\cC)$-trackable for all $v \in V$.
\end{restatable}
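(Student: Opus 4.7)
\textbf{Proof proposal for Claim~\ref{clm:Kn,n_wv_trackable}.}  The plan is to mirror the strategy of Claim~\ref{clm:Cp_wv_trackable}: since $w_v$ is a $1$-uniform test function, properties (W2), (W3), and (W4) should all hold vacuously, and only (W1) requires computation. For (W2), the index set $[j-1]=[0]$ is empty. For (W3), the hypothesis $w_v(\{S\in E(\cH)^{(1)}:e,f\in S\})>0$ fails trivially because no singleton contains two distinct elements. For (W4), every singleton $\{e\}\subseteq E(\cH)$ is $\cC$-free (the shortest edge of $\cC$ has size $4$), so there is no constraint to check. These three observations will take a single sentence each.

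The substance of the argument is the (W1) bound $w_v(\cH)\ge d^{1+\ep}$, which I would establish by double counting. Without loss of generality take $v\in X$. For any hyperedge $(A,i)\in E(\cH)$ containing $v$, the intersection $\{v\}^{(2)}\cup$-pairs inside $A$ with $S_v$ equals the bipartite pairs of the form $\{v,u\}\subseteq A$, i.e., $|A\cap Y|$ such pairs. Hence
\[ w_v(\cH)\;=\;\sum_{(A,i):\, v\in A}|A\cap Y|\;=\;|C|\left[\binom{k+1}{2}\binom{n-1}{\binom{k}{2}-1}\binom{n}{\binom{k+1}{2}}+\binom{k}{2}\binom{n-1}{\binom{k+1}{2}-1}\binom{n}{\binom{k}{2}}\right], \]
splitting according to whether $|A\cap X|=\binom{k}{2}$ or $\binom{k+1}{2}$. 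Using $|C|=2n/(k^2-1)$ and the identity $m/m!=1/(m-1)!$, a short simplification will give
\[ w_v(\cH)\;=\;\frac{4n^{k^2}}{(k^2-1)\,(\binom{k}{2}-1)!\,(\binom{k+1}{2}-1)!}\;\pm\;O(n^{k^2-1}). \]
Comparing with $d=k^2n^{k^2-1}/(\binom{k}{2}!\binom{k+1}{2}!)$ and using $\binom{k}{2}\binom{k+1}{2}=k^2(k^2-1)/4$, this ratio equals $n(1+o(1))$, i.e., $w_v(\cH)=(1+o(1))nd=\Theta(d^{1+\frac{1}{k^2-1}})$.

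Since $\ep\in(0,\frac{1}{k^2-1})$ and $n$ (hence $d$) is taken sufficiently large, we conclude $w_v(\cH)>d^{1+\ep}$, which is (W1). The main obstacle, such as it is, is purely bookkeeping: arranging the two binomial contributions (corresponding to the two possible bipartition types of $A\in\ca{T}$) so that the factorials cancel cleanly against $d$, and verifying that the constant $\binom{k}{2}\binom{k+1}{2}/(k^2(k^2-1)/4)=1$ gives the clean ratio $w_v(\cH)/d\sim n$. Once this is observed, the choice of $\ep<1/(k^2-1)$ made at the start of the proof of Lemma~\ref{lem:bipartitecolouring} immediately delivers (W1), completing the verification that $w_v$ is $(d,\ep,\cC)$-trackable.
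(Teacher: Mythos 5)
Your proposal is correct and follows essentially the same route as the paper: (W2)--(W4) are dismissed as vacuous for a $1$-uniform test function, and (W1) is the estimate $w_v(\cH)=(1+o(1))nd=\Theta\bigl(d^{1+\frac{1}{k^2-1}}\bigr)>d^{1+\ep}$. The only difference is bookkeeping: you recount $w_v(\cH)$ from scratch by splitting over the two bipartition types of $A\in\ca{T}$, whereas the paper writes $w_v(\cH)=\sum_{e\in S_v}d_\cH(e)=nd\pm O(n^{k^2-1})$ and reuses the degree asymptotics already established in Claim~\ref{clm:Kn,n_d-regular}.
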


For each $2\le m \le \ell/2$, $(x,y) \in X\times Y$, and $a,b \in \{0,1\}$, let
\begin{align*}
    \ca{P}_{x,y,a,b,m}:=&\bigg\{\{(A_1,i),\ldots,(A_m,i)\}\in E(\cH)^{(m)}:i \in C, A_s\cap A_t = \emptyset\ \forall\ s\neq t, \\
    &x \in A_1,\ |A_1\cap X| = \binom{k+a}{2},\ y \in A_m,\ |A_m\cap Y| =\binom{k+b}{2}\bigg\}.
\end{align*}

Then by choosing $A_1\setminus\{x\},A_m\setminus\{y\}$, then $\{A_2,\ldots,A_{m-1}\}$, then $i$, we have
\begin{align} \label{eqn:Pxyabm}
    |\ca{P}_{x,y,a,b,m}| &= \binom{n}{\binom{k+a}{2}-1}\binom{n}{\binom{k+1-a}{2}}\binom{n}{\binom{k+b}{2}-1}\binom{n}{\binom{k+1-b}{2}}\frac{1}{(m-2)!} \nonumber \\
    &\quad \cdot \left(2\binom{n}{\binom{k}{2}}\binom{n}{\binom{k+1} {2}}\right)^{m-2} \frac{2n}{k^2-1} \pm O(n^{mk^2-2}) \nonumber \\
    %&= \frac{2^{m-1}n^{mk^2-1}}{(k^2-1)\left(\binom{k+a}{2}-1\right)! (m-2)! \binom{k+1-a}{2}!\left(\binom{k+b}{2}-1\right)!\binom{k+1-b}{2}!\left(\binom{k}{2}!\binom{k+1}{2}!\right)^{m-2}} \nonumber \\
    %&\quad\pm O(n^{mk^2-2}) \nonumber \\
    %&= \frac{2^{m-1}\binom{k+a}{2}\binom{k+b}{2}n^{m(k^2-1)}n^{m-1}}{(k^2-1)(m-2)!\left(\binom{k}{2}!\binom{k+1}{2}!\right)^m}  \pm O(n^{mk^2-2}) \nonumber \\
    &= \frac{2^{m-1}\binom{k+a}{2}\binom{k+b}{2}d^m n^{m-1}}{k^{2m}(k^2-1)(m-2)!}  \pm O(n^{mk^2-2}).
    % &= \frac{2^{m-3}d^m n^{m-1}(k+a)(k+a-1)(k+b)(k+b-1)}{k^{2m}(k^2-1)}  \pm O(n^{mk^2-2}) \nonumber \\
    % &= \frac{2^{m-3}d^m n^{m-1}(k+2a-1)(k+2b-1)}{k^{2m-2}(k^2-1)}  \pm O(n^{mk^2-2}).
\end{align}

Let $w_{x,y,a,b,m}$ be the indicator function defined by 
\[w_{x,y,a,b,m}(S)=\begin{cases}1, & S \in \ca{P}_{x,y,a,b,m}, \\ 0, & S \notin \ca{P}_{x,y,a,b,m}\end{cases}\]
for all matchings $S \in E(\cH)^{(m)}$, extended to all subsets of $E(\cH)$ as described in Section \ref{sec:forbidden}.
Note that $w_{x,y,a,b,m}$ is an $m$-uniform test function.

\begin{restatable}{claim}{clmtrackb} \label{clm:Kn,n_wxyabm_trackable}
    For all $(x,y) \in X\times Y$, $2 \le m \le \ell/2$, and $a,b \in \{0,1\}$, $w_{x,y,a,b,m}$ is $(d,\ep,\cC)$-trackable.
\end{restatable}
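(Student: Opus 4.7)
The plan is to verify each of conditions (W1)--(W4) from the definition of a $(d,\ep,\cC)$-trackable test function for the $m$-uniform $w_{x,y,a,b,m}$, following the template used in Claim~\ref{clm:Cp_wuvm_trackable} but with the bipartite counts given in (\ref{eqn:Pxyabm}).

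For (W1), equation (\ref{eqn:Pxyabm}) yields $w_{x,y,a,b,m}(\cH) = |\ca{P}_{x,y,a,b,m}| = \Theta(d^m n^{m-1})$. Since $d = \Theta(n^{k^2-1})$, this is $\Theta(d^{m+(m-1)/(k^2-1)})$, which exceeds $d^{m+\ep}$ by the choice $\ep < 1/(k^2-1)$. For (W2), I fix $j' \in [m-1]$ and a matching $S' \in E(\cH)^{(j')}$, and count the extensions of $S'$ inside $\ca{P}_{x,y,a,b,m}$: every edge of $S'$ must share the common colour of the set, removing one $n$-factor, and occupies $k^2$ vertices of $X \cup Y$, so compared with (\ref{eqn:Pxyabm}) I obtain
\[w_{x,y,a,b,m}\bigl(\{S \in E(\cH)^{(m)} : S \supseteq S'\}\bigr) = O\bigl(w_{x,y,a,b,m}(\cH)\,n^{-j'k^2}\bigr) < w_{x,y,a,b,m}(\cH)/d^{j'+\ep},\]
again using $\ep < 1/(k^2-1)$.

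For (W3), the key observation is that every element of $\ca{P}_{x,y,a,b,m}$ is monochromatic, so any distinct $e,f \in E(\cH)$ with $w_{x,y,a,b,m}(\{S:e,f\in S\})>0$ must take the form $e=(A,i)$ and $f=(A',i)$ for disjoint $A,A'\in\ca{T}$. Fix $2\le m'\le \ell/2$ and set $j'=2m'-1$; each element of $\cC_e^{(j')}\cap\cC_f^{(j')}$ is then a ``path'' $\{(B_1,j),(A_2,i),(B_2,j),\ldots,(A_{m'},i),(B_{m'},j)\}$ that closes into a $2m'$-cycle through either $e$ or $f$, which forces both $B_1$ and $B_{m'}$ to meet $A$ and $A'$ in prescribed positions. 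Choosing $j$, the interior sets with $\Theta(n^{k^2})$ freedom each, and the boundary sets $B_1,B_{m'}$ with their forced intersections, I get a count of order $n^{(k^2-1)(2m'-1)-c}$ for a constant $c\ge 1$, which is at most $d^{j'-\ep}$ once $\ep<1/(k^2-1)$. Finally, (W4) holds vacuously: every $S\in\ca{P}_{x,y,a,b,m}$ uses only one colour while every edge of $\cC$ uses two alternating colours, so $S$ cannot contain a $\cC$-edge as a subset.

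The main obstacle is the codegree calculation in (W3), where one must carefully track how many vertices of $B_1$ and $B_{m'}$ are pinned by their intersections with $A$ and $A'$ under the bipartite cardinality constraints $|A\cap X|\in\{\binom{k}{2},\binom{k+1}{2}\}$; all remaining steps are direct bipartite analogues of the corresponding calculations in Claim~\ref{clm:Cp_wuvm_trackable}.
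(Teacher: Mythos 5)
Your proposal follows the paper's own route: verify (W1)--(W4) exactly as in Claim~\ref{clm:Cp_wuvm_trackable}, with the bipartite counts from (\ref{eqn:Pxyabm}), and your treatment of (W1) and (W4) matches the paper (indeed your monochromatic-versus-alternating-colours justification of (W4) is the cleaner way to phrase it). Two accounting points in your sketch need tightening, though neither changes the outcome. First, in (W2) the saving is not always $j'k^2$ vertex-factors: if $S'$ contains the edge through $x$ and the edge through $y$, those edges only account for $k^2-1$ free vertices each in $|\ca{P}_{x,y,a,b,m}|$, so the guaranteed saving is only $j'(k^2-1)$ vertices plus one colour factor (this is exactly the bound the paper uses), and your displayed intermediate bound is off by a factor of $n$ in that sub-case; the final inequality still holds since $\ep<1/(k^2-1)$. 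Second, and more substantively, in (W3) the accounting as you describe it -- all interior sets of the path $B_1,A_2,B_2,\ldots,A_{m'},B_{m'}$ with full $\Theta(n^{k^2})$ freedom and savings charged only to $B_1,B_{m'}$ -- does not yield $n^{(k^2-1)(2m'-1)-c}$ with $c\ge 1$: it gives exponent $(2m'-1)k^2-5$, which exceeds $(k^2-1)(2m'-1)$ once $m'\ge 4$, so the step would fail for $\ell\ge 8$. You must also use that each interior $B_t$ ($2\le t\le m'-1$) meets $A_t$ and $A_{t+1}$, pinning two of its vertices and giving only $O(n^{k^2-2})$ choices, while $B_1$ and $B_{m'}$ each have three pinned vertices (intersections with $A$, $A'$, and the adjacent $A_t$); this is the paper's count and gives $O(n^{(k^2-1)(2m'-1)-2})\le d^{j'-\ep}$. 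With those corrections your argument coincides with the paper's proof.
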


For each $2\le m \le \ell$, $(x,y) \in X \times Y$, and $a,b,c \in \{0,1\}$, let $\ca{T}_{x,y,a,b,c,m}$ be the set of sets $S \cup \{(B,j)\} \in E(\cH)^{(m+1)}$ where $S= \{(A_1,i),\ldots,(A_m,i)\} \in \ca{P}_{x,y,a,b,m}$ such that $j \neq i$, $x \in A_1 \setminus B$, $y \in A_m \setminus B$, $|B\cap A_k| \le 1$ for all $k \in [m]$, $|A_2 \cap X| = \binom{k+c}{2}$, and $|B\cap A_1 \cap Y| = |B\cap A_2 \cap X| = 1$.

Let 
\[z := \begin{cases} \frac{m-2}{2}, & m\ge 3, \\ 1, & m=2.\end{cases}\]
Then when counting sets $\{(A_1,i)\bc(A_2,i)\bc\ldots \bc (A_m,i)(B,j)\} \in \ca{T}_{x,y,a,b,c,m}$, we choose $A_1\setminus \{x\}$, $A_m \setminus \{y\}$, $A_2$, $\{A_3,\ldots,A_{m-1}\}$, $B \cap A_1 \cap Y$, $B\cap A_2 \cap X$, $B\setminus (A_1\cup A_2)$, $i \in C$, and $j \in C\setminus \{i\}$.
The factor $z$ accounts for the fact that when $m \ge 3$, the set $A_2$ is distinguished from $A_3,\ldots,A_{m-1}$ by intersection with $B$, and $|A_2\cap X|$ is fixed by $c$.

So we have
\begin{align} \label{eqn:Txyabcm}
    |\ca{T}_{x,y,a,b,c,m}| &= \binom{n}{\binom{k+a}{2}-1}\binom{n}{\binom{k+1-a}{2}}\binom{n}{\binom{k+b}{2}-1}\binom{n}{\binom{k+1-b}{2}}\frac{z}{(m-2)!} \left(2\binom{n}{\binom{k}{2}}\binom{n}{\binom{k+1}{2}}\right)^{m-2} \nonumber \\
    &\quad \cdot\binom{k+1-a}{2}\binom{k+c}{2} 2\binom{n}{\binom{k+1}{2}-1}\binom{n}{\binom{k}{2}-1} \left(\frac{2n}{k^2-1}\right)^2 \pm O(n^{(m+1)k^2-3}) \nonumber \\
    %&= \frac{4\binom{k+1-a}{2}\binom{k+c}{2}zn^{k^2-1}}{(k^2-1)\left(\binom{k+1}{2}-1\right)!\left(\binom{k}{2}-1\right)!}|\ca{P}_{x,y,a,b,m}| \pm O(n^{(m+1)k^2-3}) \nonumber \\
    %&= \frac{4\binom{k+1-a}{2}\binom{k+c}{2}z\binom{k+1}{2}\binom{k}{2}n^{k^2-1}}{(k^2-1)\binom{k+1}{2}!\binom{k}{2}!}\cdot \frac{2^{m-1}\binom{k+a}{2}\binom{k+b}{2}d^m n^{m-1}}{k^{2m}(k^2-1)(m-2)!} \pm O(n^{(m+1)k^2-3}) \nonumber \\
    &= \frac{2^{m-3}\binom{k+c}{2}\binom{k+b}{2}zd^{m+1}n^{m-1}}{k^{2m-2}(m-2)!} \pm O(n^{(m+1)k^2-3}).
\end{align}
Let $w_{x,y,a,b,c,m}'$ be the indicator function defined by 
\[w_{x,y,a,b,c,m}'(S)=\begin{cases}1, & S \in \ca{T}_{x,y,a,b,c,m}, \\ 0, & S \notin \ca{T}_{x,y,a,b,c,m}\end{cases}\]
for all matchings $S \in E(\cH)^{(m+1)}$, extended to all subsets of $E(\cH)$.
Note that $w_{x,y,a,b,c,m}'$ is an $(m+1)$-uniform test function.

\begin{restatable}{claim}{clmtrackc} \label{clm:Kn,n_wxyabm'_trackable}
    For all $(x,y) \in X\times Y$, $2 \le m \le \ell$, and $a,b,c \in \{0,1\}$, $w_{x,y,a,b,c,m}'$ is $(d,\ep,\cC)$-trackable.
\end{restatable}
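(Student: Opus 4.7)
The plan is to verify (W1)--(W4) for the $(m+1)$-uniform test function $w'_{x,y,a,b,c,m}$, using the asymptotic count of $|\ca{T}_{x,y,a,b,c,m}|$ from~(\ref{eqn:Txyabcm}), the relation $d = \Theta(n^{k^2-1})$, and the choice $\ep \in (0, 1/(k^2-1))$. The argument parallels Claim~\ref{clm:Cp_wuvm'_trackable}, adapted to the bipartite setting.

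For (W1), equation~(\ref{eqn:Txyabcm}) gives $w'_{x,y,a,b,c,m}(\cH) = \Theta(d^{m+1} n^{m-1}) = \Theta(d^{m+1+(m-1)/(k^2-1)})$, which exceeds $d^{m+1+\ep}$ for $n$ large since $m \ge 2$ and $\ep < 1/(k^2-1)$. For (W2), I would fix $j' \in [m]$ and $S' \in E(\cH)^{(j')}$ and count extensions to $S \in \ca{T}_{x,y,a,b,c,m}$. If every edge of $S'$ carries the repeated colour $i$, then fixing $S'$ freezes at least $j'(k^2-1)$ vertex coordinates and the colour $i$, saving a factor $\Omega(d^{j'+1/(k^2-1)})$. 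If instead $S'$ contains the unique colour-$j$ edge $(B,j)$, then the constraints $|B \cap A_1 \cap Y| = |B \cap A_2 \cap X| = 1$ reduce the apparent vertex savings by a constant, but the additional saved colour $j$ compensates. In either case $w'_{x,y,a,b,c,m}(\{S : S \supseteq S'\}) \le w'_{x,y,a,b,c,m}(\cH)/d^{j'+\ep}$, since $\ep < 1/(k^2-1)$ provides the requisite slack.

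For (W3), fix distinct $e,f \in E(\cH)$ with $w'_{x,y,a,b,c,m}(\{S : e,f \in S\}) > 0$ and $j' \in [\ell-1]$. If $e = (A,i)$ and $f = (A',i)$ share the same colour $i$ with $A, A'$ disjoint, then an element of $(\cC_e)^{(j')} \cap (\cC_f)^{(j')}$ corresponds to a common alternating path structure whose end blocks each meet both $A$ and $A'$ in one vertex; the counting proceeds exactly as in Claim~\ref{clm:Kn,n_wxyabm_trackable} and yields at most $d^{j'-\ep}$ elements. If instead $e = (A,i)$ and $f = (A',j)$ with $i \neq j$, this case cannot arise: for $S \in (\cC_e)^{(j')} \cap (\cC_f)^{(j')}$ with $|S| = 2m'-1$, the set $S \cup \{e\} \in E(\cC)$ is an alternating $2m'$-cycle containing $m'$ colour-$i$ edges (forcing $m'-1$ in $S$), while $S \cup \{f\}$ similarly forces $m'$ colour-$i$ edges in $S$, a contradiction. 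Finally, (W4) holds because every $S \in \ca{T}_{x,y,a,b,c,m}$ has $m$ edges of one colour and exactly one of another, so no subset of $S$ can be an edge of $\cC$: conflict sets require alternating colours in equal numbers with total size at least $4$, whereas $S$ contains at most one colour-$j$ edge.

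The main obstacle is the degree bookkeeping in (W2) when $S'$ contains the distinguished colour-$j$ edge: the two forced unit intersections $|B \cap A_1 \cap Y| = |B \cap A_2 \cap X| = 1$ depress the vertex savings, and one must verify that the saved second colour exactly recovers the shortfall. The slack $\ep < 1/(k^2-1)$ is what accommodates this compensation and also drives the verifications of (W1) and (W3).
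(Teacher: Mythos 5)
Your proposal is correct and follows essentially the same route as the paper's proof: (W1) from~(\ref{eqn:Txyabcm}), (W2) via the same two-case count depending on whether $S'$ contains the colour-$j$ edge (with the lost vertex compensated by the extra saved colour), (W3) by reducing the same-colour case to Claim~\ref{clm:Kn,n_wxyabm_trackable} and ruling out the mixed-colour case by the alternating-colour parity contradiction, and (W4) vacuously. Your justification of (W4) by counting colour-$j$ edges is in fact a slightly more careful version of the paper's brief size remark, but it is not a different argument.
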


Let $\ca{W}$ be the collection of test functions consisting of all defined $w_v$, $w_{x,y,a,b,m}$, and $w'_{x,y,a,b,c,m}$. By Claims~\ref{clm:Kn,n_wv_trackable}, \ref{clm:Kn,n_wxyabm_trackable} and \ref{clm:Kn,n_wxyabm'_trackable}, every function in $\ca{W}$ is  $(d,\ep,\cC)$-trackable. Using Claims \ref{clm:Kn,n_d-regular} and \ref{clm:Kn,n_Delta2}, apply Theorem \ref{thm:GlockConflictFree} to obtain a matching $\ca{M}$ satisfying (I) and (II).

In addition, because $\delta$ can be chosen to be arbitrarily small compared to $\ep$, for each $v \in X\cup Y$,
\[ d_{G(\ca{M})}(v) = w_v(\ca{M}) \ge (1-d^{-\ep^3})d^{-1}w_v(\cH) > (1-n^{-\delta})n = n-n^{1-\delta},\]
where the first equality follows from the fact that $\ca{M}$ is a matching.
Therefore, we have $\Delta(B(X,Y)-E(G(\ca{M}))) \le n^{1-\delta}$, which gives (III).

Since $\delta$ is arbitrarily small compared to $\ep$,  using \eqref{eqn:Pxyabm} we obtain 
\begin{equation}\label{eq:McapP}
    |\ca{M}^{(m)}\cap \ca{P}_{x,y,a,b,m}|= w_{x,y,a,b,m}(\ca{M}) \le (1+d^{-\ep^3})d^{-m}|\ca{P}_{x,y,a,b,c,m}|\le (1+n^{-m\delta})\frac{2^{m-1}\binom{k+a}{2}\binom{k+b}{2} n^{m-1}}{k^{2m}(k^2-1)(m-2)!},
\end{equation}
 and using (\ref{eqn:Txyabcm}), we obtain 
\begin{align}\label{McapT}
    |\ca{M}\cap \ca{T}_{x,y,a,b,c,m}| = w_{x,y,a,b,c,m}'(\ca{M}) 
    &\ge (1-d^{-\ep^3})d^{-(m+1)}|\ca{T}_{x,y,a,b,c,m}| \nonumber \\
    % &\le (1+n^{-2\delta})\frac{n^{-(m+1)(p-2)}}{(p-2)!^{m+1}}\cdot \frac{n^{(m+1)(p-2)-1}}{(p-2)!^4(p-1)!^{m-3}} \nonumber \\
    &\ge (1-n^{-m\delta})\frac{2^{m-3}\binom{k+c}{2}\binom{k+b}{2}zn^{m-1}}{k^{2(m-1)}(m-2)!}.
\end{align}
For $(x,y) \in X \times Y$, let $S_{x,y,m}$ be the set of cycles $x\bc y_1\bc x_2\bc y_2 \bc\ldots\bc x_m\bc y \bc x$ such that, setting $x_1=x$ and $y_m=y$, there exist distinct (and thus disjoint) $(A_1,i),\ldots,(A_m,i) \in \ca{M}$ such that $x_iy_i \in A_i$ for all $i$.
Let $S_{x,y,m}'$ be the subset of $S_{u,v,m}$ that consists of such cycles with $x_1y_2 \in E(G(\ca{M}))$.
Then (IV) holds for $G(\ca{M})$ if $|S_{x,y,m}\setminus S_{x,y,m}'| \le n^{(m-1)(1-\delta)}$ for all $(x,y) \in X\times Y$.

To obtain such a cycle in $S_{x,y,m}$ from a set in $\ca{M}^{(m)}\cap\bigcup_{(a,b)\in\{0,1\}^2}\ca{P}_{x,y,a,b,m}$, choose $a,b \in \{0,1\}$, choose an ordering for $\{(A_2,i)\bc\ldots\bc(A_{m-1},i)\}$, choose $y_1 \in A_1\cap Y$ and $x_m \in A_m \cap X$, and choose $(x_k,y_k) \in (A_k \cap X)\times (A_k\cap Y)$ for each $2 \le k \le m-1$.
So using \eqref{eq:McapP}, we obtain
\begin{align*}
    |S_{u,v,m}| &\le \sum_{(a,b)\in\{0,1\}^2}|\ca{M}^{(m)}\cap \ca{P}_{x,y,a,b,m}|(m-2)!\binom{k+1-a}{2}\binom{k+1-b}{2}\left(\binom{k}{2}\binom{k+1}{2}\right)^{m-2} \\
    &\le \sum_{a,b\in \{0,1\}}\frac{(1+n^{-m\delta})2^{m-1}\binom{k+a}{2}\binom{k+b}{2} n^{m-1}}{k^{2m}(k^2-1)(m-2)!}\frac{(m-2)!\binom{k+1-a}{2}\binom{k+1-b}{2}(k^2(k^2-1))^{m-2}}{2^{2(m-2)}} \\
    %&= \frac{2^2(1+n^{-m\delta})2^{m-1}(k^2(k^2-1))^mn^{m-1}}{k^{2m}(k^2-1)2^{2m}} \\
    &= \frac{(1+n^{-m\delta})(k^2-1)^{m-1}n^{m-1}}{2^{m-1}}.
    % &= (1+n^{-m\delta})(m-2)!(p-2)^{m-1}n^{m-1}.
\end{align*}

To obtain a cycle in $S_{x,y,m}'$ from a set in $\ca{M}^{(m+1)}\cap \bigcup_{(a,b,c) \in \{0,1\}^3}\ca{T}_{x,y,a,b,c,m}$, choose an ordering for $\{(A_3,i)\bc\ldots\bc(A_{m-1},i)\}$, set $y_1 \in A_1 \cap B$ and $x_2 \in A_2 \cap B$, choose $y_2 \in A_2 \cap Y$, choose $x_m \in A_m \cap X$, and choose $(x_k,y_k) \in (A_k\cap X)\times (A_k \cap Y)$ for all $3\le k \le m-1$.
So for $m \ge 3$, using (\ref{McapT}), we obtain
\begin{align*}
    |S_{x,y,m}'| &\ge \sum_{(a,b,c) \in \{0,1\}^3}\!\!\!\!\!\!\!\!|\ca{M}^{(m+1)}\cap \ca{T}_{x,y,a,b,c,m}|(m-3)!\binom{k+1-c}{2}\binom{k+1-b}{2}\left(\binom{k}{2}\binom{k+1}{2}\right)^{m-3} \\
    &\ge \!\!\!\!\!\sum_{(a,b,c) \in \{0,1\}^3}\!\!\!\!\!\!\!\!\frac{(1-n^{-m\delta})2^{m-3}\binom{k+c}{2}\binom{k+b}{2}(m-2)n^{m-1}}{2k^{2(m-2)}(m-2)!}\frac{(m-3)!\binom{k+1-c}{2}\binom{k+1-b}{2}(k^2(k^2-1))^{m-3}}{2^{2(m-3)}} \\
    %&= \frac{2^3(1-n^{-m\delta}2^{m-3}(k^2(k^2-1))^{m-1}n^{m-1}}{2k^{2(m-1)}2^{2(m-1)}} \\
    &= \frac{(1-n^{-m\delta})(k^2-1)^{m-1}n^{m-1}}{2^{m-1}},
\end{align*}
and for $m=2$, we have $A_m=A_2$, so $c=1-b$, and so
\begin{align*}
    |S_{x,y,2}'| &\ge \sum_{(a,b)\in \{0,1\}^2}|\ca{M}^{(3)}\cap \ca{T}_{x,y,a,b,(1-b),2}| \\
    &= \sum_{(a,b)\in \{0,1\}^2}\frac{(1-n^{-m\delta})2^{m-3}\binom{k+1-b}{2}\binom{k+b}{2}n^{m-1}}{k^{2(m-1)}(m-2)!} \\
  %  &= \frac{2^2(1-n^{-m\delta})2^{2-3}(k^2(k^2-1))n^{m-1}}{k^2(0!)2^2} \\
    &= \frac{(1-n^{-m\delta})(k^2-1)^{m-1}n^{m-1}}{2^{m-1}}.
\end{align*}
Therefore, for all $m$,
\[
    |S_{x,y,m}\setminus S_{x,y,m}'| \le \frac{2n^{-m\delta}(k^2-1)^{m-1}n^{m-1}}{2^{m-1}} = O(n^{m-1-m\delta}) < n^{m-1-m\delta+\delta} = n^{(m-1)(1-\delta)}.
\]

This proves (IV) for $G(\ca{M})$. This completes the proof of Lemma \ref{lem:Cpcolouring}.
\end{proof}

\section{Concluding Remarks}\label{sec:concl}
The problem of characterizing the asymptotics of $r(K_{n,n},C_k,3)$ for $k \ge 6$ remains open.

We wonder whether the upper bound in Theorem \ref{thm:r(Kn,n,Cp,3)} is tight.
\begin{question}
    \label{qu:r(Kn,n,Ck,3)}
For any fixed integer $k \ge 2$ and even number $\ell \ge k^2-k+2$, is it the case that
\[r(K_{n,n},C_{[k^2-k+2,\ell]},3) = \frac{2n}{k^2-1}+o(n)?\]
\end{question}
This prompts the question of whether the upper bound in Theorem \ref{thm:r(Kn,n,Cp,3)} can be interpolated to a bound on $r(K_{n,n},C_k,3)$ that improves Corollary \ref{cor:r(Kn,n,Cp,3)} when $k$ is not of the form $\ell^2-\ell+2$ for some $\ell \in \bb{N}$.
\begin{question} \label{qst:Knn_interpolated}
Is it the case that for any fixed even $k \ge 4$,
\[ r(K_{n,n},C_k,3) \le \frac{2n}{\left(\frac{1+\sqrt{4k-7}}{2}\right)^2-1} + o(n)? \]
\end{question}

It could also be interesting to improve the error term in Theorem~\ref{thm:r(Kn,Cp,3)}.
\begin{question}
    Is $r(K_n,C_k,3) = n/(k-2) + O(1)$? Is $r(K_n,C_k,3) = (n-1)/(k-2)$ for infinitely many $n$?
\end{question}

\textbf{Note added before submission:} During the final stages of preparing this manuscript we became aware that Theorem~\ref{thm:r(Kn,Cp,3)} and Theorem~\ref{thm:r(Kn,n,Cp,3)} had simultaneously and independently been proved by Bal, Bennett, Heath, and Zerbib~\cite{bbhz}. Their proof of Theorem~\ref{thm:r(Kn,n,Cp,3)} differs from ours in that they use randomness to define the hypergraph that is used to apply the conflict-free hypergraph matching method. They also provide some nice generalisations to hypergraphs.

\bibliographystyle{amsplain}
\bibliography{bibliography}

\providecommand{\bysame}{\leavevmode\hbox to3em{\hrulefill}\thinspace}
\providecommand{\MR}{\relax\ifhmode\unskip\space\fi MR }
% \MRhref is called by the amsart/book/proc definition of \MR.
\providecommand{\MRhref}[2]{%
  \href{http://www.ams.org/mathscinet-getitem?mr=#1}{#2}
}
\providecommand{\href}[2]{#2}
\begin{thebibliography}{10}

\bibitem{Axenovich}
Maria Axenovich, Zolt\'{a}n F\"{u}redi, and Dhruv Mubayi, \emph{On generalized {R}amsey theory: the bipartite case}, J. Combin. Theory Ser. B \textbf{79} (2000), no.~1, 66--86.

\bibitem{bbhz}
Deepak Bal, Patrick Bennett, Emily Heath, and Shira Zerbib, \emph{Generalized ramsey numbers of cycles, paths, and hypergraphs},  (2024), E-print arXiv:2405.15904.

\bibitem{bennettcushman2023generalized}
Patrick Bennett, Ryan Cushman, and Andrzej Dudek, \emph{Generalized {R}amsey numbers at the linear and quadratic thresholds},  (2023), E-print arXiv:2309.00182.

\bibitem{bennett2022generalized}
Patrick Bennett, Michelle Delcourt, Lina Li, and Luke Postle, \emph{On generalized {R}amsey numbers in the non-integral regime},  (2022), E-print arXiv:2212.10542.

\bibitem{bennett2023edgecoloring}
Patrick Bennett, Emily Heath, and Shira Zerbib, \emph{Edge-coloring a graph {$G$} so that every copy of a graph {$H$} has an odd color class},  (2023), E-print arXiv:2307.01314.

\bibitem{bw}
Tom Bohman and Lutz Warnke, \emph{Large girth approximate {S}teiner triple systems}, J. Lond. Math. Soc. (2) \textbf{100} (2019), no.~3, 895--913.

\bibitem{Ramsey}
David Conlon, Jacob Fox, and Benny Sudakov, \emph{Recent developments in graph {R}amsey theory.}, Surveys in combinatorics \textbf{424} (2015), no.~2015, 49--118.

\bibitem{dkp}
Michelle Delcourt, Tom Kelly, and Luke Postle, \emph{Clique decompositions in random graphs via refined absorption},  (2024), E-print arXiv:2204.08981.

\bibitem{dkp2}
\bysame, \emph{Thresholds for $(n,q,2)$-{S}teiner systems via refined absorption},  (2024), E-print arXiv:2402.17858.

\bibitem{Delcourt}
Michelle Delcourt and Luke Postle, \emph{Finding an almost perfect matching in a hypergraph avoiding forbidden submatchings},  (2022), E-print arXiv:2204.08981.

\bibitem{dp}
\bysame, \emph{Proof of the high girth existence conjecture via refined absorption},  (2024), E-print arXiv:2402.17856.

\bibitem{dp2}
\bysame, \emph{Refined absorption: A new proof of the existence conjecture},  (2024), E-print arXiv:2402.17855.

\bibitem{Erdos}
Paul Erd\H{o}s, \emph{Problems and results on finite and infinite graphs}, Recent advances in graph theory ({P}roc. {S}econd {C}zechoslovak {S}ympos., {P}rague, 1974), Academia, Prague, 1975, pp.~183--192. (loose errata).

\bibitem{ErdosGyarfas}
Paul Erd\H{o}s and Andr\'{a}s Gy\'{a}rf\'{a}s, \emph{A variant of the classical {R}amsey problem}, Combinatorica \textbf{17} (1997), no.~4, 459--467.

\bibitem{faudree1975path}
R.~J. Faudree and R.~H. Schelp, \emph{Path {R}amsey numbers in multicolorings}, J. Combinatorial Theory Ser. B \textbf{19} (1975), no.~2, 150--160.

\bibitem{fms}
Cristina~G. Fernandes, Guilherme~Oliveira Mota, and Nicol\'{a}s Sanhueza-Matamala, \emph{Separating path systems in complete graphs}, L{ATIN} 2024: theoretical informatics. {P}art {II}, Lecture Notes in Comput. Sci., vol. 14579, Springer, Cham, 2024, pp.~98--113.

\bibitem{furedi2013uniform}
Zolt{\'a}n F{\"u}redi and Mikl{\'o}s Ruszink{\'o}, \emph{Uniform hypergraphs containing no grids}, Advances in Mathematics \textbf{240} (2013), 302--324.

\bibitem{Glock}
Stefan Glock, Felix Joos, Jaehoon Kim, Marcus K\"{u}hn, and Lyuben Lichev, \emph{Conflict-free hypergraph matchings}, Proceedings of the 2023 {A}nnual {ACM}-{SIAM} {S}ymposium on {D}iscrete {A}lgorithms ({SODA}), SIAM, Philadelphia, PA, 2023, pp.~2991--3005.

\bibitem{gjkklp}
Stefan Glock, Felix Joos, Jaehoon Kim, Marcus K\"{u}hn, Lyuben Lichev, and Oleg Pikhurko, \emph{On the $(6,4)$-problem of {B}rown, {E}rd{\H{o}}s and {S}\'{o}s},  (2022), E-print arXiv:2209.14177.

\bibitem{glock2020conjecture}
Stefan Glock, Daniela K{\"u}hn, Allan Lo, and Deryk Osthus, \emph{On a conjecture of {E}rd{\H{o}}s on locally sparse {S}teiner triple systems}, Combinatorica \textbf{40} (2020), no.~3, 363--403.

\bibitem{gehpsz}
Enrique Gomez-Leos, Emily Heath, Alex Parker, Coy Schwieder, and Shira Zerbib, \emph{New bounds on the generalized {R}amsey number {$f$}({$n$},5,8)}, Discrete Math. \textbf{347} (2024), no.~7, Paper No. 114012, 12.

\bibitem{gyarfas1984extremal}
A.~Gy\'{a}rf\'{a}s, C.~C. Rousseau, and R.~H. Schelp, \emph{An extremal problem for paths in bipartite graphs}, J. Graph Theory \textbf{8} (1984), no.~1, 83--95.

\bibitem{joos2022ramsey}
Felix Joos and Dhruv Mubayi, \emph{Ramsey theory constructions from hypergraph matchings},  (2022), E-print arXiv:2208.12563.

\bibitem{keevash2020brown}
Peter Keevash and Jason Long, \emph{The {B}rown-{E}rd{\H{o}}s-{S}\'{o}s conjecture for hypergraphs of large uniformity}, arXiv preprint arXiv:2007.14824 (2020).

\bibitem{ls}
Miao Liu and Chong Shangguan, \emph{Approximate generalized {S}teiner systems and near-optimal constant weight codes},  (2024), E-print arXiv:2401.00733.

\bibitem{spencer1977asymptotic}
Joel Spencer, \emph{Asymptotic lower bounds for {R}amsey functions}, Discrete Math. \textbf{20} (1977/78), no.~1, 69--76.

\end{thebibliography}

\appendix

\section{Details for the bipartite case}\label{app}
Here we provide the proofs of the claims in Theorem~\ref{thm:r(Kn,n,Cp,3)} and Lemma~\ref{lem:bipartitecolouring}. 
\clmcolour*
\begin{proof}
Suppose that $G$ is not $(C_h,3)$-coloured for some (even) $k^2-k+2 \le h \le \ell$.
Let $\cH$ be the hypergraph described in Lemma \ref{lem:bipartitecolouring}(II), and let $C=v_1,v_2,\ldots,v_h,v_1$ be a $2$-coloured $h$-cycle in $G$.
Partition $C$ into maximal-length monochromatic paths $P_1,\ldots,P_t$ such that (setting $P_{t+1}=P_1$) $V(P_i) \cap V(P_{i+1}) \neq \emptyset$ for all $i \in [t]$, and for each $i \in [t]$, let $u_i \in V(P_i) \cap V(P_{i+1})$.
By the definition of $\cH$, each $P_i$ is a subgraph of a monochromatic copy of $K_{h-1}$, and since the $P_i$ have maximal length, the colours of the $P_i$ alternate.
This implies $t \le h \le \ell$.

For each $i \in [t]$, let $F_i$ be the monochromatic copy of $K_{\binom{k}{2},\binom{k+1}{2}}$ that contains $P_i$ as as subgraph.
Then since each $F_i,F_j$ are edge-disjoint for $i\neq j$ and vertex-disjoint for $i\neq j$, $i-j \equiv 0 \pmod{2}$, the $u_i$ are distinct and $V(F_i) \cap V(F_{i+1}) = u_i$ for all $i$, so $V(F_1)\bc V(F_2)\bc \ldots\bc V(F_t)$ is a $2$-coloured cycle of length $t \le \ell$ in $\cH$, contradicting Lemma \ref{lem:bipartitecolouring}(II).
Therefore, $G$ is $(C_h,3)$-coloured for all $k^2-k+2 \le h \le \ell$, i.e., $G$ is ($C_{[k^2-k+2,\ell]}$,3)-coloured.
\end{proof}

\clmlll*
\begin{proof}
Say two events in $\cE$ are \emph{edge-disjoint} if the edges in $L$ associated with each of them are distinct.
For an event $E$ and a set of events $\ca{X}$, let $d_\ca{X}(E)$ be the number of events in $\ca{X}$ that are not edge-disjoint from $E$.
Then by Lemma \ref{lem:asymlovaszlocal}, it suffices to prove that there exist $x\bc y_{(k^2-k+2)/2}\bc \ldots \bc y_{\ell/2} \bc z_2 \bc \ldots \bc z_{\ell/2} \in (0,1)$ such that for all $A \in \ca{A}$, $B_m \in \ca{B}_m$ ($\frac{k^2-k+2}{2}\le m \le \frac{\ell}{2}$), and $C_m \in \ca{C}_m$ ($2 \le m \le \frac{\ell}{2}$), we have
\begin{equation} \label{eqn:ProbBounds_bipartite}
    \bP(A) \le xf(A), \quad \bP(B_m)\le y_mf(B_m), \quad \bP(C_m) \le z_mf(C_m),
\end{equation}
where for all $E \in \ca{E}$,
\[ f(E) := (1-x)^{d_{\ca{A}}(E)}\prod_{m =(k^2-k+2)/2}^{\ell/2}(1-y_{m})^{d_{\ca{B}_{m}}(E)}\prod_{m=2}^{\ell/2}(1-z_{m})^{d_{\ca{C}_{m}}(E)}. \]
Fix $E \in \cE$, and let $S=\{e_1,\ldots,e_t\}$ be the edges in $L$ associated with $E$.

For each $e_i \in S$ and end-vertex $v$ of $e_i$, there are at most $\Delta(L)$ edges in $L$ that share the vertex $v$ with $e_i$, and there are $r$ colours in $P$.
Thus by Lemma \ref{lem:bipartitecolouring}(III),
\begin{equation*}
    d_\ca{A}(E) \le 2t\Delta(L)r\le 2tn^{1-\delta}r = 2tn^{2-\delta-\alpha}.
\end{equation*}
For each $e_i=uv \in S$ and $\frac{k^2-k+2}{2}\le m \le \frac{\ell}{2}$, to choose a cycle $D=u\bc v\bc u_2\bc v_2\bc \ldots\bc u_mv_m,u$ containing $e_i$, we choose $u_i \in N_{L}(v_{i-1})$ and $v_i \in N_{L}(u_i)$ for all $2 \le i \le m$, so there are at most $\Delta(L)^{2m-2}$ choices for $D$.
Thus by Lemma \ref{lem:bipartitecolouring}(III),
\begin{equation*}
    d_{\ca{B}_m}(E) \le t\Delta(L)^{2m-2} \le tn^{(m-2)(1-\delta)}\le tn^{(m-2)-(m-2)\delta}.
\end{equation*}
By Lemma \ref{lem:bipartitecolouring}(IV), for $2\le m \le \frac{\ell}{2}$,
\begin{equation*}
    d_{\ca{C}_m}(E) \le trn^{(m-1)(1-\delta)} \le tn^{1-\alpha}n^{\left(m-1\right)(1-\delta)} =  tn^{m-\left(m-1\right)\delta-\alpha}.
\end{equation*}
Set
\begin{align*}
    x=n^{-2+3\alpha}, && y_m=n^{-(2m-2)+(2m-1)\alpha}, && z_m=n^{-m+(m+1)\alpha}.
\end{align*}
Thus for all $m$, since $(1-a^{-1})^a \ge \frac{1}{4}$ for all $a\ge 2$, $n$ is arbitrarily large, and $\alpha < \frac23$, we have
$$(1-x)^{d_\ca{A}(E)} \ge (1-n^{-2+3\alpha})^{2tn^{2-\delta-\alpha}} = \left(1-n^{-(2-3\alpha)}\right)^{(n^{2-3\alpha})2t(n^{2\alpha-\delta})} \ge \left(\frac{1}{4}\right)^{2tn^{2\alpha-\delta}}.$$
Similarly, we have 
$$(1-y_m)^{d_{\ca{B}_m}(E)} \ge \left(\frac{1}{4}\right)^{tn^{(2m-1)\alpha-(2m-2)\delta}} \hspace{.5cm} \text{ and } \hspace{.5cm} (1-z_m)^{d_{\ca{C}_m}(E)} \ge \left(\frac14\right)^{t\left(n^{(m+2)\alpha-(m-1)\delta}\right)}.$$
In the right-hand side of each of these inequalities, the power of $n$ is negative, since $\alpha< \delta/4$ implies $2\alpha < \delta$, $(2m-1)\alpha < (2m-2)\delta$ for all $\frac{k^2-k+2}{2}\le m \le \frac{\ell}{2}$, and $(m+2)\alpha < (m-1)\delta$ for all $2 \le m \le \frac{\ell}{2}$.
Thus, each term of $f(E)$ is greater than or equal to $\left(\frac14\right)^{cn^b}$ for some constants $b<0<c$.
So
\begin{equation*}
    \lim_{n\to \infty}f(E) = 1.
\end{equation*}
Thus for $A \in \ca{A}$,
\[
    xf(A) = n^{-2+3\alpha}f(A) > n^{-2(1-\alpha)} = \bP(A),
\]
for $B_m \in \ca{B}_m$ ($\frac{k^2-k+2}{2}\le m \le \frac{\ell}{2}$),
\[
y_mf(B_m) = n^{-(2m-2)+(2m-1)\alpha}f(B_m) > n^{-(2m-2)(1-\alpha)} = \bP(B_m),
\]
and for $C_m \in \ca{C}_m$ ($2\le m \le \frac{\ell}{2}$),
\[ z_mf(C_m) = n^{-m+(m+1)\alpha}f(C_m) > n^{-m(1-\alpha)} = \bP(C_m),\]
proving (\ref{eqn:ProbBounds_bipartite}).
This completes the proof of the claim.
\end{proof}

\clmdreg*
\begin{proof}
For any $x_1x_2 \in X^{(2)}$, to pick an edge of $\cH$ containing $x_1x_2$, we choose a colour in $C$, and we either choose $\binom{k+1}{2}-2$ vertices in $X\setminus \{x_1,x_2\}$ and $\binom{k}{2}$ vertices in $Y$, or we choose $\binom{k}{2}-2$ vertices in $X\setminus \{x_1,x_2\}$ and $\binom{k+1}{2}$ vertices in $Y$, giving a total of
$$d_\cH(x_1x_2)= \frac{2n}{k^2-1}\left(\binom{n-2}{\binom{k+1}{2}-2}\binom{n}{\binom{k}{2}}+\binom{n-2}{\binom{k}{2}-2}\binom{n}{\binom{k+1}{2}}\right)= \frac{k^2n^{k^2-1}}{\binom{k}{2}!\binom{k+1}{2}!} - O(n^{k^2-2})$$
edges containing $x_1x_2$.
By symmetry, we also have $d_\cH(y_1y_2) = \frac{k^2n^{k^2-1}}{\binom{k}{2}!\binom{k+1}{2}!} - O(n^{k^2-2})$ for all $y_1y_2 \in Y^{(2)}$.

For any $(x,y) \in X \times Y$, to pick an edge of $\cH$ containing $xy$, we choose a colour in $C$, and we either choose $\binom{k+1}{2}-1$ vertices in $X\setminus\{x\}$ and $\binom{k}{2}-1$ vertices in $Y\setminus \{y\}$, or we choose $\binom{k}{2}-1$ vertices in $X\setminus \{x\}$ and $\binom{k+1}{2}-1$ vertices in $Y\setminus \{y\}$.
Thus we have
$$d_\cH(xy)= \frac{2n}{k^2-1}2\binom{n-1}{\binom{k+1}{2}-1}\binom{n-1}{\binom{k}{2}-1} = \frac{k^2n^{k^2-1}}{\binom{k}{2}!\binom{k+1}{2}!} - O(n^{k^2-2}).$$

Finally, for any $(x,i) \in X\times C$, to pick an edge of $\cH$ containing $(x,i)$, we choose $\binom{k+1}{2}-1$ vertices in $X\setminus\{x\}$ and $\binom{k}{2}$ vertices in $Y$, or we choose $\binom{k}{2}-1$ vertices in $X\setminus \{x\}$ and $\binom{k+1}{2}$ vertices in $Y$.
So
$$d_\cH((x,i)) = \binom{n-1}{\binom{k+1}{2}-1}\binom{n}{\binom{k}{2}}+\binom{n-1}{\binom{k}{2}-1}\binom{n}{\binom{k+1}{2}}= \frac{k^2n^{k^2+1}}{\binom{k}{2}!\binom{k+1}{2}!} - O(n^{k^2-2}).$$

By symmetry, $d_\cH((y,i)) = \frac{k^2n^{k^2+1}}{\binom{k}{2}!\binom{k+1}{2}!} - O(n^{k^2-2})$ for all $(y,i) \in Y\times C$.

So for all $u \in V(\cH)$, we have $d_\cH(u) = d - O(n^{k^2-2})$.
Since $\ep \in (0,\frac{1}{k^2-1})$, we have
\[ n^{k^2-2} = O\left(d^{\frac{k^2-2}{k^2-1}}\right) = O\left(d^{1-\frac{1}{k^2-1}}\right) = o\left(d^{1-\ep}\right), \]
so
\[ d(1-d^{-\ep}) = d-d^{1-\ep}\le \delta(\cH) \le \Delta(\cH) \le d, \]
completing the proof.
\end{proof}

\clmdd*
\begin{proof}
For any $u,v \in V(\cH)$ that share an edge, either $u$ and $v$ are edges in $(X\cup Y)^{(2)}$ that cover $\ge 3$ vertices, one is in $(X\cup Y)^{(2)}$ and covers $2$ vertices and the other is in $(X\cup Y)\times C$ and has a colour, or both are in $(X\cup Y)\times C$ and cover two vertices and a colour.
So to choose an edge containing $u$ and $v$, we choose a total of $k^2-2$ vertices and colours.
Thus since $\ep \in (0,\frac{1}{k^2-1})$,
\[\Delta_2(\cH) \le n^{k^2-2} = (n^{k^2-1})^{\frac{k^2-2}{k^2-1}}\le  d^{1-\frac{1}{k^2-1}}\pm O(n^{k^2-3}) \le d^{1-\ep},\]
proving the claim.
\end{proof}

\clmconflict*

\begin{proof}[Proof of Claim]
(C1) holds trivially since $|S| \le \ell/2 = O(1)$ for all $S \in E(\cC)$.

We will first prove (C2).
If we fix $2\le m \le \ell/2$ and $(A,i) \in E(\cH)$, all edges in $\cC^{(2m)}$ containing $(A,i)$ correspond to cycles of the form $(A,i)\bc(B_1,j)\bc(A_1,i)\bc(B_2,j)\bc\ldots\bc(A_{m-1},i)\bc(B_m,j)$, and there are $O(n)$ choices for $j$, $O(n^{k^2-1})$ choices for each of $B_k\setminus A_{k-1}$ and $A_k\setminus B_k$ for each $1 \le k \le m-1$ (where $A_0 = A$), and $O(n^{k^2-2})$ choices for $B_m\setminus (A_{m-1}\cup A)$, for a total of at most
\[ O(n\cdot n^{2(m-1)(k^2-1)}n^{k^2-2}) = O(n^{(k^2-1)(2m-1)}) = O(d^{2m-1}) \]
choices; this proves (C2).

We will now prove (C3). Let $2 \le m \le \ell/2$.

To show $\Delta_{t}(\cC^{(2m)}) \le d^{2m-t-\ep}$ for all $2\le t < 2m$, let $T = \{(A_1,i)\bc\ldots\bc(A_a,i)\bc(B_1,j)\bc\ldots\bc(B_b,j)\} \subseteq E(\cH)$ with $a+b = t$ and $a\ge b$; we will count the ways to extend $T$ to a cycle $S$ in $E(\cC^{(2m)})$. We will consider the vertices sequentially around the cycle starting at $(A_1,i) \in T$, making choices for the vertices that are not in $T$. First choose the positions in $S$ of $T \setminus \{(A_1,i)\}$. There are $O(1)$ ways to do this. Let $(S_1,c_1),(S_2,c_2), (S_3,c_1)$ be a subpath of this sequential ordering. As $|S_1 \cap S_2| = 1$, given $S_1$ there are $O(n^{k^2 - 1})$ choices for $S_2$. In fact, if $(S_3,c_1) \in T$, then there are $O(n^{k^2 -2})$ choices for $S_2$. In this second case, call $S_2$ restricted. Letting $r$ be the number of restricted set choices, there are $O(n^{(k^2 - 1)(2m - t) - r})$ ways to choose the set components of vertices in $S$, given $T$. Note that if $b=0$, then $r \ge 2$ as $|T| \ge 2$. In this case we must also choose the second colour for the cycle; there are $O(n)$ ways to do this. If $b > 0$, then both colours are determined by $T$ and $r \ge 1$. In either case, as $\ep \in (0,\frac{1}{k^2-1})$, we have at most
$$O(n^{(k^2-1)(2m - t) - 1}) < d^{2m - t - \ep}$$ edges in $\cC^{(2m)}$ containing $T$. Therefore, $\cC$ is a $(d,O(1),\ep)$-bounded conflict system for $\cH$, proving the claim.
\end{proof}

\clmtrack*
\begin{proof}
Note that
\[ w_v(\cH) = \sum_{e \in S_v}d_\cH(e) = nd \pm O\left(n^{k^2-1}\right) = d^{1+\frac{1}{k^2-1}} \pm O\left(n^{k^2-1}\right) > d^{1+\ep},\]
which proves (W1).
Observe that (W2), (W3), and (W4) hold vacuously because $w_v$ is $1$-uniform.
This proves the claim.
\end{proof}

\clmtrackb*
\begin{proof}[Proof of Claim]
Take any $(x,y) \in X\times Y$, $2 \le m \le \ell/2$, and $a,b \in \{0,1\}$.
We will show that $w_{x,y,a,b,m}$ is $(d,\ep,\cC)$-trackable.
By (\ref{eqn:Pxyabm}), since $\ep \in (0,\frac{1}{k^2-1})$,
\[w_{x,y,a,b,m}(\cH) = |\ca{P}_{x,y,a,b,m}| = \Theta\left(d^{m+\frac{m-1}{k^2-1}}\right) > d^{m+\ep},\]
proving (W1).

To see (W2), take any $j' \in [m-1]$ and $S' \in E(\cH)^{(j')}$, and note that when we count the number of $S \in \ca{P}_{x,y,a,b,m}$ that contain $S'$, we choose at least $j'(k^2-1)$ fewer vertices and do not choose a colour, so
\[w_{x,y,a,b,m}(\{S \in E(\cH)^{(m)}:S \supseteq S'\}) = O\left(\frac{|\ca{P}_{x,y,a,b,m}|}{n^{j'(k^2-1)+1}}\right) = O\left(\frac{w_{x,y,a,b,m}(\cH)}{d^{j'+\frac{1}{k^2-1}}}\right) < \frac{w_{x,y,a,b,m}(\cH)}{d^{j'+\ep}}. \]
To see (W3), take any distinct $e,f \in E(\cH)$ with $w_{x,y,a,b,m}(\{S \in E(\cH)^{(m)}:e,f \in S\})>0$.
Since sizes of edges in $\cC$ are even numbers ranging from $4$ to $\ell$, take $2 \le m' \le \ell$ and let $j'=2m'$.
Now $e= (A,i)$ and $f=(A',i)$ for some disjoint $A,A' \in \ca{T}$ and $i \in C$.
So any $S \in \cC_e\cap \cC_f$ is a ``path" $\{(B_1,j)\bc(A_2,i)\bc(B_2,j)\bc \ldots\bc (A_{m'},i)\bc(B_{m'},j)\}$ where two vertices in each of $B_1,B_{m'}$ are fixed by intersection with $e$ and $f$, and $i$ is fixed.
So choosing $j$, the $A_k$ ($2\le k \le m'$), the $B_k$ ($2\le k\le m'-1$), and then $B_1,B_{m'}$, since $\ep < \frac{1}{k^2-1} <\frac{2}{k^2-1}$, we have
$$|\cC_e \cap \cC_f|= O\left(n\cdot n^{(m'-1)k^2}n^{(m'-2)(k^2-2)}n^{2(k^2-3)}\right)= O\left(n^{(2m'-1)(k^2-1)-2}\right)\le d^{2m'-1-\ep} = d^{j'-\ep},$$
proving (W3).

(W4) holds vacuously because $m<2m$.
Therefore, $w_{x,y,a,b,m}$ is $(d,\ep,\cC)$-trackable.
\end{proof}

\clmtrackc*

\begin{proof}
Take any $(x,y) \in X \times Y$, $2 \le m \le \ell$, and $a,b,c \in \{0,1\}$.
We will show that $w_{x,y,a,b,c,m}'$ is $(d,\ep,\cC)$-trackable.
By (\ref{eqn:Txyabcm}),
\[ w_{x,y,a,b,c,m}'(\cH) = |\ca{T}_{x,y,a,b,c,m}| = \Omega\left(d^{m+1+\frac{m-1}{k^2-1}}\right) > d^{m+1+\ep},\]
proving (W1).

To see (W2), take any $j' \in [m]$ and $S' \in E(\cH)^{(j')}$.
If every edge in $S'$ receives the same colour, then when we count the number of $S \in \ca{T}_{x,y,a,b,c,m}$ that contain $S'$, we choose at least $j'(k^2-1)$ fewer vertices and one fewer colour.
If an edge in $S'$ receives a different colour from the rest, then we choose at least $(j'-1)(k^2-1)+k^2-2$ fewer vertices and two fewer colours.
In either case,
$$w_{x,y,a,b,c,m}'(\{S \in E(\cH)^{(m+1)}:S \supseteq S'\}) = O\left(\frac{|\ca{T}_{x,y,a,b,c,m}|}{n^{j'(k^2-1)+1}}\right) < \frac{w_{x,y,a,b,c,m}'(\cH)}{d^{j'+\ep}},$$
so (W2) holds.

To see (W3), take any distinct $e,f \in E(\cH)$ with $w_{x,y,a,b,c,m}'(\{S \in E(\cH)^{(m)}:e,f \in S\})>0$, take $2 \le m' \le \ell$, and let $j'=2m'$.
The case in which $e = (A,i)$ and $f=(A',i)$ for some disjoint $A,A' \in \ca{T}$ and $i \in C$ follows exactly the same as in Claim \ref{clm:Kn,n_wxyabm_trackable}.

We claim that the other case, where $e=(A,i)$ and $f=(B,j)$ for $A,A' \in \ca{T}$ and $i \neq j$, cannot occur.
Suppose otherwise, and take any $S \in \cC_e \cap \cC_f$ with $|S| = 2m'-1$.
Then since colours alternate on edges in $\cC$, the edge $S \cup \{e\} \in E(\cC)$ has exactly $m'$ edges of colour $i$, so $S$ has exactly $m'-1$ edges of colour $i$.
But the edge $S\cup \{f\} \in E(\cC)$ has exactly $m'$ edges of colour $i$, so $S$ has exactly $m'$ edges of colour $i$, a contradiction.
This proves (W3).

(W4) holds vacuously because $m+1<2m$.
Therefore, $w_{x,y,a,b,c,m}'$ is $(d,\ep,\cC)$-trackable.
\end{proof}

\end{document}